\newtheorem{teor}{Theorem}[section]
\newtheorem*{teor*}{Theorem}
\newtheorem{theorem}[teor]{Theorem}
\newtheorem{lema}[teor]{Lemma}
\newtheorem{lemma}[teor]{Lemma}
\theoremstyle{definition}
\newtheorem{defi}[teor]{Definition}
\newtheorem{definition}[teor]{Definition}
\newtheorem{ejem}[teor]{Example}
\theoremstyle{prop}
\newtheorem{prop}[teor]{Proposition}
\theoremstyle{coro}
\newtheorem{coro}[teor]{Corollary}
\newtheorem{remark}[teor]{Remark}
\numberwithin{equation}{section}
\def\kbyk{k \times k}
\def\mbym{m \times m}
\def\mbyn{m \times n}
\def\nbyn{n \times n}
\newcommand {\mat}  [1] {\left[\begin{array}{#1}}
	\newcommand {\rix}      {\end{array}\right]}
\newcommand{\la}{\ensuremath{\lambda}}
\def\absF#1{\lvert\,#1\,\rvert_{{}_{\scriptstyle \cF}}}
\def\scF{{}_{\scriptstyle \cF}}
\def\scG{{}_{\scriptstyle \cG}}
\def\scFo{{}_{\scriptstyle \cF_1}}
\def\scFt{{}_{\scriptstyle \cF_2}}
\def\scFth{{}_{\scriptstyle \cF_3}}
\def\scFf{{}_{\scriptstyle \cF_4}}
\def\scFj{{}_{\scriptstyle \cF_j}}
\def\scFk{{}_{\scriptstyle \cF_k}}
\def\scFell{{}_{\scriptstyle \cF_{\ell}}}
\def\max{\mathop{\rm max}}
\def\rank{\mathop{\rm rank}}
\def\min{\mathop{\rm min}}
\def\rev{\mathop{\rm rev}}
\def\diag{\mathop{\rm diag}}
\def\det{\mathop{\rm det}}
\def\grade{\mathop{\rm grade}}
\def\mf#1{\mathsf{m}_{#1}^{}}
\def\rom#1{{\upshape #1}}
\newcommand{\parens}[1]{\rom{(}#1\rom{)}}
\def\mystrut#1{\rule{0cm}{#1}}  
\newcommand{\und}{\mbox{$\quad\mbox{ and }\quad$}}
\newcommand{\cC}{{\cal C}}
\newcommand{\cF}{{\cal F}}
\newcommand{\cG}{{\cal G}}
\newcommand{\cI}{{\cal I}}
\newcommand{\cJ}{{\cal J}}
\newcommand{\cL}{{\cal L}}
\newcommand{\cM}{{\cal M}}
\newcommand{\cP}{{\cal P}}
\newcommand{\cS}{{\cal S}}
\newcommand{\bF}{\mathbb{F}}
\newcommand{\bN}{\mathbb{N}}
\newcommand{\bQ}{\mathbb{Q}}
\newcommand{\bR}{\mathbb{R}}
\newcommand{\bZ}{\mathbb{Z}}
\newcommand {\basicbox} 		{\begin{tcolorbox}[width=\textwidth,colback={black!1}]}
\newcommand {\overbox}      {\end{tcolorbox}}
\newcommand {\rbasicbox} 		
{\begin{tcolorbox}[width=\textwidth,colback={red!5}]}
\newcommand {\bbasicbox} 		
{\begin{tcolorbox}[width=\textwidth,colback={blue!5}]}
\newcommand {\gbasicbox} 		
{\begin{tcolorbox}[width=\textwidth,colback={green!10}]}
\newcommand {\ybasicbox} 		
{\begin{tcolorbox}[width=\textwidth,colback={yellow!10}]}
\def\mf#1{\mathsf{m}_{#1}^{}}
\def\MT{\mathbf{M}}
\def\PM{\cP\!\cM}
\def\PMg{\PM_{\text{given}}}
\definecolor{lightgrey}{rgb}{.9,.9,.9}
\definecolor{mediumgrey}{rgb}{.6,.6,.6}
\definecolor{darkgrey}{rgb}{.3,.3,.3}
\definecolor{lightgreen}{rgb}{0.7,1.0,0.7}
\definecolor{mediumgreen}{rgb}{0.3,1.0,0.3}
\definecolor{darkgreen}{rgb}{0.0,0.7,0.0}
\definecolor{lightred}{rgb}{1.0,0.6,0.6}
\definecolor{mediumred}{rgb}{1.0,0.3,0.3}
\definecolor{darkred}{rgb}{0.8,0.0,0.0}
\definecolor{lightblue}{rgb}{0.8,0.8,1.0}
\definecolor{mediumblue}{rgb}{0.5,0.5,1.0}
\definecolor{darkblue}{rgb}{0.1,0.1,0.9}
\definecolor{green}{rgb}{0.0,0.7,0.0}
\definecolor{red}{rgb}{1.0,0.0,0.0}
\definecolor{magenta}{rgb}{.75,0,.75}
\definecolor{hotpink}{rgb}{0.9,0,0.5}
\definecolor{cyan}{rgb}{0,0.8, .8}
\def\tcr#1{\textcolor{red}{#1}}
\def\tcb#1{\textcolor{blue}{#1}}
\def\tcg#1{\textcolor{darkgreen}{#1}}
\active\gdef@{\mkern1mu}}
\def\wt{\widetilde}
\def\wh{\widehat}
\def\ov{\overline}
\numberwithin{equation}{section}
\title{Quasi-Triangularization of Matrix Polynomials \\ over Arbitrary Fields
\thanks{This publication is part of the ``Proyecto de I+D+i PID2019-106362GB-I00 financiado
        por MCIN/AEI/10.13039/501100011033''. It has been also funded by
        ``Ministerio de Econom\'ia, Industria y Competitividad (MINECO)'' of Spain
        through grants MTM-2015-65798-P and BES-2013-065688.}}
\author{L.M. Anguas \thanks{Departamento de Matem\'atica e Inform\'atica Aplicadas a las Ingenier\'ias
                            Civil y Naval, Escuela T\'ecnica Superior de Ingenieros de Caminos,
                            Canales y Puertos, Universidad Polit\'ecnica de Madrid, Profesor
                            Aranguren, 3, 28040 Madrid, Spain ({\tt l.anguas@upm.es})}
        \and F. M. Dopico \thanks{Departamento de Matem\'aticas, Universidad Carlos III de Madrid,
		  Avda.\ Universidad 30, 28911 Legan\'es, Spain ({\tt dopico@math.uc3m.es})}
		\and R. Hollister\thanks{Department of Mathematics,
          University at Buffalo, Buffalo NY 14260 USA ({\tt rahollis@buffalo.edu})}
        \and D.S. Mackey\thanks{Department of Mathematics,
          Western Michigan University, Kalamazoo MI 49008-5248 USA ({\tt steve.mackey@wmich.edu})}}
\begin{document}
	
	\maketitle
	
	\begin{abstract}
		In \cite{TasTisZab}, Taslaman, Tisseur, and Zaballa
        show that any regular matrix polynomial $P(\la)$ over an algebraically closed field
        is spectrally equivalent to a triangular matrix polynomial of the same degree.
        When $P(\la)$ is real and regular,
        they also show that there is a real quasi-triangular matrix polynomial of the same degree
        that is spectrally equivalent to $P(\la)$,
        in which the diagonal blocks are of size at most $2 \times 2$.
        This paper generalizes these results to regular matrix polynomials $P(\la)$
        over \emph{arbitrary fields} $\bF$,
        showing that any such $P(\la)$ can be quasi-triangularized
        to a spectrally equivalent matrix polynomial over $\bF$ of the same degree,
        in which the largest diagonal block size
        is bounded by the highest degree appearing among
        all of the $\bF$-irreducible factors in the Smith form for $P(\la)$.
	\end{abstract}
	
	%
	%

{\small
{\bf Key words.} matrix polynomials, triangularization, arbitrary field, majorization, inverse problem, Mobius transformation.
 \\

{\bf AMS subject classification.} 15A18, 15A21, 15A54.}

	\pagestyle{myheadings}
	\thispagestyle{plain}
	
\section{Introduction} \label{sect.intro}

Triangularizations of matrix polynomials
that preserve degree as well as the finite and infinite spectral structure
via unimodular equivalence
are essentially Schur-like forms for matrices
whose entries are polynomials.
These have been achieved
over algebraically closed fields
for regular quadratic matrix polynomials in~\cite{TisZab},
and for regular matrix polynomials of 
arbitrary degree in~\cite{TasTisZab}.
There are also some 
results in~\cite{TasTisZab}
on singular matrix polynomials, but these will not be addressed in this paper.
Also in~\cite{TasTisZab,TisZab},
when the underlying field is $\bR$,
the authors show how to produce quasi-triangularizations
with diagonal blocks of size at most 2$\times$2. 

The goal of this paper is similar:
given a regular matrix polynomial $ P(\la) $,
to show how to construct a regular quasi-triangular matrix polynomial
with the same finite and infinite spectral structure as $P(\la)$,
i.e., is spectrally equivalent to $P(\la)$,
and has the same degree as $P(\la)$.
However, this work is an extension 
of~\cite{TasTisZab}
in that the results presented here 
are for matrix polynomials over an \emph{arbitrary field}.
In order to achieve this generalization, though, 
the possibility of diagonal blocks
of sizes even larger than 2$\times$2 must be allowed.
We 
show that a quasi-triangularization can always be constructed 
in which the sizes of the diagonal blocks
do not exceed $k\times k $,
where $ k $ is the highest degree
among all of the irreducible factors of the invariant polynomials
in the Smith form of the given polynomial matrix $ P(\la) $.
Note that the term \emph{quasi-triangular} is used throughout this paper
to refer to square matrices
that are block upper (or lower) triangular
with square blocks along the main diagonal,
at least one of which has size 2$\times$2 or larger.
A matrix is \emph{$k$-quasi-triangular}
if the diagonal blocks are no larger than $ k \times k $.

Here is a brief overview of 
the paper.
After some preliminary discussion of concepts, notation, and terminology
in Section~\ref{pre},
we begin in Section~\ref{sect.realization-of-finite-spectral-data}
by solving the quasi-triangular \emph{realization problem}
for finite spectral data over an arbitrary field $\bF$.
That is, we take as starting point
a collection of finite spectral data rather than a matrix polynomial,
and show how to construct a strictly regular
$k$-quasi-triangular matrix polynomial over $\bF$
having exactly the given spectral data;
here $k$ is the largest degree
among the irreducible divisors of the given data.
The solution of this inverse problem
is the main technical result of the paper;
all other results depend on and follow from this.
The central idea of the proof is to take the given spectral data,
form the corresponding Smith form,
and then systematically ``un-diagonalize'' in a way that moves
the matrix polynomial toward the desired degree,
while maintaining quasi-triangularity.
This is a proof technique 
used in~\cite{matrixpolynomials},
and then developed by~\cite{TasTisZab};
some antecedents of this technique
can also be found in~\cite{MarquesdeSa}.
We develop it further here,
adapting it to the arbitrary field setting.
Indeed, a number of nontrivial ingredients
go into proving this quasi-triangular realization result by this method ---
majorization plays a role,
as well as a new combinatorial lemma
on the partitioning of integer multisets.
In Section~\ref{sect.eigval-at-infinity}
we extend this realization result
to include elementary divisors at $\infty$
in the given spectral data.
In order to achieve this extension,
we use the well-known tool of M\"obius transformations~\cite{Mobius},
although once again we will need to do a significant amount of work to adapt them
to work smoothly for matrix polynomials over arbitrary fields.
At this point the signature result of the paper ---
the quasi-triangularization of any regular matrix polynomial
over an arbitrary field in Theorem~\ref{thm.QT-regular} ---
now follows easily.
 Finally, in Section~\ref{sect.diagonal-block-size-bounds}
we investigate conditions for describing when exact triangularization is possible
in the arbitrary field setting;
we also display several families of examples,
some that illustrate the sharpness of $k$
as a general upper bound on the size of the diagonal blocks in quasi-triangularizations,
and others that show that this upper bound $k$
can sometimes be a huge overestimate of the diagonal block size
that is actually attainable.

\section{Preliminaries}
\label{pre}

In this paper, we deal with \emph{polynomial matrices}
(also referred to as \emph{matrix polynomials}) over an arbitrary field $ \bF $,
i.e., matrices whose entries are polynomials with coefficients from $ \bF $.
In particular, we will be working only with \emph{regular} matrix polynomials,
that is, square matrix polynomials with determinant
different from the zero polynomial.
A polynomial matrix is \emph{unimodular} if it is regular
and has a (nonzero) constant determinant. 
Throughout the paper the set of natural numbers are denoted by $\bN$,
and includes zero;
$\bN^{+}$ is then the set of positive integers.

The Smith form is a canonical representation of matrix polynomials
under \emph{unimodular equivalence},
i.e., obtained by left and right multiplication
by unimodular matrix polynomials.
This form was first defined for integer matrices \cite{Smith}.
We will use the extension given in \cite{Fro} for matrix polynomials:
\begin{teor}[Smith form] \label{Smithform} \quad \\
	Let $P(\lambda)$ be an $m\times n$ matrix polynomial over an arbitrary field \,$\mathbb{F}$.
	Then there exists $r\in\mathbb{N}$, and unimodular matrix polynomials $U(\lambda)$ and $V(\lambda)$
	such that
	$$U(\lambda)P(\lambda)V(\lambda)={\rm diag}(s_1(\lambda),\dots, s_{\min\{m,n\}}(\lambda))=:S(\lambda)$$
	where $s_i(\lambda)\in\mathbb{F}[\lambda]$, for $i=1,\dots, \min\{m,n\}$, $s_1(\lambda),\dots, s_r(\lambda)$ are monic, $s_{r+1}(\lambda),\dots, s_{\min\{m,n\}}(\lambda)$ are identically-zero,
	and  $s_i(\lambda)$ is a divisor of $s_{i+1}(\lambda)$ for $i=1,\dots, r-1$.
	Moreover, the number $r$ is equal to the rank of $P$,
	and the diagonal entries of the $m\times n$ matrix polynomial $S(\lambda)$
	are uniquely determined by the multiplicative relations
	\begin{equation} \label{eqn.gcd-characterization}
       s_1(\la) s_2(\la) \dotsb s_j(\la)
           \;=\; \gcd\bigl\{ \,\text{all } j \times j \text{ minors of $P(\la)$\,} \bigr\} \,,
                   \;\text{ for }\; j = 1, \dotsc, r \,.
    \end{equation}
	When $P(\la)$ is regular, then $r = m = n$,
	and $S(\la)$ is a nonsingular diagonal matrix.
\end{teor}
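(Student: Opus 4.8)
The plan is to reduce everything to the single fact that $\bF[\la]$ is a Euclidean domain, with the ordinary degree as Euclidean norm, and then run the classical Smith normal form algorithm, taking care that every elementary operation used is implemented by a \emph{unimodular} matrix polynomial. The admissible operations are: (i) interchanging two rows or two columns; (ii) multiplying a row or column by a nonzero constant of $\bF$; and (iii) adding a polynomial multiple of one row (column) to another row (column). Each of these is left (resp.\ right) multiplication by a matrix polynomial whose determinant is a nonzero constant, hence unimodular, and a product of unimodular matrix polynomials is unimodular; so whatever normal form the algorithm produces is unimodularly equivalent to $P(\la)$.

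For \textbf{existence}, I would argue by a double induction: an outer induction on $\min\{m,n\}$ and, within each stage, an induction on the smallest degree $\delta$ occurring among the nonzero entries of the current matrix. If the matrix is identically zero there is nothing to do. Otherwise, using (i), move an entry of minimal degree $\delta$ into the $(1,1)$ position. By division with remainder in $\bF[\la]$, subtract suitable multiples of the first row and first column, via (iii), from all other rows and columns: if all resulting entries of the first row and column are divisible by the $(1,1)$ entry, they can be cleared to zero; if some remainder is nonzero it has degree $<\delta$, so moving it to the $(1,1)$ slot puts us in a strictly smaller-$\delta$ situation and we recurse. Since $\delta$ cannot strictly decrease forever, we eventually obtain a matrix whose first row and column vanish except for the $(1,1)$ entry $g(\la)$. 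One more clean-up is needed: if $g$ fails to divide some entry of the trailing $(m-1)\times(n-1)$ block, add the row containing that entry to the first row and repeat the reduction, which once again produces a strictly smaller $\delta$. When this stabilizes, $g$ divides every entry of the trailing block; apply the outer inductive hypothesis to that block, then rescale each nonzero diagonal entry by a constant, via (ii), to make it monic. This yields $S(\la)=\diag(s_1,\dots,s_{\min\{m,n\}})$ with $s_i\mid s_{i+1}$ and the identically-zero entries collected at the bottom.

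For \textbf{uniqueness} and the determinantal characterization \eqref{eqn.gcd-characterization}, the key point is that for each $1\le j\le\min\{m,n\}$ the monic gcd $d_j(\la)$ of all $j\times j$ minors is a unimodular invariant. Indeed, by the Cauchy--Binet formula every $j\times j$ minor of $U(\la)P(\la)V(\la)$ is an $\bF[\la]$-linear combination of $j\times j$ minors of $P(\la)$, so $d_j(P)\mid d_j(UPV)$; applying the same to $U^{-1}(UPV)V^{-1}=P$ (note $U^{-1},V^{-1}$ are again matrix polynomials) gives the reverse divisibility, and since both gcds are monic they coincide. For the diagonal matrix $S(\la)$, the chain $s_1\mid s_2\mid\dots$ forces the gcd of its $j\times j$ minors to equal $s_1(\la)s_2(\la)\cdots s_j(\la)$: every nonzero $j\times j$ minor of $S$ is a product $s_{i_1}\cdots s_{i_j}$ with $i_1<\dots<i_j$, which is divisible by $s_1\cdots s_j$ because $i_k\ge k$ implies $s_k\mid s_{i_k}$, and $s_1\cdots s_j$ is itself such a minor. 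This proves \eqref{eqn.gcd-characterization}; since the $d_j$ are intrinsic to $P(\la)$, it follows that $s_j=d_j/d_{j-1}$ (with $d_0:=1$) is uniquely determined, independent of the chosen $U,V$.

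Finally, for the \textbf{rank} statement, unimodular matrix polynomials are invertible over the rational function field $\bF(\la)$, so $\rank P=\rank S$ over $\bF(\la)$, which equals the number of nonzero diagonal entries of $S$, namely $r$ (equivalently, $r$ is the largest $j$ for which some $j\times j$ minor of $P$ is nonzero). When $P(\la)$ is regular and $n\times n$, $\det P\not\equiv 0$ gives $\rank P=n$, hence $r=n$, every $s_i$ is a nonzero monic polynomial, and $S(\la)$ is a nonsingular diagonal matrix. I expect the \emph{main obstacle} to be packaging the termination of the reduction cleanly --- in particular organizing the nested inductions on $\min\{m,n\}$ and on the minimal entry degree $\delta$ so that the ``$g$ must divide the whole trailing block'' clean-up step is provably finite --- rather than any single hard computation.
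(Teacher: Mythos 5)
Your proposal is correct: it is the standard existence-and-uniqueness argument for the Smith form over the Euclidean domain $\bF[\la]$ (elementary unimodular operations with a double induction on $\min\{m,n\}$ and the minimal entry degree, plus Cauchy--Binet for the invariance of the determinantal divisors $d_j$). The paper does not prove this theorem itself but cites it as classical (Frobenius), and your argument is precisely the classical proof being invoked, so there is nothing to compare beyond noting that all steps, including the termination of the ``divide the trailing block'' clean-up, are sound.
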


\noindent
The diagonal matrix $ S(\la) $ featuring in this theorem is called the \emph{Smith form} of $ P(\la) $.
The nonzero diagonal entries of $ S(\la) $ are called the \emph{invariant polynomials} of $ P(\la) $,
and their zeros are the \emph{finite eigenvalues} of $ P(\la) $.
An invariant polynomial will be called \emph{trivial}
if it is identically equal to 1 and \emph{nontrivial} otherwise.

A non-constant irreducible polynomial $ \chi(\la) \in \bF[\la] $
that divides some invariant polynomial of $P(\la)$
will be called an \emph{irreducible divisor} of $P(\la)$.
This new concept and terminology is adopted in this work
because of the central role that will be played here by these objects.
Given an invariant polynomial $s_i(\la)$ and an irreducible divisor $\chi(\la)$,
or indeed any irreducible polynomial $\chi(\la)$,
there is a unique natural number $ \alpha_i $ (perhaps zero) such that
\[
  s_i(\la) \,=\, \chi(\la)^{\alpha_i} @@ \wh{s}_i(\la) \,,
\]
with $ \wh{s}_i(\la) $ not divisible by $ \chi(\la) $.
Any factor $ \chi(\la)^{\alpha_i} $ with $\alpha_i > 0$ is traditionally called
an \emph{elementary divisor} \cite{Gant59} of $ P(\la) $.
The number $ \alpha_i $, whether it is zero or nonzero,
is called the \emph{partial multiplicity} of the irreducible $ \chi(\la) $
with respect to the invariant polynomial $ s_i(\la) $,
while the sequence $ (\alpha_1,\alpha_2,\dotsc,\alpha_r) $,
with $0 \le \alpha_1 \le \alpha_2 \le \dotsb \le \alpha_r$,
is called 
the \emph{partial multiplicity sequence} of $P(\la)$ at $\chi(\la)$,
which we will denote by
\begin{equation} \label{eqn.PM}
   \PM (P, \chi) :=\, (\alpha_1,\alpha_2,\dotsc,\alpha_r) \,.
\end{equation}
Note that $\PM (P, \chi)$ may consist of all zeroes,
and $\chi$ is an irreducible divisor of $P$ exactly when some $\alpha_i$ is nonzero.
The partial multiplicity sequence of a degree one irreducible divisor $ \chi(\la) = \la - \la_0 $
is also sometimes referred to as the partial multiplicity sequence of the eigenvalue $ \la_0 $.
An abbreviated notation, $\PM (\chi)$, will also 
be used for the partial multiplicity sequence associated to $\chi(\la)$.
This may be used when the underlying matrix polynomial is understood,
but more commonly it will be employed 
when there is \emph{no} matrix polynomial
in the background at all,
and the sequence being specified (and its association with $\chi$)
is part of a collection of given spectral data
that is yet to be realized.
To emphasize this role 
of being input data for a realization problem,
we will sometimes instead write $\PMg (\chi)$.

In using the notation \eqref{eqn.PM},
it is very useful to \emph{abandon} a standard convention,
i.e., the convention that only \emph{monic} irreducibles should be allowed or considered.
As far as partial multiplicities are concerned,
there is no important difference between an irreducible $\chi(\la)$
and any nonzero scalar multiple $c \chi(\la)$.
In particular, since for any invariant polynomial $s_i(\la)$ we have
\[
  s_i (\la) \,=\, \chi(\la)^{\alpha_i} \,@ \wh{s}_i (\la)
            \,=\, \bigl( c \chi(\la) \bigr)^{\alpha_i} \,@ \frac{\,\wh{s}_i (\la)\,}{c^{\alpha_i}}
            \,=\, \bigl( c \chi(\la) \bigr)^{\alpha_i} \,@ \wt{s}_i (\la)
\]
where $c \chi$ and $\wt{s_i}$ are coprime,
we see that the definition of the partial multiplicity sequence
$(\alpha_1, \dotsc , \alpha_r)$
is unaffected by allowing non-monic irreducibles.
Thus we may consistently say that
\begin{equation} \label{eqn.scalar-multiples}
  \PM (P, c\chi) \;=\; \PM (P, \chi)
\end{equation}
for any nonzero scalar $c \in \bF$,
and thereby simply ignore whether an irreducible is monic or not.
In line with this,
we will say that two $\bF$-irreducible polynomials are
\emph{distinct irreducibles}
if neither is a nonzero scalar multiple of the other.

\begin{remark} \label{rem.implicit-equiv-relation}
\rm
  In effect we are implicitly defining an equivalence relation on the set
  of $\bF$-irreducible polynomials.
  That is, two irreducibles are equivalent
  (with respect to partial multiplicity sequences)
  if one is a nonzero scalar multiple of the other.
  Distinct irreducibles are then just representatives
  of different equivalence classes under this relation.
\end{remark}

\begin{remark}
\rm
  It is important to keep in mind that there are fields $\bF$
  that support the presence of $\bF$-irreducible polynomials
  of arbitrarily high degree.
  A simple example of this is the field $\bQ$ of rational numbers.
  Using the Eisenstein criterion~\cite{artin91},
  it is easy to see that the polynomial $\,x^n + p\,$
  is $\bQ$-irreducible for any prime number $p \ge 2$ and any $n \ge 1$.
\end{remark}


\begin{defi}\label{FiniteStructure}
  The \emph{finite spectral structure} 
  of $P(\la)$
  refers to any of the following equivalent (and easily inter-convertible) collections of data:
  \begin{enumerate}
    \item[\rm (a)] the set of all distinct irreducible divisors of $P(\la)$,
                   each equipped with their partial multiplicity sequence,
    \item[\rm (b)] the multiset of all elementary divisors of $P(\la)$,
                   together with the number $\rank P$,
    \item[\rm (c)] the multiset of all invariant polynomials of $P(\la)$, including the trivial ones.
  \end{enumerate}
  (Recall that a multiset is like a set, but with repetitions allowed,
   i.e., a ``set with multiplicities'' \cite{Knuth}.)
\end{defi}

\begin{remark}
\rm
  Note that the rank of a matrix polynomial is encoded
  in each partial multiplicity sequence by its length.
\end{remark}

Some regular matrix polynomials have structure that
is not completely captured by their finite spectral structure alone.
To get the full story for these matrix polynomials,
it is also necessary to include their ``spectral structure at $\infty$''.
 For this,
some additional terminology is needed.
The \emph{grade} of a matrix polynomial $P(\la)$ is a natural number $ g $
such that
\[
  P(\la) \,=\, P_0 + P_1\la + \dotsb + P_g\la^g \,,
\]
with each $ P_j \in \bF^{n\times n} $.
Note, however, that unlike in the definition of the degree $d$ of $P(\la)$,
there is no requirement here for the leading coefficient $ P_g $ to be nonzero.
Thus we see that $g \ge d$ always holds,
no matter what the choice of grade might be.
And we emphasize that, in contrast with degree, grade is indeed a choice,
although a very common choice is for $g$ to be taken to be equal to $d$.
The \emph{grade $ g $ reversal} of $ P(\la) $ is the matrix polynomial
\[
  (\textstyle{\rev_g} P)(\la) \,:=\; \la^g P(\sfrac{1}{\la}),
\]
and $ P(\la) $ has an \emph{eigenvalue at infinity}
if $ (\rev_g P)(\la) $ has an eigenvalue at zero.
Moreover, the partial multiplicity sequence
of the eigenvalue at infinity for $ P(\la) $
is, by definition,
identical to the partial multiplicity sequence
of the eigenvalue zero for $ (\rev_g P)(\la) $.
We will use $\PM (P, \infty)$ as temporary notation
for the partial multiplicity sequence of $P$ at $\infty$,
so that this definition can be expressed as
\[
  \PM (P, \infty) \,:=\; \PM (\textstyle{\rev_g} P, \la) \,.
\]
However, later in Section~\ref{sect.eigval-at-infinity}
we will have reason to change this notation
to something that is more consistent with the notation in \eqref{eqn.PM},
and also works more smoothly with M\"obius transformations
and their properties.

\begin{defi}\label{InfiniteStructure}
The \emph{infinite spectral structure} 
of $ P(\la) $
refers to the eigenvalue at infinity (if it exists),
together with its partial multiplicity sequence.
\end{defi}

The relationship of grade to degree
(e.g., whether $g=d$ or $g>d$)
is reflected in the infinite spectral structure,
in particular in the first partial multiplicity at $\infty$.
Note that some of the following result
appeared previously in \cite[Lemma 2.17]{spectralequivalence},
but without proof.

\begin{lemma} \label{lem.grade-equal-degree}
  Suppose $P(\la)$ is any $\mbyn$ matrix polynomial over a field $\,\bF$,
  with rank $r$, degree $d$, and grade $g$.
  Let $(\alpha_1, \alpha_2, \dotsc , \alpha_r)$
  with $0 \le \alpha_1 \le \alpha_2 \le \dotsb \le \alpha_r$
  be the partial multiplicity sequence of $P$ at $\infty$.
  Then
  \begin{equation} \label{eqn.alpha1}
    \alpha_1 \,=\, g - d \,,
  \end{equation}
  and hence $\,g = d\,$ if and only if $\,\alpha_1 = 0$.
  \parens{Equivalently, $g > d$\, if and only if \,\,$\alpha_1 > 0$.}
  Furthermore, suppose $\wt{P}(\la)$ is an $\mbyn$ matrix polynomial
  that is entry-wise identical to $P(\la)$,
  but its grade has been chosen to be equal to its degree $d$.
  Then the partial multiplicity sequences at $\infty$ of $P(\la)$ and $\wt{P}(\la)$
  are related by a constant ``shift'' of $g-d$,
  that is,
  \begin{equation} \label{eqn.PM-shift}
    \PM (P, \infty) \;=\; \PM (\wt{P}, \infty) \,+\, (g-d) \cdot (1, 1, \dotsc, 1) \,.
  \end{equation}
\end{lemma}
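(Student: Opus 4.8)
The plan is to work directly with the grade-$g$ reversal and the Smith form, using the gcd-of-minors characterization \eqref{eqn.gcd-characterization} to pin down the lowest partial multiplicity at $\infty$. Write $P(\la) = P_0 + P_1 \la + \dotsb + P_g \la^g$ with grade $g$, so that $(\rev_g P)(\la) = \la^g P(1/\la) = P_g + P_{g-1}\la + \dotsb + P_0 \la^g$. By definition $\PM(P,\infty)$ is the partial multiplicity sequence of $(\rev_g P)(\la)$ at the eigenvalue $0$, i.e., at the irreducible $\chi(\la) = \la$. So $\alpha_1$ is the smallest partial multiplicity of $\la$ among the invariant polynomials of $\rev_g P$, equivalently the order of vanishing of $s_1(\la)$, the first invariant polynomial of $\rev_g P$, at $\la = 0$.

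First I would identify this first invariant polynomial via \eqref{eqn.gcd-characterization} with $j=1$: $s_1(\la) = \gcd\{\text{all entries of }(\rev_g P)(\la)\}$. Now each entry of $(\rev_g P)(\la)$ is $\sum_{k=0}^{g} (P_k)_{ab} \la^{g-k}$, so the power of $\la$ dividing that entry is $g$ minus the largest index $k$ for which $(P_k)_{ab} \ne 0$. Taking the gcd over all entries, the power of $\la$ dividing $s_1(\la)$ is $g - \max\{k : P_k \ne 0\} = g - d$, since by definition of degree $P_d \ne 0$ and $P_k = 0$ for $k > d$. This is exactly $\alpha_1 = g - d$, giving \eqref{eqn.alpha1}. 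The statement that $g = d \iff \alpha_1 = 0$ (equivalently $g > d \iff \alpha_1 > 0$) is then immediate, since $g \ge d$ always holds.

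For \eqref{eqn.PM-shift}, I would relate $(\rev_g P)(\la)$ to $(\rev_d \wt{P})(\la)$: since $P$ and $\wt{P}$ are entry-wise identical, $(\rev_g P)(\la) = \la^{g-d} \cdot (\rev_d \wt{P})(\la)$. Multiplying a matrix polynomial by the scalar $\la^{g-d}$ multiplies every $j \times j$ minor by $\la^{j(g-d)}$, hence multiplies $s_1 \dotsb s_j$ by $\la^{j(g-d)}$ for each $j$; comparing consecutive products shows each invariant polynomial of $(\rev_g P)$ is $\la^{g-d}$ times the corresponding invariant polynomial of $(\rev_d \wt{P})$. Therefore the partial multiplicity of $\la$ in the $i$-th invariant polynomial goes up by exactly $g-d$ for every $i$, which is precisely the claimed shift $\PM(P,\infty) = \PM(\wt{P},\infty) + (g-d)\cdot(1,1,\dotsc,1)$. (As a consistency check, the first entry of $\PM(\wt{P},\infty)$ is $d - d = 0$ by \eqref{eqn.alpha1} applied to $\wt{P}$, so the first entry of $\PM(P,\infty)$ becomes $g-d$, matching \eqref{eqn.alpha1} for $P$.)

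The only mild subtlety — really the one place to be careful rather than a genuine obstacle — is the passage from "$\la^{g-d}$ multiplies all $j \times j$ minors by $\la^{j(g-d)}$" to "$\la^{g-d}$ multiplies each individual invariant polynomial by $\la^{g-d}$." This follows because $s_j(\la)$ is the quotient $\gcd\{j\times j \text{ minors}\} / \gcd\{(j-1)\times(j-1)\text{ minors}\}$, and scaling all $k\times k$ minors of a matrix polynomial by a common monomial $\la^{k c}$ scales that quotient by $\la^{jc}/\la^{(j-1)c} = \la^c$; one should note in passing that scaling by a monomial cannot create new common factors or destroy existing ones, so the gcd structure transforms exactly as stated. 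With that observation in hand the rest is bookkeeping.
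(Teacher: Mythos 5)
Your proof is correct, and it reaches both conclusions by a route that differs from the paper's in its key sub-arguments, even though both hinge on the same factorization $\rev_g P = \la^{g-d}\,\rev_d \wt{P}$. For \eqref{eqn.alpha1} you read off $\alpha_1$ directly as the power of $\la$ in $s_1(\la) = \gcd\{\text{entries of } \rev_g P\}$, which is a clean one-line computation; the paper instead first proves that $\PM(\wt{P},\infty)$ begins with a zero by showing that the number of initial zero partial multiplicities of $\la-\mu_0$ equals $\rank Q(\mu_0)$ and evaluating at $\mu_0=0$, where the constant term of $\rev_d\wt{P}$ is the nonzero matrix $P_d$, and only then deduces $\alpha_1 = g-d$ from the Smith-form relation. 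Your gcd argument is the more elementary of the two, while the paper's rank observation is a reusable fact in its own right. For the shift \eqref{eqn.PM-shift} the paper simply asserts $S(\la) = \la^{g-d}\wt{S}(\la)$ (which implicitly requires noting that $\la^{g-d}\wt{S}(\la)$ is itself a valid Smith form), whereas you justify the same scaling of each invariant polynomial explicitly through the gcd-of-minors characterization \eqref{eqn.gcd-characterization}; this fills in a step the paper leaves to the reader. The only point worth a parenthetical remark in your write-up is that identically zero entries of $\rev_g P$ contribute nothing to the gcd, so the minimum in your computation is taken over the nonzero entries, of which there is at least one since $r \ge 1$.
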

\begin{proof}
  Begin by expressing $P(\la) = P_g \la^g + \dotsb + P_d \la^d + \dotsb + P_1 \la + P_0$,
  so that $\wt{P}(\la) = P_d \la^d + \dotsb + P_1 \la + P_0$,
  with $P_d \ne 0$ and $P_j = 0$ for the $g-d$ leading coefficients
  with $j = d+1, \dotsc, g$.
  Then the reversals of $P$ and $\wt{P}$ are related by
  \begin{equation} \label{eqn.two-reversals}
    \textstyle{\rev_g} P \,=\, \la^g P \bigl( \sfrac{1}{\la} \bigr)
                         \,=\, \la^{g-d} \la^d P \bigl( \sfrac{1}{\la} \bigr)
                         \,=\, \la^{g-d} \bigl(\textstyle{\rev_d} \wt{P} \bigr) \,.
  \end{equation}
  Note that the constant term of $\rev_d \wt{P}$ is the \emph{nonzero} matrix $P_d$.

  Next observe that for any constant $\mu_0 \in \bF$
  and any matrix polynomial $Q(\la)$ over $\bF$,
  the number of initial \emph{zero} partial multiplicities in the sequence $\PM(Q,\chi)$
  with $\chi(\la) = \la - \mu_0$ is equal to $\rank Q(\mu_0)$.
  This can be easily seen from the Smith form
  \begin{equation} \label{eqn.Smith-form}
    U(\la) Q(\la) V(\la) \;=\;  \wh{S}(\la) \,
  \end{equation}
  by evaluating \eqref{eqn.Smith-form} at $\mu_0$,
  then computing ranks to get
  $\rank \bigl( Q(\mu_0) \bigr) \,=\, \rank \bigl( \wh{S}(\mu_0) \bigr)$\,.
  But any diagonal entry of $\wh{S}(\la)$ corresponding
  to a nonzero partial multiplicity of $\la - \mu_0$
  will be zeroed out in $\wh{S}(\mu_0)$,
  so the remaining nonzero diagonal entries of $\wh{S}(\mu_0)$,
  there are $\rank \wh{S}(\mu_0) = \rank Q(\mu_0)$ many of them,
  will count the zero partial multiplicities of $\la - \mu_0$ for $Q$.
  Applying this to $Q(\la) = \rev_d \wt{P}$, we see that
  \begin{align*}
    (\text{no. of zero partial multiplicities for $\wt{P}$ at $\infty$})
    &\;=\; (\text{no.\ of zero partial multiplicities for $\textstyle{\rev_d} \wt{P}$ at $(\la - 0)$}) \\
    &\;=\; \rank \bigl( \textstyle{\rev_d} \wt{P} (0) \bigr)
     \;=\; \rank P_d \,>\, 0 \,,
  \end{align*}
  since $P_d$ is nonzero.
  Thus $\PM (\wt{P}, \infty) := \PM \bigl(\textstyle{\rev_d} \wt{P}, (\la - 0) \bigr) = (0, \dotsc )$.
  Hence in the Smith form $\wt{S}(\la)$ for $\rev_d \wt{P}$,
  i.e., $\wt{S}(\la) = \diag(\wt{s}_1(\la), \wt{s}_2(\la), \dotsc )$,
  we see that $\wt{s}_1(\la)$ must be coprime to $\la$.
  Now from \eqref{eqn.two-reversals} it follows that the Smith form $S(\la)$ for $\rev_g P$
  is related to $\wt{S}(\la)$ by
  \[
    S(\la) \;=\; \la^{g-d} @\wt{S}(\la) \,.
  \]
  Hence we can immmediately conclude that $\alpha_1 = g-d$,
  and that the ``shift'' relation in \eqref{eqn.PM-shift} holds.
\end{proof}

The first major result in this paper will be concerned
with a special class of regular matrix polynomials,
as in the following definition.
\begin{definition} \label{def.strictly-regular}
  Regular matrix polynomials that have \emph{no} infinite spectral structure at all
  will be referred to as \emph{strictly regular}.
\end{definition}
\noindent
For a regular $\nbyn$ matrix polynomial $P(\la)$ of grade $ g $,
being strictly regular is equivalent
to the leading coefficient $P_g$ being \emph{nonsingular}
(and hence necessarily also that $g=d$),
or equivalently to $\deg \bigl( \det P(\la) \bigr)$ being equal to $gn$.

\begin{defi}\label{RegStructure}
  The \emph{complete spectral data} of a matrix polynomial $P(\la)$
  is the combination of the finite and infinite spectral structures.
  Two regular matrix polynomials with the same complete spectral data
  are said to be \emph{spectrally equivalent}.
\end{defi}
\noindent
The notion of spectral equivalence 
was introduced in \cite{spectralequivalence} as a way to compare matrix polynomials,
both regular and singular, even if they have different sizes or degrees.
Although the definition given in \cite{spectralequivalence}
is quite different than Definition~\ref{RegStructure},
it was shown in \cite{spectralequivalence}
that for regular polynomials being spectrally equivalent
is the same as having the same complete spectral data.
Note that in \cite{TasTisZab}, having the same complete spectral data
is termed \emph{strongly equivalent}.

It is important to keep firmly in mind the contrast between
unimodular equivalence and spectral equivalence.
Unimodular equivalence preserves all finite spectral structure,
but carries with it the unfortunate possibility of altering
any infinite spectral structure that might be present.
We will see a concrete illustration of this phenomenon
in Example~\ref{ex.full-reg},
but a more detailed discussion
of the possible effects of unimodular transformations
on infinite spectral structure
can be found in~\cite[Sect.\ 4.3]{spectralequivalence}.
In this paper the ultimate aim is to produce quasi-triangularizations
that are spectrally equivalent to the given matrix polynomial,
and not just unimodularly equivalent.

The following theorem, dubbed the Index Sum Theorem in \cite{spectralequivalence},
appeared for the first time in \cite{VerVDKai} for the complex field,
and in \cite{NevPraag} and \cite{Praag} for the real field.
This theorem describes the fundamental relationship
between the grade, rank, and structural data
(elementary divisors together with minimal indices)
of \emph{any} matrix polynomial,
and was recently extended to arbitrary fields in \cite{spectralequivalence}.
Here we state the result only for regular matrix polynomials over an arbitrary field;
the result in its full generality can be found in~\cite{spectralequivalence}.
 For more on the history of this theorem,
as well as its connection to other fundamental results, see~\cite{VanDoorIndexSum}.


\begin{teor}[Index Sum Theorem for Regular Matrix Polynomials]
    \label{thm.index-sum} \quad \\
	Let $P(\lambda)$ be a regular $\nbyn$ matrix polynomial of degree $d$ and grade $g$
	having the 
	the complete spectral data:
	\begin{itemize}
		\item invariant polynomials $p_j(\lambda)$ of degrees $\delta_j$, for $j=1,\dots, n$,
		\item infinite partial multiplicities $\gamma_1,\dots,\gamma_n$,
	\end{itemize}
where some  
of the degrees or partial multiplicities can be zero.
Then the index sum $\sigma$ satisfies the relation
\begin{equation} \label{ISMeq}
  \sigma \,:=\; \sum_{j=1}^n \delta_j \,+\, \sum_{j=1}^n \gamma_j \;=\; gn \,.
\end{equation}
If $P(\la)$ is strictly regular,
so that all of the $\gamma_i$'s are zero and $g=d$,
then \eqref{ISMeq} simplifies to just the relation
\begin{equation} \label{ISMeq-StrictlyReg}
  \sigma \;=\; \sum_{j=1}^n \delta_j \;=\; dn \,.
\end{equation}
\end{teor}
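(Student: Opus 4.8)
The plan is to reduce \eqref{ISMeq} to two facts about orders of vanishing of determinants. Write $D := \deg\bigl(\det P(\la)\bigr)$; then \eqref{ISMeq} is equivalent to the conjunction of the identities $\sum_{j=1}^n \delta_j = D$ and $\sum_{j=1}^n \gamma_j = gn - D$. The first of these I would get from the Smith form (Theorem~\ref{Smithform}): writing $U(\la)P(\la)V(\la) = \diag\bigl(p_1(\la),\dots,p_n(\la)\bigr)$ with $U(\la),V(\la)$ unimodular, and using that a unimodular polynomial matrix has nonzero constant determinant, taking determinants yields $\det P(\la) = c\,p_1(\la)\cdots p_n(\la)$ for some nonzero $c\in\bF$, so $D = \sum_{j=1}^n\deg p_j(\la) = \sum_{j=1}^n\delta_j$. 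Note along the way that $D \le gn$, since each entry of $P(\la)$ has degree at most the grade $g$ and hence each term in the Leibniz expansion of $\det P(\la)$ has degree at most $gn$.

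For the second identity I would pass to the grade-$g$ reversal $R(\la) := (\rev_g P)(\la) = \la^g P(1/\la)$. Each entry $\la^g P_{ij}(1/\la)$ is an honest polynomial because $\deg P_{ij}\le g$, so $R(\la)$ is an $\nbyn$ matrix polynomial, and it is regular because $\det R(\la) = \la^{gn}\det\bigl(P(1/\la)\bigr)$ is a nonzero polynomial. Writing $\det P(\mu) = c_0 + c_1\mu + \dots + c_D\mu^D$ with $c_D \ne 0$ and substituting $\mu = 1/\la$, one gets $\det R(\la) = \la^{gn-D}\bigl(c_D + c_{D-1}\la + \dots + c_0\la^{D}\bigr)$, whose second factor is nonzero at $\la = 0$; hence the exact power of the irreducible $\la$ dividing $\det R(\la)$ is $gn - D$. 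By definition, the partial multiplicity sequence of $P$ at $\infty$ is the partial multiplicity sequence $(\beta_1,\dots,\beta_n)$ of $R$ at the irreducible $\la$; so writing the Smith form of $R$ as $\diag\bigl(\widehat s_1(\la),\dots,\widehat s_n(\la)\bigr)$ with $\widehat s_j(\la) = \la^{\beta_j}\widehat t_j(\la)$ and $\la\nmid\widehat t_j(\la)$, and using that $\bF[\la]$ is a UFD with $\la$ irreducible, we find that the exact power of $\la$ dividing $\widehat s_1(\la)\cdots\widehat s_n(\la)$, and therefore dividing $\det R(\la)$ (these differ by a nonzero constant), equals $\sum_{j=1}^n\beta_j$. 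Comparing the two counts gives $\sum_{j=1}^n\gamma_j = \sum_{j=1}^n\beta_j = gn - D = gn - \sum_{j=1}^n\delta_j$, which is exactly \eqref{ISMeq}. The strictly regular case \eqref{ISMeq-StrictlyReg} is the specialization in which $g = d$ and, by Definition~\ref{def.strictly-regular} (see also Lemma~\ref{lem.grade-equal-degree}), all $\gamma_j$ vanish.

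The argument is essentially classical and I do not expect a serious obstacle; the care needed is in the bookkeeping. One must verify that $\rev_g P$ is still a (regular) matrix polynomial --- which is where $\deg P_{ij}\le g$ enters --- and that taking the reversal turns the top-degree term of $\det P$ into the bottom-degree term of $\det R$, so that the order of vanishing of $\det R$ at $0$ is precisely $gn - \deg\det P$. The single structural ingredient, and the reason the proof goes through verbatim over an arbitrary field, is the principle that the sum of the partial multiplicities of a regular matrix polynomial at a linear irreducible $\la - \mu_0$ equals the order of vanishing of its determinant at $\mu_0$; this is immediate from Theorem~\ref{Smithform} once one observes that the unimodular factors contribute only a nonzero constant to the determinant.
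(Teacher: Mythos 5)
Your proof is correct. Note first that the paper does not actually prove Theorem~\ref{thm.index-sum}: it states the result and cites the literature (\cite{VerVDKai}, \cite{NevPraag}, \cite{Praag}, and for arbitrary fields \cite{spectralequivalence}, where the full version for possibly singular polynomials, involving minimal indices, is established). So there is no in-paper argument to compare against; what you have supplied is a self-contained proof of precisely the regular special case that the paper uses. Your route --- splitting \eqref{ISMeq} into $\sum_j \delta_j = \deg\det P$ (immediate from the Smith form, since unimodular factors contribute only a unit) and $\sum_j \gamma_j = gn - \deg\det P$ (by computing the exact order of vanishing of $\det(\rev_g P)$ at $\la = 0$ and identifying it with the sum of the partial multiplicities of $\rev_g P$ at the irreducible $\la$) --- is sound at every step, works over an arbitrary field, and correctly uses the paper's definition of the infinite partial multiplicities as $\PM(\rev_g P, \la)$. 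The bookkeeping with the reversal of $\det P(\mu) = c_0 + \dots + c_D\mu^D$ is right: $\det(\rev_g P)(\la) = \la^{gn-D}(c_D + \dots + c_0\la^D)$ with the second factor nonvanishing at $0$, and $gn - D \ge 0$ by the Leibniz bound. The one thing your determinant argument cannot reach is the singular case (where $\det P \equiv 0$ and the left and right minimal indices enter the index sum); that is why the reference \cite{spectralequivalence} needs heavier machinery, but for the regular statement as given here your proof is complete.
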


The index sum theorem 
plays a key role in the following theorem,
which is a very special case of a much more general result in \cite{prescribed}.
The corollary immediately following it will be used at a key moment 
in the proof of our first main result, Theorem~\ref{thm.QTR-strictly-regular}.

\begin{teor}[Fundamental Realization Theorem for Strictly Regular Matrix Polynomials]
	\label{Teorprescribed} \quad \\
Let $\,\bF$ be an arbitrary field,
and $n, d \in \bN^+$ be given positive integers.
Consider a collection of $n$ monic polynomials $p_1(\la),\dots, p_n(\la)$
        with coefficients in $\,\mathbb{F}$
        and respective degrees $\delta_1,\dots,\delta_n$,
        such that $p_j(\lambda)$ divides $p_{j+1}(\lambda)$ for $j=1,\dots,n-1$.
Then there exists a strictly regular $n \times n$ matrix polynomial $P(\lambda)$
over $\,\mathbb{F}$ with degree $d$
and invariant polynomials $p_1(\lambda),\dots, p_n(\lambda)$,
if and only if \eqref{ISMeq-StrictlyReg} holds.
\end{teor}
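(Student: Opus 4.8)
\emph{Proof plan.}
The ``only if'' direction is immediate: a strictly regular $\nbyn$ matrix polynomial of degree $d$ has no infinite elementary divisors and grade $g=d$, so if its invariant polynomials are $p_1(\la),\dots,p_n(\la)$ with degrees $\delta_1,\dots,\delta_n$, then \eqref{ISMeq-StrictlyReg} of \thmref{thm.index-sum} gives $\sum_{j=1}^n\delta_j=dn$.

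For the ``if'' direction the plan is to build such a $P(\la)$ explicitly. Recall that for an $\nbyn$ polynomial of grade $d$, being strictly regular is the same as having $\deg\det P(\la)=dn$; since a $P(\la)$ with invariant polynomials $p_1,\dots,p_n$ automatically has $\deg\det P(\la)=\sum_j\delta_j=dn$, it suffices to produce a matrix polynomial with invariant polynomials $p_1,\dots,p_n$ all of whose entries have degree at most $d$. First I would normalize the leading invariant polynomial: if $\delta_1>0$, look for $P(\la)=p_1(\la)\,\widetilde{P}(\la)$ where $\widetilde{P}(\la)$ is $\nbyn$ of degree $d-\delta_1$ with invariant polynomials $1,\,p_2/p_1,\,\dots,\,p_n/p_1$ (monic, with the correct divisibility, and degrees summing to $(d-\delta_1)n$); multiplying a matrix polynomial by the scalar $p_1(\la)$ multiplies every invariant polynomial by $p_1(\la)$ and raises every entry degree by $\delta_1$, so this reduces to the case $p_1(\la)=1$.

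In the case $p_1(\la)=1$, the extreme subcase $p_1=\dots=p_{n-1}=1$, $p_n=q$ with $\deg q=dn$ is handled directly: expanding $q(\la)$ in ``base $\la^d$'' by repeated division yields $c_0(\la),\dots,c_{n-1}(\la)$ with $\deg c_i<d$ for $i<n-1$ and $c_{n-1}(\la)$ monic of degree $d$, and then the $\nbyn$ matrix polynomial with $\la^d$ in its first $n-1$ diagonal positions, $-1$ on the first superdiagonal, and last row $(\pm c_0(\la),\dots,\pm c_{n-1}(\la))$ for suitable signs has all entries of degree $\le d$, leading coefficient $\diag(1,\dots,1,\pm1)$ (nonsingular), a constant $1\times1$ minor forcing its first $n-1$ invariant polynomials to be trivial, and determinant $q(\la)$. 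The general subcase, in which several $p_j$ are nontrivial, requires assembling such companion-type blocks so that the result still has all entries of degree $\le d$, determinant of degree $dn$, and invariant polynomials equal to $p_1,\dots,p_n$ in order --- equivalently, realizing the $dn\times dn$ rational canonical form with invariant factors $p_1,\dots,p_n$ as the first companion matrix of a monic $\nbyn$ matrix polynomial of degree $d$. This assembly is the real obstacle, and over an \emph{arbitrary} field it cannot be carried out eigenvalue-by-eigenvalue or by splitting the $p_j$ into linear factors; the degree bookkeeping must be run directly on invariant factors via division with remainder, which is exactly where combinatorial/majorization arguments are needed. It is the regular, empty-infinite-part, no-minimal-indices specialization of the general prescribed-eigenstructure realization theorem of \cite{prescribed}, so in practice this step should simply be invoked; the self-contained route above would have to reprove that result's core lemma.
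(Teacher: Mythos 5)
Your proposal matches the paper's treatment: the paper does not prove Theorem~\ref{Teorprescribed} at all, but states it as a very special case of the general prescribed-eigenstructure realization theorem of \cite{prescribed}, which is exactly the citation you ultimately fall back on, and your ``only if'' direction via \eqref{ISMeq-StrictlyReg} is the intended one. Your exploratory companion-block construction is a reasonable sketch of how the cited result works in the single-nontrivial-invariant case (though note that triviality of the first $n-1$ invariant polynomials needs a unimodular $(n-1)\times(n-1)$ minor, not merely a constant $1\times 1$ minor), but since you correctly identify that the general assembly is the content of \cite{prescribed} and should simply be invoked, nothing further is required.
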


\begin{coro}\label{cor.prescribed}
  Let $\,\bF$ be an arbitrary field,
  and $Q(\la)$ a regular $\nbyn$ polynomial matrix over $\,\bF$.
  Suppose that $\sigma := \deg \bigl(\det Q(\la) \bigr)$ is divisible by $n$,
  in particular that $\sigma = dn$.
  Then there is a strictly regular $\nbyn$ matrix polynomial $P(\la)$ over $\,\bF$
  of degree $d$
  that is unimodularly equivalent to $Q(\la)$.
\end{coro}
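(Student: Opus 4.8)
The plan is to read the invariant polynomials of $Q(\la)$ off its Smith form and then feed them into the Fundamental Realization Theorem to manufacture $P(\la)$ with exactly those invariant polynomials and the desired degree. Concretely, I would first apply \thmref{Smithform} to the regular $\nbyn$ polynomial matrix $Q(\la)$, producing unimodular $U(\la),V(\la)$ with $U(\la)Q(\la)V(\la)=S(\la)=\diag\bigl(p_1(\la),\dots,p_n(\la)\bigr)$, where the $p_j(\la)$ are monic, $p_j(\la)$ divides $p_{j+1}(\la)$, and (since $Q$ is regular) none of them is identically zero. Set $\delta_j:=\deg p_j(\la)\ge 0$.

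Next I would verify that $\sum_{j=1}^n\delta_j=dn$, i.e., that the index-sum condition \eqref{ISMeq-StrictlyReg} holds for the chain $p_1(\la),\dots,p_n(\la)$. This is just degree bookkeeping: since $U(\la)$ and $V(\la)$ are unimodular, $\det U(\la)$ and $\det V(\la)$ are nonzero constants, so $\det S(\la)$ is a nonzero scalar multiple of $\det Q(\la)$; hence $\sum_{j=1}^n\delta_j=\deg\bigl(\det S(\la)\bigr)=\deg\bigl(\det Q(\la)\bigr)=\sigma=dn$, which is exactly \eqref{ISMeq-StrictlyReg}. With this in hand I would invoke \thmref{Teorprescribed} on the collection $p_1(\la),\dots,p_n(\la)$: it yields a strictly regular $\nbyn$ matrix polynomial $P(\la)$ over $\bF$ of degree $d$ whose invariant polynomials are precisely $p_1(\la),\dots,p_n(\la)$.

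To finish, I would observe that $P(\la)$ and $Q(\la)$ now have the same invariant polynomials, hence (by the uniqueness clause of \thmref{Smithform}) the same Smith form $S(\la)$; since unimodular equivalence is an equivalence relation and the Smith form is a complete invariant for it, $P(\la)$ is unimodularly equivalent to $S(\la)$, which in turn is unimodularly equivalent to $Q(\la)$, so $P(\la)$ is unimodularly equivalent to $Q(\la)$, completing the proof. There is no genuinely hard step here, as all the real content is packaged inside \thmref{Teorprescribed}; the only place requiring a moment of care is the degree computation in the second paragraph, turning the divisibility hypothesis $n\mid\deg\bigl(\det Q(\la)\bigr)$ into the form \eqref{ISMeq-StrictlyReg} needed to apply the realization theorem, which rests on the fact that unimodular factors contribute only a nonzero constant to the determinant.
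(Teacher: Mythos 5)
Your proposal is correct and follows exactly the same route as the paper's proof: extract the invariant polynomials from the Smith form of $Q(\la)$, check that $\sum_j \delta_j = \sigma = dn$ so that \eqref{ISMeq-StrictlyReg} holds, apply Theorem~\ref{Teorprescribed} to get a strictly regular degree-$d$ realization $P(\la)$ with the same invariant polynomials, and conclude unimodular equivalence from equality of Smith forms. Your degree bookkeeping via the determinant of the Smith form is just a slightly more explicit version of the step the paper states without elaboration.
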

\begin{proof}
  Let $q_1(\la),\dots, q_n(\la)$ be the invariant polynomials of $Q(\la)$,
  with degrees $\delta_1,\dots,\delta_n$, respectively.
  Then from the Smith form of $Q(\la)$
  we know that $\sum_{j=1}^n \delta_j = \sigma = dn$,
  i.e., that \eqref{ISMeq-StrictlyReg} holds.
  Thus by Theorem~\ref{Teorprescribed}
  there exists a strictly regular $n \times n$ matrix polynomial $P(\la)$
  over $\,\mathbb{F}$ with degree $d$ and the same invariant polynomials as $Q(\la)$.
  Now since $P(\la)$ and $Q(\la)$ have the same Smith form,
  they must be unimodularly equivalent.
\end{proof}

An important concept, 
used in~\cite{prescribed} and~\cite{TasTisZab}
for handling regular matrix polynomials that have nontrivial infinite spectral structure,
is that of M\"{o}bius transformations of matrix polynomials.
These transformations and their properties were studied in \cite{Mobius},
and will be important for the results in Section~\ref{sect.eigval-at-infinity}.

\begin{defi}[M\"obius transformations of matrix polynomials] \label{Mobius} \quad \\
  Let $P(\la)$ be a matrix polynomial of grade $ g $ over the field $ \bF $,
  and suppose 
  $A = \bigl[ \begin{smallmatrix} a & b \\ c & d \end{smallmatrix} \bigr] \in \bF^{2\times 2}$
  is nonsingular.
  The matrix polynomial
  \begin{equation} \label{eqn.Mobius-defn}
    \MT_A(P)(\la) \,:=\; (c\la + d)^g P\left(\frac{a\la + b}{c\la + d}\right)
  \end{equation}
  is the \emph{M\"{o}bius transformation} of $ P(\la) $ with respect to $ A $.
\end{defi}

\begin{ejem} \label{ex.reversal-as-Mobius}
  Note that the grade $g$ reversal operation 
  is an example of a M\"obius transformation,
  specifically $\,\rev_g P = \MT_R(P)\,$ for
  $\,R = \bigl[ \begin{smallmatrix} 0 & 1 \\ 1 & 0 \end{smallmatrix} \bigr]$.
\end{ejem}

\noindent
 For conceptual clarity,
it is important to keep in mind that for any fixed field $\bF$ and grade $g$,
the formula in~\eqref{eqn.Mobius-defn}
actually defines a whole \emph{family} of transformations,
one for each matrix size $\mbyn$.
Once one has fixed the underlying field $\bF$, grade $g$,
input matrix size $\mbyn$, and matrix $A$,
then it is known~\cite{Mobius}
that the M\"obius transformation $\MT_A$ defines a \emph{bijection}
on the set of all $\mbyn$, grade $g$ matrix polynomials over $\bF$.
However, even by staying within the confines of one of these bijections,
the transformation $\MT_A(P)$ does \emph{not} in general preserve the degree of the input $P$;
degree may increase, decrease, or remain unchanged~\cite{Mobius}.
This lack of degree preservation happens even for the simplest size matrix polynomials,
i.e., for scalar ($1 \times 1$) polynomials.
However, for scalar polynomials the following lemma describes
one simple scenario, important for this paper,
where degree preservation by M\"obius transformations is guaranteed.
The proof of this result will be postponed
until Section~\ref{subsect.Mobius-and-irreducible divisors},
where a number of results about the properties of M\"obius transformations
and their effect on partial multiplicity sequences will be developed.

\begin{lemma} \label{lem.degree-preservation-without-proof}
  Suppose $\bF$ is an arbitrary field,
  and $\chi(\la)$ is any $\bF$-irreducible scalar polynomial with $\deg \chi \ge 2$.
  Let $\MT_A$ be the M\"obius transformation
  associated with any
  $A = \bigl[ \begin{smallmatrix} a & b \\ c & d \end{smallmatrix} \bigr] \in GL(2, \bF)$.
  Then with $\grade(\chi)$ taken to be equal to $\deg(\chi)$,
  the transformation $\MT_A$ preserves both the degree and the $\bF$-irreducibility of $\chi$.
  That is, $\MT_A (\chi)$ is $\bF$-irreducible,
  and $\,\deg \bigl(\MT_A (\chi) \bigr) = \deg(\chi)$.
\end{lemma}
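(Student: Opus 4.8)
The plan is to work with the homogenization/dehomogenization viewpoint on M\"obius transformations of scalar polynomials, treating $\chi(\la)$ as a binary form. Writing $n = \deg\chi = \grade\chi$ and $\widehat\chi(x,y) := y^n\,\chi(x/y)$ for the associated homogeneous binary form of degree $n$, the M\"obius transformation acts by the linear substitution $(x,y)\mapsto (ax+by,\,cx+dy)$; that is, $\MT_A(\chi)(\la) = \widehat\chi(a\la+b,\,c\la+d)$, which is manifestly a polynomial in $\la$ of degree at most $n$. First I would establish that this homogenized action is compatible with multiplication: if $\chi = \chi_1\chi_2$ as a product of homogeneous forms of degrees $n_1,n_2$ with $n_1+n_2 = n$, then $\MT_A$ (taken with the matching grades $n_1, n_2, n$) sends this to a product of the transformed forms. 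This multiplicativity, together with the fact that $\MT_A$ is a bijection on binary forms of a fixed degree with inverse $\MT_{A^{-1}}$ (as recorded in~\cite{Mobius} and in the excerpt), is the engine of the whole argument.

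The next step is the degree count. Since $\grade\chi = \deg\chi = n$, the leading coefficient of $\chi$ is nonzero, so $\widehat\chi(x,y)$ genuinely has degree $n$ in $x$ and in particular is not divisible by $y$; equivalently $\widehat\chi(1,0)\neq 0$. Substituting, the coefficient of $\la^n$ in $\MT_A(\chi)(\la)$ equals $\widehat\chi(a,c)$ — the value of the form at the first column of $A$ — while the constant term equals $\widehat\chi(b,d)$. So the potential loss of degree occurs exactly when $\widehat\chi(a,c) = 0$, i.e.\ when the point $[a:c]\in\mathbb P^1(\bF)$ is a root of the form $\widehat\chi$. But a root of $\widehat\chi$ in $\mathbb P^1(\bF)$ corresponds to an $\bF$-linear factor of $\chi$ (either $\la - a/c$ if $c\neq 0$, or the ``point at infinity'' contributing a drop in degree, i.e.\ a factor of $y$, which we already excluded); since $\chi$ is $\bF$-irreducible of degree $\ge 2$, it has no $\bF$-linear factor, hence $\widehat\chi$ has no root in $\mathbb P^1(\bF)$, hence $\widehat\chi(a,c)\neq 0$ for every choice of the (nonzero) column $\begin{bmatrix} a\\ c\end{bmatrix}$ of an invertible $A$. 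This gives $\deg\bigl(\MT_A(\chi)\bigr) = n = \deg\chi$.

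Finally, for irreducibility: suppose $\MT_A(\chi) = f\cdot h$ were a nontrivial factorization in $\bF[\la]$ with $\deg f, \deg h \ge 1$ and $\deg f + \deg h = n$ (using the degree count just obtained to pin down the total degree and rule out ``hidden'' degree at infinity). Homogenize this equation to degree $n$ and apply the inverse transformation $\MT_{A^{-1}}$, using multiplicativity with the matching grades $\deg f$, $\deg h$, $n$: this recovers $\chi = \MT_{A^{-1}}(f)\cdot\MT_{A^{-1}}(h)$ up to the usual bookkeeping, and since each transformed factor has degree at least $\deg f \ge 1$ and at most $\deg f$ (one must check the factors can't pick up degree from infinity, which is handled by the same root-counting as above applied to $f$ and $h$, together with the fact that the total degree is forced to be $n$), this would be a nontrivial factorization of $\chi$ — contradicting $\bF$-irreducibility. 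Hence $\MT_A(\chi)$ is $\bF$-irreducible.

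The main obstacle I anticipate is not any single computation but rather the careful bookkeeping of grades versus degrees throughout the multiplicativity argument: M\"obius transformations are only bijective and multiplicative when the grades are chosen to add up correctly, and the transformation does not preserve degree in general, so one must be vigilant that the grades assigned to $f$ and $h$ (namely their degrees) are exactly the ones for which the product formula and the inverse $\MT_{A^{-1}}$ behave as expected. Getting this grade arithmetic exactly right — and confirming that no factor secretly carries degree ``lost to infinity'' — is where the real care is needed; once that is pinned down, the irreducibility follows formally from the bijectivity of $\MT_A$ on forms of each fixed degree.
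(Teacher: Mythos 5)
Your proof is correct, but it takes a genuinely different route from the paper's. For the degree count, the paper passes to the algebraic closure $\ov{\bF}$, factors $\chi$ completely into linear factors (all of whose roots lie in $\ov{\bF}\setminus\bF$ by irreducibility), and applies the product property of M\"obius transformations to see that each transformed linear factor $(a-cr_j)\la+(b-dr_j)$ keeps degree one because $r_j=a/c\in\bF$ is impossible. You instead stay entirely over $\bF$: you read off the leading coefficient of $\MT_A(\chi)$ as the value $\wh{\chi}(a,c)$ of the homogenized binary form at the (nonzero) first column of $A$, and observe that its vanishing would force either a root of $\chi$ in $\bF$ (when $c\ne 0$) or a vanishing leading coefficient of $\chi$ (when $c=0$), both excluded. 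Your computation is more elementary and self-contained, and it also localizes exactly \emph{where} degree loss would occur. For irreducibility, the paper simply cites Corollary~3.24 of \cite{Mobius}, whereas you prove it from scratch: a nontrivial factorization $\MT_A(\chi)=fh$ is pulled back through $\MT_{A^{-1}}$ using the product property with grades equal to degrees, and since $\MT_{A^{-1}}$ can only lower degrees while the product must recover the full degree $n=\deg\chi$, each pulled-back factor retains its (positive) degree, contradicting irreducibility of $\chi$. Your grade bookkeeping here is handled correctly -- the key point, which you identify, is that the total degree being forced to equal $n$ pins down the degree of each factor. The trade-off is length versus self-containment: the paper's argument is shorter because it outsources irreducibility, while yours gives a complete, closure-free proof of both claims from the single product-property lemma.
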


\section{Quasi-triangular realization of finite spectral data}
  \label{sect.realization-of-finite-spectral-data}

Our first result is the construction of a quasi-triangular realization
of a given list of finite spectral data,
with a choice of degree that is compatible with the index sum theorem.

 \begin{theorem}[Quasi-Triangular Realization: Strictly Regular Case]
   \label{thm.QTR-strictly-regular} \quad \\
 	Suppose a list of $m$ nonconstant monic polynomials $s_1(\lambda),\dots, s_m(\lambda)$
 	over an arbitrary field $\,\bF$ is given,
 	satisfying the divisibility chain condition
 	$s_1(\la) \,\vert\, s_2(\la) \,\vert \,\dotsb\, \vert\, s_m(\la)$.
 	Let $\sigma := \sum_{i=1}^m \deg \bigl( s_i(\lambda) \bigr)$,
 	and define $k$ to be the maximum degree among all of the $\bF$-irreducible factors
 	of the polynomials $s_i(\lambda)$ for $i = 1,\dotsc, m$.
 	Then for any choice of nonzero $d, n \in \bN$
 	such that $n \ge m$ and $dn = \sigma$,
 	there exists an $n\times n$, degree $d$, strictly regular matrix polynomial $Q(\lambda)$ over $\bF$
 	that is $k$-quasi-triangular,
 	and has exactly the given polynomials $s_1(\lambda),\dots, s_m(\lambda)$
 	as its nontrivial invariant polynomials,
 	together with $n-m$ trivial invariant polynomials. 
 	In addition, $Q(\lambda)$ can always be chosen so that the degree
 	of every entry in any off-diagonal block of $Q(\lambda)$
 	is strictly less than $d$.
 \end{theorem}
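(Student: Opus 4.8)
The plan is to start from the Smith form $S(\la) = \diag(1,\dots,1,s_1(\la),\dots,s_m(\la))$ — which is the "most diagonal" strictly regular realization of the given invariant polynomials, but has the wrong degree (the bottom invariant polynomial $s_m$ carries essentially all of $\sigma$ in a single entry) — and then to perform a sequence of unimodular-equivalence moves that redistribute polynomial degree off the diagonal and into off-diagonal positions, all the while keeping the matrix $k$-quasi-triangular and driving the diagonal-entry degrees down toward $d$. The key is that whenever a diagonal block contains an $\bF$-irreducible factor $\chi$ of degree $\le k$, that whole $\chi$-power can be "un-diagonalized" as a unit: one replaces a $2\times 2$ (or $k\times k$) diagonal piece $\diag(a(\la),b(\la))$ by a companion-like or near-triangular block whose determinant is still $a(\la)b(\la)$ but whose diagonal entries are lower in degree, with the excess pushed into a single off-diagonal entry of degree $< d$. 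Because every irreducible factor has degree at most $k$, such a block can always be confined to size $k\times k$; this is exactly where the bound $k$ on the diagonal block size comes from, and where the new combinatorial lemma on partitioning integer multisets (mentioned in the introduction) is needed — to decide how to group the elementary divisors into diagonal blocks whose degree-sums are all exactly $d$.

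The key steps, in order, are: (1) reduce to elementary divisors — factor each $s_i(\la)$ into powers of distinct $\bF$-irreducibles and record the full multiset of elementary divisors $\chi^{\alpha}$ together with $n-m$ trivial entries, noting $\sum (\text{degrees of elementary divisors}) = \sigma = dn$ by hypothesis; (2) apply the combinatorial partitioning lemma to split this multiset of "weighted tiles" (weight $= \deg \chi \cdot \alpha \le$ a controlled bound, but more relevantly each atomic tile has degree a multiple of $\deg\chi \le k$) into $n$ groups, one per diagonal block, such that the total degree in each group is as balanced as possible and, after the un-diagonalization, each diagonal block realizing a group occupies at most $k$ rows/columns and contributes degree exactly $d$ to $\det Q$; here majorization is the tool that certifies such a balanced partition is compatible with the constant target sequence $(d,d,\dots,d)$; (3) for each group, build the corresponding diagonal block as an explicit small $k$-quasi-triangular (indeed, as small as $1\times 1$ when a single elementary divisor already has degree $d$) matrix polynomial of degree $d$ whose only invariant polynomial is the product of the elementary divisors assigned to it — this is a local, constructive step using companion-type forms, and one checks the off-diagonal entries inside the block have degree $< d$; (4) assemble the blocks along the diagonal and verify globally that $Q(\la)$ has degree $d$, is strictly regular (leading coefficient nonsingular, equivalently $\deg\det Q = dn$, which holds by construction since each block contributes $d$), is $k$-quasi-triangular, and has precisely $s_1,\dots,s_m$ as its nontrivial invariant polynomials (the divisibility chain is recovered because the elementary divisors were distributed respecting it) plus $n-m$ trivial ones; (5) finally, arrange the global block structure to be upper triangular and argue the remaining genuinely off-diagonal blocks (those linking distinct diagonal blocks) can be taken to be zero, so the only off-diagonal entries are the intra-block ones already controlled in step (3), giving the "every off-diagonal entry has degree $< d$" conclusion.

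The main obstacle I expect is step (2)–(3): proving that the elementary divisors can always be partitioned into $n$ bins each of total degree exactly $d$ while simultaneously (a) respecting the divisibility chain so the reconstructed invariant polynomials come out right, and (b) keeping each bin realizable by a block of size $\le k$. The arithmetic constraint "$dn = \sigma$" only gives the total; the existence of a bin-by-bin exact-$d$ split is a genuinely nontrivial number-theoretic/combinatorial fact, since an individual elementary divisor $\chi^\alpha$ may have degree larger than $d$, forcing it to straddle several bins — but then it alone needs $\lceil \alpha\deg\chi / d\rceil$ bins of size dictated by how $d$ relates to $\deg\chi$, and one must check this never exceeds $k$ per bin and that the leftover "fractional" pieces reassemble correctly into degree-$d$ blocks. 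This is precisely the content of the promised combinatorial partitioning lemma, and getting its hypotheses to line up with the divisibility-chain bookkeeping and the majorization argument is the technical heart of the proof; everything else is either the standard Smith-form machinery or explicit small-matrix constructions.
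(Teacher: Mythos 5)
There is a genuine gap, and it sits exactly where you predicted the ``technical heart'' would be. Your step (2) asks for a partition of the irreducible/elementary-divisor data into bins of total degree \emph{exactly} $d$ (you say ``$n$ bins each of total degree exactly $d$''), with each bin realized by a diagonal block. This is impossible in general: take a single $\bF$-irreducible divisor $\chi$ with $\deg\chi = k \ge 2$ coprime to $d$ (Example~\ref{ex.sharp-bound} in the paper). Every bin's degree is then a multiple of $k$, so no bin can have degree $d$, and indeed no quasi-triangularization with any diagonal block smaller than $k\times k$ exists for such data. The paper's resolution is weaker and two-staged: first the irreducible factors are spread along the diagonal so that the diagonal \emph{degrees} are only $k$-homogeneous (differ by at most $k$) with average $d$ (Lemma~\ref{Stacking} plus Corollary~\ref{cor.un-diagonalization}); then the Homogeneous Partitioning Lemma~\ref{lem.homog-partition} groups these diagonal entries into contiguous blocks of size $n_j \le k$ whose degree-sum is $dn_j$; and only then is each block converted into a degree-$d$ strictly regular matrix by the Fundamental Realization Theorem (Corollary~\ref{cor.prescribed}). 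The exact degree $d$ is achieved at the level of whole blocks by un-triangularizing them, never at the level of individual bins or entries. Your companion-form construction inside each block is in the right spirit, but the partition it presupposes does not exist.

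The second gap is your step (5): the inter-block off-diagonal blocks cannot in general be taken to be zero. A block-diagonal matrix has invariant polynomials obtained by merging those of its blocks, and if two diagonal blocks each carry a copy of the same irreducible $\chi$, that $\chi$ will be distributed across two distinct invariant polynomials of the whole matrix; the prescribed data may instead require both copies concentrated in $s_m$ (i.e.\ partial multiplicity sequence $(0,\dotsc,0,2)$ rather than $(0,\dotsc,0,1,1)$). The coupling entries are therefore essential to reproduce the divisibility chain. The paper avoids this issue entirely by never building the matrix from scratch: every move is a unimodular equivalence applied to the Smith form, so the invariant polynomials are preserved automatically, and the off-diagonal blocks that arise are then reduced to degree $<d$ by left/right matrix polynomial division in a careful sweep-by-block-diagonals order (Lemma~\ref{LemOffDiagClean}), which leaves the diagonal blocks and the already-reduced blocks untouched. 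Any repair of your argument must either keep these coupling blocks and control them this way, or separately prove that the chosen block decomposition happens to be Smith-form compatible, which is false for generic data.
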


\noindent
Note that the two conditions in this theorem,
i.e., that $n \ge m$ and $dn = \sigma$,
are both necessary conditions for \emph{any} strictly regular realization
of the given data,
whether that realization is quasi-triangular or not.
The restriction $n \ge m$ simply says that the realization (and its Smith form)
needs to be big enough to accommodate all of the nontrivial invariant polynomials.
The condition $dn = \sigma$ is simply the Index Sum Theorem
in the form \eqref{ISMeq-StrictlyReg} required of any strictly regular realization
of data with the given $\sigma$.

Now before embarking on 
the extensive technical details
of the proof of Theorem~\ref{thm.QTR-strictly-regular},
which will occupy our attention for the rest 
of Section~\ref{sect.realization-of-finite-spectral-data},
it will be helpful to give a brief idea of the overall strategy
of the argument.
The first step in our process of quasi-triangular realization
is to construct the Smith form corresponding to the given data
and the choice of $n$, i.e.,
\[
S(\la) =  I_{(n-m)\times(n-m)} \oplus \diag\{s_1(\la),s_2(\la),\dotsc,s_m(\la)\} \,.
\]
We take this as our starting point,
and begin changing $S(\la)$ by unimodular transformations,
``un-diagonalizing'' it and
slowly turning it into the desired quasi-triangularization.
The first phase of this un-diagonalization of the Smith form
aims to shift irreducible factors around on the diagonal,
in such a way as to try to make the degrees
of all of the diagonal entries
as close to  the target degree $d$ as possible. 
This is a proof idea 
pioneered in~\cite{matrixpolynomials},
and developed further in~\cite{TasTisZab}.
Although the diagonal form is sacrificed, in this first stage
at least upper triangularity is maintained.

Now for some collections of spectral data,
this phase may succeed in making the diagonal entries
all have exactly the target degree $d$;
in this case a triangularization is obtained.
As shown in \cite{TasTisZab}, this can always be achieved
when the underlying field is algebraically closed.
However, for arbitrary fields we have to be satisfied with something less.
The best that can be achieved in general is to rearrange the irreducible factors
along the diagonal so that no two diagonal entries differ in degree by more than $k$
(the maximum degree among all of the irreducible divisors of the given spectral data),
and so that the vector of diagonal entries can be grouped
into contiguous blocks $D_1, \dotsc, D_{\ell}$ for some $\ell \le n$, each of size at most $k$,
where the \emph{average} degree of the entries in each $D_j$
is exactly the target degree $d$.

In the second phase of construction,
the upper triangular matrix attained so far
is partitioned into blocks,
with square (upper triangular) blocks down the diagonal
that correspond to the contiguous blocks $D_j$
of diagonal entries just mentioned.
Each of these diagonal blocks is now ``un-triangularized''
so that it has exactly degree $d$.
We now have a quasi-triangular realization
in which all of the diagonal blocks have degree $d$,
but the off-diagonal blocks have been left uncontrolled,
and may still have degree larger than $d$.

The final step, then, is to visit each of the off-diagonal blocks
in a ``sweep by super-diagonals pattern'',
using matrix polynomial division
to force the degree of all of the off-diagonal blocks to be strictly less than $d$.
This completes the construction of a $k$-quasi-triangular realization of degree $d$
for the given spectral data.


\begin{ejem}\label{ex.intro}
Throughout this paper, we will keep returning to 
a single illustrative example,
continually developing it as we go. 
To start, consider the finite spectral data over the two-element field $\bF = \bZ_2$,
consisting of the invariant polynomials
\[
  s_1(\la) = \phi\psi \,, \quad s_2(\la) = \chi\phi\psi \,, \quad
  s_3(\la) = \chi^2\phi\psi^2 \,, \quad s_4(\la) = \chi^3\phi\psi^2 \,,
  \;\text{ and }\; s_5(\la) = \chi^3\phi^3\psi^4 \,,
\]
where $ \chi(\la) = \la^4 + \la^3 + 1 $, $ \phi(\la) = \la^2 + \la + 1 $,
and $ \psi(\la) = \la $ are $\bF$-irreducible.
The sum of the degrees of these invariant polynomials is $\sigma = 60$,
so we may choose the target degree to be $ d = 10 $.
As a consequence, the size of the realization must be $6 \times6$,
and so the Smith form for our given data is
\[
  S(\la) = \diag\{1,s_1(\la),s_2(\la),s_3(\la),s_4(\la),s_5(\la)\} \,.
\]
The irreducible divisors together with their associated partial multiplicity sequences are
\begin{align*}
    \PMg(\chi) & \,=\, (0,0,1,2,3,3)\\
    \PMg(\phi) & \,=\, (0,1,1,1,1,3) \\
    \PMg(\psi) & \,=\, (0,1,1,2,2,4) \,,
\end{align*}
which provides an alternative way to present the given finite spectral data.
\end{ejem}

\subsection{Coprime partitions, factor-counting vectors, and the unimodular transfer lemma}

In this section we develop the tools needed to implement the first phase
of the construction of a quasi-triangular realization
from given finite spectral data.
That is, we will see how it is possible
to rearrange irreducible factors along the diagonal
via unimodular equivalence,
while at the same time maintaining upper triangularity.
However, before we present these tools,
some additional concepts and terminology will be required.

Let $\cM$ be the multiset 
of all of the $\mathbb{F}$-irreducible factors of a scalar polynomial $p(\lambda)$.
A partition of $\cM$ into a disjoint union $\cM = \cF \sqcup \cG$
of multisets $\cF$ and $\cG$
is called a \emph{coprime partition} 
if every $f \in \cF$ is coprime to every $g \in \cG$.
This is equivalent to saying that there is no irreducible factor of $p$
that appears in both $\cF$ and $\cG$.
Given such a coprime partition, we can uniquely factor $p(\lambda)$ into
\[
  p(\lambda) \;=\; p_{\scF}(\lambda)\cdot p_{\scG}(\lambda) \,,
\]
where $p_{\scF}(\lambda)$ denotes the product
of all of the $\mathbb{F}$-irreducible factors in $p(\la)$ from $\mathcal{F}$,
and $p_{\scG}(\lambda)$ denotes the product
of all of the $\mathbb{F}$-irreducible factors in $p(\la)$ from $\mathcal{G}$.
We also denote by $|p(\la)|_{\scF}$ 
the total \emph{number} of $\mathbb{F}$-irreducible factors
from $\mathcal{F}$ 
in $p(\la)$, with a similar meaning for $|p(\la)|_{\scG}$.
Note that $|p_{\scF}(\la)|_{\scF} = |p(\la)|_{\scF}$.

If $\mathbf{v}(\la)$ is a polynomial $n$-vector
and $\cF \sqcup \cG$ is a coprime partition of the multiset
of all of the $\bF$-irreducible factors of all of the entries of $\mathbf{v} (\la)$,
then the integer vector
\[
   |\mathbf{v}(\la)|_{\scF} \,:=\; \bigl( |v_1 (\la)|_{\scF}, \dotsc, |v_n (\la)|_{\scF} \bigr)
\]
is the \emph{factor-counting vector}
of $\mathbf{v}(\la)$ with respect to $\cF$.
Also useful is the integer \emph{degree vector}
\[
  \deg \mathbf{v} :=\,\bigl( \deg v_1(\la), \dotsc , \deg v_n(\la) \bigr) \,.
\]
 Finally, given an $\nbyn$ matrix polynomial $P(\la)$,
its main diagonal vector $\mathbf{p} (\la) := \diag P(\la)$,
and a coprime partition $\mathcal{F} \sqcup \mathcal{G}$
of the multiset $\mathcal{M}$ of all of the $\mathbb{F}$-irreducible factors
of the entries of $\mathbf{p} (\la)$,
we define the \emph{diagonal factor-counting vector} of $P(\lambda)$
with respect to $\mathcal{F}$ to be the integer vector
\begin{equation} \label{eq-diag-count}
  \mathbf{d}_{\scF}(P) \,:=\; 
                  |\mathbf{p}(\la)|_{\scF} \,.
\end{equation}

With these concepts, terminology, and notation in hand,
we can now describe and develop the tool for transferring irreducible factors
along the diagonal of an upper triangular matrix polynomial.

\begin{lema}[$2\times 2$ Unimodular Transfer Lemma]
	\label{LemUniTransf}
	Let
	\[
	  T(\lambda) \,=\,
	  \left[
	  \begin{array}{cc}
	    p(\lambda) & q(\lambda)\\
	    0 & r(\lambda)
	  \end{array}
	  \right]
	\]
	be a regular $2\times 2$ upper triangular matrix polynomial
	over an arbitrary field \,$\mathbb{F}$.
	Let $\mathcal{F}\sqcup\mathcal{G}$ be any coprime partition
	of the multiset of all of the $\mathbb{F}$-irreducible factors
	in the product $p(\lambda)r(\lambda)$. 
	Let $m=|p|_{\scF}$ and $ n=|r|_{\scF}$ so that $\mathbf{d}_{\scF}(T)=(m,n)$.
	Then for any $\alpha,\beta\in\mathbb{N}$
	such that $\alpha+\beta=m+n$ and $\min\{m,n\}\leq\alpha,\beta\leq\max\{m,n\}$,
	there exists a regular upper triangular matrix polynomial $\widetilde T(\lambda)$
	of the form
	\begin{equation}
	\label{eqn.wt-T}
	\widetilde T(\lambda)=\left[
	\begin{array}{cc}
	\widetilde p_{\scF}(\lambda)\cdot p_{\scG}(\lambda) & \widetilde q(\lambda)\\
	0 & \widetilde r_{\scF}(\lambda)\cdot r_{\scG}(\lambda)
	\end{array}
	\right]
	\end{equation}
	with $\mathbf{d}_{\scF}(\widetilde T)=(\alpha,\beta)$,
	such that $T(\la) $ is unimodularly equivalent to $\widetilde T(\la)$.
\end{lema}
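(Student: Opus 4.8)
The plan is to verify the required unimodular equivalence purely through the Smith form, using the gcd characterization~\eqref{eqn.gcd-characterization}, and then to \emph{construct} a $\widetilde T(\la)$ of the prescribed shape with the correct invariant polynomials. First I would compute the Smith form of $T(\la)$: since $T$ is $2\times2$, upper triangular and regular, its $1\times1$ minors are $p,q,0,r$ and its unique $2\times2$ minor is $pr\ne 0$, so by~\eqref{eqn.gcd-characterization} the Smith form of $T$ is $\diag\bigl(s_1(\la),s_2(\la)\bigr)$ with $s_1:=\gcd(p,q,r)$ (taken monic) and $s_2:=pr/s_1$. In particular $s_1\mid p$ and $s_1\mid r$. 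Writing $s_1=(s_1)_{\scF}\cdot(s_1)_{\scG}$ along the coprime partition and using that $\cF$-factors are coprime to $\cG$-factors, the divisibilities split as $(s_1)_{\scF}\mid p_{\scF}$, $(s_1)_{\scG}\mid p_{\scG}$, $(s_1)_{\scF}\mid r_{\scF}$, and $(s_1)_{\scG}\mid r_{\scG}$.

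Next I would do the combinatorial bookkeeping that produces the new $\cF$-parts of the diagonal. Let $\cM_{\scF}$ be the multiset of $\cF$-irreducible factors of $p_{\scF}(\la)\,r_{\scF}(\la)$, counted with multiplicity, so that $|\cM_{\scF}|=m+n=\alpha+\beta$, and set $c:=|(s_1)_{\scF}|_{\scF}$. Since $|\cdot|_{\scF}$ is additive over products and monotone under divisibility, $(s_1)_{\scF}\mid p_{\scF}$ and $(s_1)_{\scF}\mid r_{\scF}$ give $c\le m$ and $c\le n$, hence $c\le\min\{m,n\}$; moreover $(s_1)_{\scF}^2\mid p_{\scF}r_{\scF}$, so $\cM_{\scF}$ contains two disjoint sub-multisets, each equal to the factor multiset of $(s_1)_{\scF}$. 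The hypothesis $\min\{m,n\}\le\alpha,\beta\le\max\{m,n\}$ is precisely the pair of conditions $c\le\alpha,\beta$ and $\alpha,\beta\le(m+n)-c$, which allows me to split $\cM_{\scF}=\cA\sqcup\cB$ into sub-multisets with $|\cA|=\alpha$ and $|\cB|=\beta$ such that $\cA$ contains one of those copies of the factors of $(s_1)_{\scF}$ and $\cB$ the other: reserve one copy for $\cA$ and the other for $\cB$, then give $\alpha-c$ of the remaining $m+n-2c$ factors to $\cA$ and $\beta-c$ to $\cB$. Put $\widetilde p_{\scF}:=\prod\cA$ and $\widetilde r_{\scF}:=\prod\cB$ (monic products), so that $\widetilde p_{\scF}\,\widetilde r_{\scF}=p_{\scF}r_{\scF}$, \;$|\widetilde p_{\scF}|_{\scF}=\alpha$, \;$|\widetilde r_{\scF}|_{\scF}=\beta$, and $(s_1)_{\scF}$ divides both $\widetilde p_{\scF}$ and $\widetilde r_{\scF}$.

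Then I would exhibit the candidate
\[
  \widetilde T(\la):=\left[\begin{array}{cc}
    \widetilde p_{\scF}(\la)\cdot p_{\scG}(\la) & s_1(\la)\\
    0 & \widetilde r_{\scF}(\la)\cdot r_{\scG}(\la)
  \end{array}\right]
\]
and check the claims. It has the form~\eqref{eqn.wt-T}; as $\widetilde p_{\scF}$ is a product of $\cF$-irreducibles and $p_{\scG}$ of $\cG$-irreducibles, the $\cF$-part of the first diagonal entry is $\widetilde p_{\scF}$ and likewise for the second, so $\mathbf{d}_{\scF}(\widetilde T)=(\alpha,\beta)$. Its determinant is $\widetilde p_{\scF}\widetilde r_{\scF}\cdot p_{\scG}r_{\scG}=p_{\scF}r_{\scF}\cdot p_{\scG}r_{\scG}=pr\ne 0$, so $\widetilde T$ is regular and upper triangular. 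For the Smith form, $(s_1)_{\scF}\mid\widetilde p_{\scF}$ and $(s_1)_{\scG}\mid p_{\scG}$ (coprime) give $s_1\mid\widetilde p_{\scF}p_{\scG}$, and similarly $s_1\mid\widetilde r_{\scF}r_{\scG}$; hence $\gcd\bigl(\widetilde p_{\scF}p_{\scG},\,s_1,\,\widetilde r_{\scF}r_{\scG}\bigr)=s_1$, since $s_1$ divides all three terms while the gcd divides the middle one. By~\eqref{eqn.gcd-characterization} applied to $\widetilde T$, its invariant polynomials are $s_1$ and $\det\widetilde T/s_1=pr/s_1=s_2$, so $\widetilde T$ and $T$ have the same Smith form; by Theorem~\ref{Smithform} they are therefore unimodularly equivalent, which finishes the proof.

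The only genuinely delicate point --- and the sole place where the hypotheses on $\alpha$ and $\beta$ are used --- is retaining a \emph{full} copy of $(s_1)_{\scF}$ in \emph{both} diagonal positions simultaneously; this is exactly what keeps the first invariant polynomial equal to $s_1$ after the factors have been shuffled, and it is feasible precisely because $\alpha,\beta\ge\min\{m,n\}\ge c$, with the complementary bounds $\alpha,\beta\le\max\{m,n\}$ leaving enough room for the second copy. If one dropped these bounds, a factor of $s_1$ could be forced out of a diagonal entry, strictly shrinking the gcd and altering the Smith form. (One could instead give a constructive proof realizing the equivalence by a sequence of elementary row and column operations that transfer a single irreducible factor at a time through the superdiagonal entry, but the Smith-form argument above is shorter and already yields the existence statement claimed.)
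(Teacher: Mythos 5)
Your proof is correct and follows essentially the same route as the paper's: both identify $s_1=\gcd(p,q,r)$ as the first invariant polynomial, place it in the off-diagonal position of $\widetilde T$, redistribute the $\cF$-factors of $p_{\scF}r_{\scF}$ between the diagonal entries while reserving a full copy of $(s_1)_{\scF}$ for each, and conclude by checking that the Smith form is unchanged. Your explicit multiset bookkeeping with $c=|(s_1)_{\scF}|_{\scF}$ is just a more detailed rendering of the paper's observation that any $(\alpha,\beta)$ with $\delta\le\alpha,\beta\le m+n-\delta$ is achievable, where $\delta=\absF{g(\la)}$.
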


\begin{proof}
	If $m=n$ there is nothing to do, so assume that $m \ne n$.
	Let $g(\la) := \gcd \bigl\{ p(\la), q(\la), r(\la) \bigr\}$,
	and factor
	\begin{equation} \label{eqn.factor-p-r}
	p(\la) = g(\la) \cdot \wh{p}(\la) \und r(\la) = g(\la) \cdot \wh{r}(\la) \,.
	\end{equation}
	Then the Smith form of $T(\la)$
	is $\diag \{ g(\la), g(\la) \,\wh{p}(\la) \,\wh{r}(\la) \}$,
	so any $\wt{T}$ as in \eqref{eqn.wt-T} that we construct that has this Smith form
	will be unimodularly equivalent to $T$.
	Here is how to construct many such $\wt{T}$'s.
		
	Begin by refining the factorizations of $p$ and $r$ in \eqref{eqn.factor-p-r},
	in a way that is compatible with the given coprime partition $\cF \sqcup \cG$:
	\begin{align}
	p(\la) = g(\la) \cdot \wh{p}(\la)
	&= \bigl( g_{\scF}(\la) g_{\scG}(\la) \bigr)
	\cdot \bigl(\wh{p}_{\scF}(\la) \wh{p}_{\scG}(\la) \bigr) \nonumber \\
	&= \bigl( g_{\scF}(\la) \wh{p}_{\scF}(\la) \bigr)
	\cdot \bigl(g_{\scG}(\la) \wh{p}_{\scG}(\la) \bigr)
	= p_{\scF}(\la) \cdot p_{\scG}(\la)
	\end{align}
	and
	\begin{align}
	r(\la) = g(\la) \cdot \wh{r}(\la)
	&= \bigl( g_{\scF}(\la) g_{\scG}(\la) \bigr)
	\cdot \bigl(\wh{r}_{\scF}(\la) \wh{r}_{\scG}(\la) \bigr) \nonumber \\
	&= \bigl( g_{\scF}(\la) \wh{r}_{\scF}(\la) \bigr)
	\cdot \bigl(g_{\scG}(\la) \wh{r}_{\scG}(\la) \bigr)
	= r_{\scF}(\la) \cdot r_{\scG}(\la).
	\end{align}
		
	Since we wish to leave $p_{\scG}(\la)$ and $r_{\scG}(\la)$
	completely undisturbed in going from $T$ to $\wt{T}$,
	and also to have $g(\la)$ present in both diagonal entries of $\wt{T}$
	in order to preserve the Smith form,
	this means that the only room for maneuvering
	is with the factors in $\wh{p}_{\scF}(\la)$ and $\wh{r}_{\scF}(\la)$.
	So let $a(\la)$ and $b(\la)$ be \emph{any} two polynomials
	(including possibly $a\equiv 1$ or $b\equiv 1$) over $\bF$
	such that
	\begin{equation} \label{eqn.ab}
	a(\la) \cdot b(\la) \,=\, \wh{p}_{\scF}(\la) \cdot \wh{r}_{\scF}(\la) \,,
	\end{equation}
	and consider the polynomial matrix
	\begin{equation} \label{eqn.wt-T2}
	  \wt{T}(\la) \,=\, \mat{cc} g_{\scF}(\la) a(\la) \cdot p_{\scG}(\la) & g(\la) \\
	                                     0 & g_{\scF}(\la) b(\la) \cdot r_{\scG}(\la) \rix
	              \,=\, \mat{cc} g(\la) \cdot a(\la) \wh{p}_{\scG}(\la) & g(\la) \\
	                               0 & g(\la) \cdot b(\la) \wh{r}_{\scG}(\la) \rix \,, 
	\end{equation}
	of the form in \eqref{eqn.wt-T}
	with $\wt{p}_{\scF}(\la) = g_{\scF}(\la) a(\la)$, $\wt{q}(\la) = g(\la)$,
	and $\wt{r}_{\scF}(\la) = g_{\scF}(\la) b(\la)$.
	Then the $\gcd$ of the entries of $\wt{T}$ is easily seen to be $g(\la)$,
	so the Smith form of $\wt{T}(\la)$ is
	\begin{align*}
	\diag \left\lbrace g(\la),
	\,g(\la) \bigl[ a(\la) b(\la) \bigr]
	\!\cdot \bigl[ \wh{p}_{\scG}(\la) \wh{r}_{\scG}(\la) \bigr] \right\rbrace
	&=\,
	\diag \left\lbrace g(\la),
	\,g(\la) \bigl[ \wh{p}_{\scF}(\la) \wh{r}_{\scF}(\la) \bigr]
	\!\cdot \bigl[ \wh{p}_{\scG}(\la) \wh{r}_{\scG}(\la) \bigr] \right\rbrace \\
	&=\,
	\diag \left\lbrace g(\la),
	\,g(\la) \wh{p}(\la) \wh{r}(\la) \right\rbrace \,,
	\end{align*}
	which is identical to the Smith form of $T(\la)$.
	Thus for any choice of $a(\la)$ and $b(\la)$ in \eqref{eqn.ab},
	we have $\wt{T} \sim T$.
	Letting $\delta = \absF{g(\la)}$, we see that
	the diagonal factor-counting vector $\mathbf{d}_{\scF}(\wt{T})$
	for $\wt{T}(\lambda)$ in \eqref{eqn.wt-T2}
	can be $(\alpha, \beta)$ for any $\alpha + \beta = m + n$
	with $\delta \le \alpha, \beta \le m + n - \delta$.
	Since
	\[
	\delta \,\le\, \min(m, n) \,\le\, \max(m, n) \,\le\, m + n - \delta \,,
	\]
	any $(\alpha, \beta)$ pair given in the statement of the lemma
	is always achievable for $\mathbf{d}_{\scF}(\wt{T})$.
\end{proof}

This lemma for $2 \times 2$ triangular matrices
can now be used on a triangular matrix of any size
to ``transfer" irreducible factors belonging to a family $\mathcal{F}$
between any two \emph{adjacent} diagonal entries,
while maintaining triangularity,
and without disturbing any of the factors
that belong to the complementary family $\mathcal{G}$.
This is done by embedding the $2 \times 2$ unimodular transformations
provided by Lemma~\ref{LemUniTransf} into larger identity matrices.

\begin{coro}\label{cor.unimod-trans}
Let $ P(\la) $ be a regular $ n\times n $ upper triangular polynomial matrix,
and let $\cM = \cF \sqcup \cG $ be a coprime partition
of the multiset of all irreducible factors
of the diagonal entries of $ P(\la) $. 
Consider any $2 \times 2$ principal submatrix of $P(\la)$
with \emph{adjacent} diagonal entries, i.e.,
\[
  T(\la) = \left[\begin{array}{cc} p_{ii}(\la) & p_{ij}(\la) \\
                                       0 & p_{jj}(\la) \end{array}\right]
    \quad \text{ with } \quad j = i+1 \,,
\]
and let $ \mathbf{d}_{\scF}(T)=(m,n) $.
Then for any integers $\alpha,\beta$ such that $\alpha+\beta=m+n$
and $\min\{m,n\}\leq\alpha,\beta\leq\max\{m,n\}$,
there exists a regular upper triangular matrix polynomial $\widetilde P(\lambda)$
such that     
\begin{itemize}
  \item $\wt p_{ii}(\la) = [\wt p_{ii} (\la)]_{\scF} \cdot p_{\scG}(\la)$
        \,\,with\,\, $\absF{\wt p_{ii}(\la)} = \alpha$\,,
  \item $\wt p_{jj}(\la) = [\wt p_{jj} (\la)]_{\scF} \cdot p_{\scG}(\la)$
        \,\,with\,\, $\absF{\wt p_{ii}(\la)} = \beta$\,,
  \item $\wt p_{kk}(\la) = p_{kk}(\la)$ \,\,for all\,\, $k \ne i,j$\,,
%
  \item $\wt P(\la)$ is unimodularly equivalent to $P(\la)$.
\end{itemize}
\end{coro}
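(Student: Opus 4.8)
The plan is to bootstrap from the $2\times2$ result established in \lemref{LemUniTransf}: I would take the $2\times2$ unimodular transformation it supplies, embed it into $n\times n$ identity matrices, apply it to $P(\la)$, and then check by a short entrywise argument that upper triangularity survives. The one delicate point is that this last check uses the adjacency hypothesis $j = i+1$ essentially.

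First I would pass to a restricted coprime partition. Since $\cM = \cF\sqcup\cG$ is a coprime partition of the multiset of all irreducible factors of the diagonal entries of $P(\la)$, intersecting $\cF$ and $\cG$ with the multiset of irreducible factors of the product $p_{ii}(\la)\,p_{jj}(\la)$ gives a coprime partition of that smaller multiset whose $\cF$-counts are still $(m,n)$. Regularity of the upper triangular $P(\la)$ forces every $p_{kk}(\la)$ to be nonzero, so the $2\times2$ principal submatrix $T(\la)$ is a regular upper triangular matrix polynomial, and \lemref{LemUniTransf} applies: for the prescribed $\alpha,\beta$ it produces unimodular $U(\la),V(\la)\in\bF[\la]^{2\times2}$ with $U(\la)\,T(\la)\,V(\la) = \wt T(\la)$, where $\wt T(\la)$ has the form \eqref{eqn.wt-T} and $\mathbf{d}_{\scF}(\wt T) = (\alpha,\beta)$.

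Next I would embed and transform. Using $j = i+1$, set $\wh U(\la) := I_{i-1}\oplus U(\la)\oplus I_{n-i-1}$ and $\wh V(\la) := I_{i-1}\oplus V(\la)\oplus I_{n-i-1}$, so that $\det\wh U = \det U$ and $\det\wh V = \det V$ are nonzero constants; hence $\wh U,\wh V$ are unimodular and $\wt P(\la) := \wh U(\la)\,P(\la)\,\wh V(\la)$ is unimodularly equivalent to $P(\la)$. Since $\wh U$ and $\wh V$ act as the identity outside rows and columns $i,j$, the $\{i,j\}\times\{i,j\}$ principal submatrix of $\wt P$ equals exactly $U(\la)\,T(\la)\,V(\la) = \wt T(\la)$; from the form \eqref{eqn.wt-T} this means $\wt p_{ii}(\la)$ is its $\cF$-part times the undisturbed $\cG$-part of $p_{ii}(\la)$, with $\absF{\wt p_{ii}(\la)} = \alpha$, and likewise $\wt p_{jj}(\la)$ with $\cF$-count $\beta$. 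Moreover, for $k\notin\{i,j\}$ the $k$-th row of $\wh U$ and the $k$-th column of $\wh V$ are standard basis vectors, so $\wt p_{kk}(\la) = p_{kk}(\la)$, and more generally $\wt p_{ab}(\la) = p_{ab}(\la)$ whenever $a,b\notin\{i,j\}$.

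The main obstacle is the remaining assertion, that $\wt P(\la)$ is globally upper triangular, and this is exactly where adjacency enters. I would verify $\wt p_{ab}(\la) = 0$ for $a > b$ by cases. If $a,b\notin\{i,j\}$ this is just $p_{ab} = 0$. If $a\in\{i,j\}$ and $b\notin\{i,j\}$, then $b < a \le j = i+1$ gives $b\le i$ and hence (as $b\notin\{i,j\}$) $b < i$; but $\wt p_{ab}(\la)$ is an $\bF[\la]$-combination of $p_{ib}(\la)$ and $p_{jb}(\la)$, both of which vanish since $i,j > b$. The case $a\notin\{i,j\}$, $b\in\{i,j\}$ is symmetric: $a > b \ge i$ and $a\notin\{i,j\}$ force $a > j$, and $\wt p_{ab}(\la)$ is a combination of $p_{ai}(\la)$ and $p_{aj}(\la)$, both zero. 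Finally, if $a,b\in\{i,j\}$, the only subdiagonal case is $(a,b) = (j,i)$, which is the $(2,1)$ entry of $\wt T(\la)$, equal to $0$ by \lemref{LemUniTransf}. If $i$ and $j$ were not adjacent this would break down --- e.g.\ $p_{i,i+1}(\la)$ would be carried into position $(j,i+1)$, strictly below the diagonal and outside the modified block, hence not cancellable --- so the hypothesis $j = i+1$ is precisely what makes the argument go through. Assembling the computations of the previous paragraph with this one gives all four asserted properties of $\wt P(\la)$, completing the proof.
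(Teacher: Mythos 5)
Your proposal is correct and follows essentially the same route as the paper: apply Lemma~\ref{LemUniTransf} to the $2\times 2$ submatrix, embed the resulting unimodular factors into identity matrices, and conjugate $P(\la)$. The only difference is that you explicitly verify the preservation of upper triangularity (and the role of the adjacency hypothesis $j=i+1$), a check the paper leaves implicit; this is a welcome addition rather than a divergence.
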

\begin{proof}
Apply Lemma \ref{LemUniTransf} to the submatrix $ T $
to get two $2 \times 2$ unimodular matrices $ E(\la) $ and $ F(\la) $
such that
\[
  E(\la)T(\la)F(\la) =
     \left[\begin{array}{cc} [\wt p_{ii} (\la)]_{\scF} \cdot p_{\scG}(\la) & \wt p_{ij}(\la) \\
                                0 & [\wt p_{jj} (\la)]_{\scF} \cdot p_{\scG}(\la) \end{array}\right]
\]
with diagonal $ \cF $-factor-counting vector $ (\alpha,\beta) $.
Then construct the $ n\times n $ unimodular matrices
\[
  \wh E(\la) \,=\, \mat{c|c|c}
                       I_{i-1} && \\
                    \hline \\[-12pt]
                          & \mystrut{4.1mm} E(\la) & \\
                    \hline \\[-12pt]
                          && \mystrut{4.1mm} I_{n-i-1}
                   \rix
   \quad\text{ and }\quad
  \wh F(\la) \,=\, \mat{c|c|c}
                       I_{i-1} && \\
                    \hline \\[-12pt]
                          & \mystrut{4.1mm} F(\la) & \\
                    \hline \\[-12pt]
                          && \mystrut{4.1mm} I_{n-i-1}
                   \rix \,.
\]
Transforming $P(\la)$ using these two unimodular matrices,
we obtain the desired matrix $\wt P(\la) = \wh E(\la)P(\la)\wh F(\la)$.
\end{proof}

\begin{ejem}\label{ex.factor-counting}
Consider the Smith form $ S(\la) $ from Example \ref{ex.intro}.
The multiset $\cM$ of all of the irreducible factors in the entries of $S(\la)$
contains many copies of $\chi$, $\phi$, and $\psi$,
but we can partition it into 
$\cM = \cF_1 \sqcup \cF_2 \sqcup \cF_3 \sqcup \cF_4$,
where $\cF_j$ contains all of the irreducible factors of degree $j$.
Thus we see that $ \cF_1 $ contains all of the copies of $ \psi $,
$ \cF_2 $ contains all of the copies of $ \phi $, $ \cF_3 $ is empty,
and $ \cF_4 $ contains all of the copies of $ \chi $.
The diagonal factor-counting vectors of $ S(\la) $ then are
\begin{align*}
  \mathbf{d}_{\scFo}(S) & \;=\; (0,1,1,2,2,4) \,,
              \hspace*{12mm} \mathbf{d}_{\scFth}(S) \;=\; (0,0,0,0,0,0) \,, \\
  \mathbf{d}_{\scFt}(S) & \;=\; (0,1,1,1,1,3) \,,
              \hspace*{12mm} \mathbf{d}_{\scFf}(S) \;=\; (0,0,1,2,3,3) \,.
\end{align*}
Note that we include $ \mathbf{d}_{\cF_3}(S) $ here not only for the sake of completeness,
but also to show that 
having any of the partition multisets be empty is allowed.
Later on, we will use Corollary \ref{cor.unimod-trans}
to rearrange the irreducible factors along the diagonal
so that these diagonal factor-counting vectors
will have the property of being \emph{1-homogeneous},
a concept to be defined in the next section.

Note that these diagonal factor-counting vectors just happen
to match up with the partial multiplicity sequences in this example.
But this is not typical,
and follows from there being at most one irreducible divisor
in each piece of the given coprime partition of $\cM$.
 For general coprime partitions, diagonal factor-counting vectors of Smith forms
will each be a \emph{sum} of partial multiplicity sequences.
\end{ejem}

\subsection{Homogenization of natural vectors and un-diagonalizing the Smith form}
   \label{subsect.homogenization}

With the ability to transfer irreducible factors along the diagonal
(Corollary~\ref{cor.unimod-trans}) now in our tool box,
it is time to see how to employ that tool to rearrange the diagonal irreducible factors
so as to make the new diagonal entries as close in degree to each other as possible.
This is phase $1$ of our quasi-triangular realization construction.
It will be shown  
that the best that can be done in general
is to make these diagonal entry degrees differ by at most $k$,
where $k$ is the highest degree among all of the irreducible factors along the diagonal.
To facilitate the discussion of this process,
we introduce the following two concepts.
Note that vectors whose entries are natural numbers,
in particular diagonal factor-counting vectors,
appear frequently in this discussion;
such vectors will be referred in brief as \emph{natural vectors}.
Keep in mind that in this paper the natural numbers $\bN$ \emph{includes zero}.

\begin{defi}
	A natural vector $\mathbf{v}=(v_1,\dots,v_r)\in\mathbb{N}^r$ is \emph{$k$-homogeneous}
	if $|v_i-v_j|\leq k$ for any $1\leq i,j\leq r$.
\end{defi}

\begin{defi}
\label{Def-hom-ver}
	Let $\mathbf{v}=(v_1,\dots, v_r)\in\mathbb{N}^r$
	with component sum $s = v_1 + v_2 + \dotsb + v_r$,
	and divide $s$ by $r$ to get $s = qr + t$, with $0 \le t < r$.
	Then any permutation of the $1$-homogeneous vector
	$$(q, q, \dotsc, q, \underbrace{q+1, q+1, \dotsc, q+1}_{t \,\text{ copies}}) \,\in\, \bN^r$$
	is called a \emph{homogenization} of $\mathbf{v}$.
\end{defi}

Before addressing our primary objective, that is,
the phase $1$ rearrangement of diagonal irreducible factors,
it will be useful to do a preliminary examination
of the process of homogenizing natural vectors
via two very simple operations
that we will refer to as ``interchange'' and ``compression''.
We will see that these two operations on natural vectors
are closely related to transformations of the diagonal of upper triangular polynomial matrices
achievable by the Unimodular Transfer Corollary~\ref{cor.unimod-trans}.
The operations of interchange and compression also bring us
into contact with the classical notion of majorization of vectors,
which we recall next.

\begin{defi}[Majorization~\cite{Marshall}] \label{def.majorization} \quad \\
  For vectors $\mathbf{x}, \mathbf{y} \in \bR^n$,
  let $ \mathbf{x}^{\prime} $ and $ \mathbf{y}^{\prime} $
  denote the permutations of those vectors
  in which the entries have been arranged in decreasing order.
  We say that \emph{$\mathbf{x}$ majorizes $\mathbf{y}$},
  or \emph{$ \mathbf{y} $ is majorized by $ \mathbf{x} $},
  and write $\, \mathbf{x} \succeq \mathbf{y} \,$, if
\begin{align}\label{eqn.majorization}
  \sum_{i=1}^{\ell} x_i^{\prime} \;\geq\; \sum_{i=1}^{\ell} y_i^{\prime}
    \quad \text{ for } \quad \ell = 1,2,\dotsc, n \;,
\end{align}
with equality when $ \ell = n $.
Clearly this definition also applies without change when restricted
to integer vectors $\mathbf{x}, \mathbf{y} \in \bZ^n$,
or even to natural vectors $\mathbf{x}, \mathbf{y} \in \bN^n$.
\end{defi}

\begin{remark} \label{rem.spread-out}
\rm
  It is useful to keep in mind the basic intuition about majorization,
  i.e., that if $\mathbf{x}$ majorizes $\mathbf{y}$,
  then the entries of $\mathbf{x}$ are more ``spread out''
  than those of $\mathbf{y}$.
  This can be seen, at least in part,
  by examining the two extreme components of the ordered vectors
  $\mathbf{x}'$ and $\mathbf{y}'$.
  From the $\ell = 1$ inequality in \eqref{eqn.majorization}
  we have $x^{\prime}_1 \ge y^{\prime}_1$,
  and by combining the $\ell = n$ and $\ell = n-1$ parts of \eqref{eqn.majorization}
  we see that $y^{\prime}_n \ge x^{\prime}_n$.
  Thus we have $x^{\prime}_1 \ge y^{\prime}_1 \ge \dotsb \ge y^{\prime}_n \ge x^{\prime}_n$,
  displaying some of the greater dispersion of the entries of $\mathbf{x}'$.
\end{remark}
It is a classical result of the study of majorization
(attributed in~\cite{Marshall} to a 1903 article of Muirhead \cite{Muirhead})
that for natural vectors $\mathbf{x}$ and $\mathbf{y}$,
$\mathbf{x}$ majorizes $\mathbf{y}$ $(\mathbf{x} \succeq \mathbf{y})$
if and only if
$\,\mathbf{x}$ can be transformed into $\mathbf{y}$
by a finite sequence of operations known
variously as \emph{transfers}, \emph{Dalton transfers},
or more recently (and colorfully) as \emph{Robin Hood transfers}
-- think ``rob from the rich and give to the poor''.
Such an operation takes any two components of a natural vector $\mathbf{v}$,
say $v_i$ and $v_j$, and replaces them by natural numbers $\alpha$ and $\beta$
that are closer together in size;
more specifically, where $\alpha + \beta = v_i + v_j$
and $|\alpha - \beta| \le |v_i - v_j|$.
(Note that in his proof of this result,
 Muirhead uses only transfers in which $v_i$ and $v_j$
 change by exactly $1$, i.e., $\alpha = v_i \pm 1$ and $\beta = v_j \mp 1$.)

 For our purposes, we need to have a Muirhead-like result
for converting a natural vector into a homogenization of itself.
However, since we will ultimately need to implement these conversions
by unimodular transformations that preserve upper triangularity
(see Corollary~\ref{cor.unimod-trans}),
it is essential that we limit our operations on natural vectors
to ones that act only on \emph{adjacent entries} of a vector,
which is more restrictive than the Robin Hood transfers used by Muirhead.
Thus we introduce the following two operations
acting on natural vectors:

\begin{itemize}
\item \textbf{Interchange} of \emph{adjacent} components:
  \[
    (v_1,\dots,v_i, v_{i+1}, \dots,v_r)
      \leadsto
    (v_1,\dots, v_{i+1}, v_i,\dots,v_r) \in \mathbb{N}^r
  \]
\item \textbf{Compression} of \emph{adjacent} components:
  \[
    (v_1,\dots, v_{i-1}, v_i, v_{i+1}, v_{i+2}, \dots,v_r)
      \leadsto
    (v_1,\dots, v_{i-1},\alpha, \beta, v_{i+2},\dots,v_r) \in \mathbb{N}^r \,,
  \]
where $\alpha + \beta = v_i + v_{i+1}$ and $|\alpha - \beta| < |v_i - v_{i+1}|$. \\
Equivalently, we can instead require that $\alpha + \beta = v_i + v_{i+1}$
and $\min\{v_i,v_{i+1}\} < \alpha,\beta < \max\{v_i,v_{i+1}\}$.
\end{itemize}

\noindent
Note that we use the new terminology ``interchange'' and ``compression'' here
for these more limited operations, to try to avoid any confusion
with the more flexible Robin Hood transfers.
We now state and prove the modified version
of the classic Muirhead result,
specialized to the transformation of a natural vector
into a homogenization of itself.
As an immediate consequence of Lemma~\ref{lem.conversion-to-homogen-version}
and the Muirhead theorem,
we see that any natural vector $\mathbf{v}$
majorizes any homogenization of itself.

\begin{lemma}[Homogenization Lemma]
	\label{lem.conversion-to-homogen-version} \quad \\
	Consider any $\mathbf{v}=(v_1,\dots, v_r)\in\mathbb{N}^r$,
	with component sum $s = v_1+\dots+v_r$, and average component value $a = s/r$.
	Let $q \in \bN$ be an integer such that the average $a$
	is contained in the closed interval $[@ q, q+1 @]$.
	Then by a finite sequence of interchanges and at most $r-1$ compressions,
	$\mathbf{v}$ can be transformed into a homogenization of itself,
	comprised of only $q$ and $q+1$ components, ordered arbitrarily.
	Writing $s=qr+t$ with $0 \le t \le r$,
	then in the homogenization of $\mathbf{v}$
	there are exactly $t$ copies of $q+1$
	and $r-t$ copies of $q$.
\end{lemma}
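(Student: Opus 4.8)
The plan is to induct on the length $r$, peeling off one correctly-valued coordinate at a time while tracking the number of compressions used. Before starting the induction I would record the one structural fact that makes the counting assertion essentially free: both interchange and compression preserve the component sum $s$. Hence if we ever reach a vector all of whose entries lie in $\{q,q+1\}$, that vector has component sum $s = qr+t$, and being composed only of $q$'s and $q+1$'s it must have exactly $t$ entries equal to $q+1$ and $r-t$ equal to $q$. So the entire content of the lemma reduces to: $\mathbf{v}$ can be driven to an all-$\{q,q+1\}$ state using interchanges and at most $r-1$ compressions.

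The base case $r=1$ is immediate: $\mathbf{v}$ is already of the required form and we use $0 = r-1$ compressions. For the inductive step, given $\mathbf{v}\in\bN^r$ with $s = qr+t$ and $0\le t\le r$, I would first install a target value in position $r$ --- a $q$ when $t<r$, and a $q+1$ when $t=r$. If some entry of $\mathbf{v}$ already equals that target value, I simply move it to position $r$ by interchanges, spending no compression. Otherwise I manufacture one with a single compression. In the case $t<r$ with no entry equal to $q$: since $s\ge qr$ we cannot have every $v_i\le q-1$, and since $s = qr+t<(q+1)r$ we cannot have every $v_i\ge q+1$; hence some $v_i\le q-1$ and some $v_j\ge q+1$ coexist. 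I bring them adjacent by interchanges and replace the pair by $(q,\, v_i+v_j-q)$; one checks this is a legal compression (the two new values lie strictly between $\min(v_i,v_j)$ and $\max(v_i,v_j)$), and it creates an entry equal to $q$, which I then slide to position $r$. The case $t=r$ is symmetric: some entry $\le q$ and some entry $\ge q+2$ must coexist, and compressing them produces a $q+1$.

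Now positions $1,\dots,r-1$ form a natural $(r-1)$-vector $\mathbf{w}$; its component sum is $s$ minus the peeled value, namely $q(r-1)+t$ if a $q$ was removed (legitimate since $t\le r-1$ in that case) or $q(r-1)+(r-1)$ if a $q+1$ was removed, and either way its average still lies in $[q,q+1]$. So the inductive hypothesis applies to $\mathbf{w}$ \emph{with the same $q$}, converting $\mathbf{w}$ into an all-$\{q,q+1\}$ vector via at most $(r-1)-1 = r-2$ compressions that act only within the first $r-1$ coordinates and leave position $r$ fixed. Adding the at most one compression used to set up position $r$ gives at most $r-1$ compressions overall, and the resulting length-$r$ vector has all entries in $\{q,q+1\}$; the count $t$ versus $r-t$ then follows from the sum-preservation remark above.

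The hard part will be the compression bookkeeping in the inductive step: pinning down that the ``manufacture a target value'' operation always costs exactly one compression --- which is precisely what forces the little averaging dichotomy, and which is where the hypotheses $0\le t$ and $t\le r$ actually get used --- together with checking that the recursive call is made with the \emph{same} $q$ so that the peeled-off coordinate fits correctly into the final $t$-versus-$(r-t)$ split. The remaining details (that adjacent interchanges generate all permutations, and the inequality checks certifying each compression is legal) are routine.
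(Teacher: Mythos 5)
Your proof is correct and follows essentially the same strategy as the paper's: induct on $r$, use interchanges plus at most one compression to install an entry equal to $q$ (or $q+1$) in the last position, and recurse on the remaining $(r-1)$-vector with the same $q$. The differences are cosmetic --- you run a single induction keyed on $t<r$ versus $t=r$ where the paper runs two (integer versus non-integer average), and your observation that the final $t$-versus-$(r-t)$ count is automatic from sum preservation is a clean shortcut.
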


\begin{proof}
  The proof will proceed by a pair of inductions,
  each on the length $r$ of the vector $\mathbf{v}$.
  The first induction considers only vectors $\mathbf{v}$
  whose average component value $a$ is an integer, either $q$ or $q+1$.
  In this case we will see that the homogenization of $\mathbf{v}$
  is the vector $(a, a, \dotsc , a)$.
  The second induction, which builds on the result of the first,
  considers the vectors $\mathbf{v}$ where $a$ is \emph{not} an integer,
  i.e., $q < a < q+1$.
  For this second case the homogenization will have both $q$ and $q+1$ entries.

  \smallskip
  \noindent
  \underline{Part 1} ($a$ is an integer) \\
  \hspace*{4mm} The case $r=1$ is trivial, and the case $r=2$ is also very easy --
  just do a single ($r-1 = 1$) compression to produce the homogenization $(a, a)$.
  So now assume that every $k$-vector $\mathbf{v} \in \bN^k$ with $k < r$
  and integer average $a$
  can be finitely transformed into a homogenization $(a, a, \dotsc , a)$
  using at most $k-1$ compressions.
  Let $\mathbf{v} \in \bN^r$ be any $r$-vector with average value $a = q$.
  If $\mathbf{v}$ already has any component equal to $q$,
  then finitely many interchanges move this component to the end,
  producing $\mathbf{v} \leadsto (\wt{\mathbf{v}}, q)$.
  If not, then one can find at least one component larger than $q$
  and one component smaller than $q$.
  By interchanges make these components adjacent,
  and then a compression on these two components can produce at least one $q$ component.
  Finitely many more interchanges then moves this newly-produced $q$ to the end,
  again producing $\mathbf{v} \leadsto (\wt{\mathbf{v}}, q)$.
  In either case, the $(r-1)$-vector $\wt{\mathbf{v}}$
  has average component value $\wt{a} = (s - q) / (r-1) = (ar - a) / (r-1) = a = q$.
  By the induction hypothesis the vector $\wt{\mathbf{v}}$
  can now be transformed by finitely many interchanges and at most $r - 2$ compressions
  into the vector $(a, a, \dotsc , a)$,
  and thus we have the desired conclusion for the $r$-vector $\mathbf{v}$.
  The same argument works, mutatis mutandis, for vectors $\mathbf{v} \in \bN^r$
  with average value $a = q+1$.

  \smallskip
  \noindent
  \underline{Part 2} ($a$ is \emph{not} an integer, i.e., $q < a < q+1$ and $0 < t < r$) \\
  \hspace*{4mm} Here the base case for the induction cannot be $r=1$;
  it must be $r=2$, with $a = q + \frac{1}{@2@}$.
  In this $r=2$ case, then,
  a single compression can produce the vector $(q, q+1)$, as desired.
  Again as in Part~1, now assume that every $k$-vector $\mathbf{v} \in \bN^k$ with $k < r$
  and average $q < a < q+1$  
  can be finitely transformed into a homogenized version containing only $q$ and $q+1$ components,
  using at most $k-1$ compressions.
  Let $\mathbf{v} \in \bN^r$ be any $r$-vector with non-integer average value $q < a < q+1$.
  If $\mathbf{v}$ already has any component equal to $q$ or $q+1$,
  then finitely many interchanges moves this component to the end,
  producing $\mathbf{v} \leadsto (\wt{\mathbf{v}}, q)$ or $(\wt{\mathbf{v}}, q+1)$.
  If no component of $\mathbf{v}$ is $q$ or $q+1$,
  then there must be at least one component larger than $q+1$
  and one component smaller than $q$.
  Make these components adjacent by interchanges,
  and compress these two components
  to produce at least one $q$ or $q+1$ entry
  (possibly both, or perhaps even two $q$'s or two $q+1$'s).
  Move this $q$ or $q+1$ entry to the end by interchanges,
  once again transforming $\mathbf{v}$
  into the form $(\wt{\mathbf{v}}, q)$ or $(\wt{\mathbf{v}}, q+1)$.
  The $(r-1)$-vector $\wt{\mathbf{v}}$ now has a component average
  \[ \wt{a} \,=\,
        \begin{cases}
          \frac{\,s - q\,}{r-1}
            \,=\, \frac{\,qr + t - q\,}{r-1}
            \,=\, q + \frac{t}{\,r-1\,} \,\le\, q+1
         & \text{\,if\; $\mathbf{v} \leadsto (\wt{\mathbf{v}}, q)$,} \\[5pt]
          \frac{\,s - (q+1)\,}{r-1}
            \,=\, \frac{\,qr + t - q - 1\,}{r-1}
            \,=\, q + \frac{t-1}{\,r-1\,} \,\ge\, q
         & \text{\,if\; $\mathbf{v} \leadsto (\wt{\mathbf{v}}, q+1)$.}
        \end{cases}
  \]
  If $\wt{a}$ is $q$ or $q+1$, then we are done by Part~1.
  Otherwise $q < \wt{a} < q+1$,
  and we are done by the induction hypothesis of Part~2.

\smallskip
  This completes the inductive argument
  that a homogenization of $\mathbf{v}$ can always be achieved.
  The expression of the component sum $s$ in the form $s=qr+t$ with $0 \le t \le r$
  then uniquely determines the number of $q$ and $q+1$ entries
  in the homogenization.
  A final sequence of interchanges
  can put the entries of the homogenization into any desired order,
  thus completing the proof of the lemma.
\end{proof}


With the Homogenization Lemma in hand,
we return to the first phase of our quasi-triangular realization process,
the ``un-diagonalizing'' of a Smith form.
The next result addresses the combinatorial essence of this problem,
showing how it is possible to take a multiset of irreducible polynomials
and distribute them among the entries of a vector
in a way that minimizes the degree differences,
and at the same time produces a viable configuration
for the diagonal vector of an upper triangular un-diagonalized Smith form.
The corollary immediately following
shows that the target diagonal produced by Lemma~\ref{Stacking}
is in fact reachable from the Smith form
via the type of triangularity-preserving unimodular transformations
developed in Corollary~\ref{cor.unimod-trans}.



\begin{lema}
  \label{Stacking}
	Let $\,\bF$ be an arbitrary field,
	and consider a finite multiset $\cM$ of $\,\bF$-irreducible polynomials
	of degree less than or equal to $k$.
	Also let $n$ be any positive integer.
	Let $\cM = \cF_1 \sqcup \dotsb \sqcup \cF_k$
	be the coprime partition of $\cM$
	in which $\cF_j$ contains all of the $\,\bF$-irreducible factors in $\cM$ of degree $j$.	
	Then there exists a polynomial $n$-vector $\,\mathbf{p} (\la)$
	with all nonzero entries
	such that
	\begin{itemize}
	  \item the multiset of all of the $\,\bF$-irreducible factors
	        of all the entries of $\,\mathbf{p} (\la)$ is exactly $\cM$,
	  \item the degree vector
	        $\,\deg \mathbf{p} :=\,\bigl( \deg p_1(\la), \dotsc , \deg p_n(\la) \bigr)$
	        is $k$-homogeneous,
	  \item and each of the factor-counting vectors $|\mathbf{p}(\la)|_{\scFj}$
	        for $j = 1,\dotsc,k$ is $1$-homogeneous.
	\end{itemize}
\end{lema}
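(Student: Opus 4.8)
The plan is to construct $\mathbf{p}(\la)$ one irreducible-degree class at a time, using the Homogenization Lemma (Lemma~\ref{lem.conversion-to-homogen-version}) to control each factor-counting vector separately, and then to verify that the resulting degree vector is $k$-homogeneous essentially for free. Concretely, I would proceed as follows. For each $j = 1, \dotsc, k$, let $N_j := |\cF_j|$ be the number of $\bF$-irreducible factors in $\cM$ of degree $j$ (counted with multiplicity). Applying the Homogenization Lemma to (any natural vector with component sum $N_j$ and length $n$, e.g.\ $(N_j, 0, \dotsc, 0)$), we obtain a $1$-homogeneous natural vector $\mathbf{c}^{(j)} = (c^{(j)}_1, \dotsc, c^{(j)}_n) \in \bN^n$ with $\sum_i c^{(j)}_i = N_j$ and each $c^{(j)}_i \in \{q_j, q_j+1\}$ where $N_j = q_j n + t_j$, $0 \le t_j < n$. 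Then assign to the $i$-th entry of $\mathbf{p}(\la)$ the product of $c^{(j)}_i$ of the degree-$j$ irreducibles from $\cF_j$ (distributing the actual irreducibles arbitrarily, subject to using each element of the multiset $\cM$ exactly once — this is possible precisely because $\sum_i c^{(j)}_i = N_j = |\cF_j|$). Define $p_i(\la) := \prod_{j=1}^k \bigl(\text{the chosen degree-}j\text{ factors in slot }i\bigr)$, with $p_i(\la) := 1$ if no factors are assigned, though in fact I'd want to argue all entries are nonzero, which is automatic since a product of monic polynomials (possibly empty, giving $1$) is never the zero polynomial.

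The first and third bullets are then immediate by construction: the multiset of all $\bF$-irreducible factors across all entries is $\bigsqcup_j \cF_j = \cM$, and $|\mathbf{p}(\la)|_{\scFj} = \mathbf{c}^{(j)}$ is $1$-homogeneous. The remaining work is the second bullet, $k$-homogeneity of $\deg \mathbf{p}$. Here $\deg p_i = \sum_{j=1}^k j \cdot c^{(j)}_i$. For any two indices $i, i'$ we have
\[
  \deg p_i - \deg p_{i'} \;=\; \sum_{j=1}^k j \,(c^{(j)}_i - c^{(j)}_{i'}),
\]
and since each $\mathbf{c}^{(j)}$ is $1$-homogeneous, $c^{(j)}_i - c^{(j)}_{i'} \in \{-1, 0, 1\}$. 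A crude bound gives $|\deg p_i - \deg p_{i'}| \le \sum_{j=1}^k j = k(k+1)/2$, which is \emph{not} good enough — the target is $k$, not $k(k+1)/2$. So the main obstacle is sharpening this: I need to choose the homogenizations $\mathbf{c}^{(j)}$ in a \emph{coordinated} way across the different $j$, not independently, so that the $+1$'s and $-1$'s in different degree classes land in a pattern that makes the weighted sum telescope or collapse. The natural device is to think of the total number of degree-$j$ factors as contributing $N_j$ "units of weight $j$", and to distribute the \emph{total degree} $\sigma_{\cM} := \sum_j j N_j = \sum_i \deg p_i$ as evenly as possible first, at the level of degrees, and only then to realize that even distribution by an actual assignment of irreducibles — i.e.\ run the Homogenization Lemma on the degree vector, and lift. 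That is, I expect the correct argument first homogenizes $\deg \mathbf{p}$ (forcing each $\deg p_i$ into $\{\lfloor \sigma_{\cM}/n\rfloor, \lceil \sigma_{\cM}/n\rceil\}$, which is trivially $1$-homogeneous, hence $k$-homogeneous), and \emph{separately} arranges that within each degree class the count is spread to within $1$; the tension is that these two requirements interact, and making both hold simultaneously is exactly the combinatorial content the introduction flags as "a new combinatorial lemma on the partitioning of integer multisets."

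Assuming such a coordinated assignment exists (this is the crux and where I would invest the real effort — plausibly by a greedy round-robin: sort all $N := \sum_j N_j$ irreducibles and deal them cyclically into the $n$ slots in order of \emph{decreasing degree}, which keeps both the per-degree counts and the running degree sums balanced to within the required tolerance), the three bullets follow and the lemma is proved. A secondary, minor point to check is the edge case $n > N$ (more slots than factors), where some $p_i = 1$; one must confirm $k$-homogeneity still holds, i.e.\ that no $\deg p_i$ exceeds $k$ — which it does as long as the total degree $\sigma_{\cM} \le kn$ is distributed with parts at most... hmm, actually this needs the balanced distribution too, so it's subsumed in the main step. I would close by noting that the auxiliary nonzero-entries condition is trivially satisfied and that the arbitrariness in distributing irreducibles within a degree class will be exploited later (in the corollary following) to match a prescribed target reachable from the Smith form.
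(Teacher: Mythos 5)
Your diagnosis of why the naive approach fails is exactly right (independent per-degree homogenizations only give the bound $\sum_{j=1}^k j = k(k+1)/2$), and the ``greedy round-robin'' you then gesture at --- sort all of $\cM$ in decreasing degree order and deal cyclically into the $n$ slots --- is precisely the paper's construction (the paper pads $\cM$ with copies of $1$ to make $|\cM|$ a multiple of $n$, writes the sorted list in rows of length $n$, and takes column products, which is the same deal). However, the proposal has a genuine gap: the entire mathematical content of the lemma is the \emph{verification} that this one deal simultaneously achieves both homogeneity properties, and you explicitly leave that as ``the crux where I would invest the real effort'' and then assume it. Both verifications are concrete and neither is free. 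For the degree vector: because the list is sorted decreasingly, the slot degrees satisfy $\deg p_1 \ge \dotsb \ge \deg p_n$, and the difference $\deg p_1 - \deg p_n$ telescopes into $\deg a_1$ minus a sum of nonnegative terms, hence is at most $k$. For the factor-counting vectors: the degree-$j$ elements occupy a \emph{contiguous} run of the sorted list, so under the cyclic deal each slot receives either $c$ or $c+1$ of them; making this precise requires a short case analysis on where the run begins and ends relative to the row boundaries. Without these two arguments the proposal is a construction plus a conjecture that it works.

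A secondary error worth flagging: your suggested intermediate step --- first force $\deg \mathbf{p}$ into $\{\lfloor \sigma_{\cM}/n\rfloor, \lceil \sigma_{\cM}/n\rceil\}$ and then realize that distribution by assigning irreducibles --- is not achievable in general. The attainable entry degrees are constrained to be sums of the available factor degrees, and Remark~\ref{rem.k-homogeneity-is-best-possible} gives the obstruction explicitly: if $\cM$ consists only of degree-$k$ irreducibles and there are fewer of them than $n$, the degree vector cannot be made better than $k$-homogeneous. So the two requirements cannot be decoupled in the order you propose; the single sorted deal is what resolves the tension.
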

\begin{proof}
 For notational convenience, we first append
enough copies of the constant polynomial $1$ to the multiset $\cM$
so that the total number $m$ of elements in $\cM$
is a multiple of $n$, i.e., $|\cM| =: m = (q+1)n$ with $q \ge 0$.
The presence of these copies of $1$ will have no impact
on the vector $\mathbf{p}$ that is constructed,
but the exposition will be simplified by including them.

Next order the elements of $\cM$ into a list $\cL$
so that the sequence of degrees is decreasing.
That is, let $\cL = \bigl[ a_1(\la), a_2(\la), \dotsc, a_m(\la) \bigr]$,
where $\deg a_\ell \geq \deg a_{\ell+1}$ for every $\ell=1,2,\dotsc,m-1$.
Now partition the list $\cL$ into $q+1$ contiguous sublists of $n$ polynomials each,
and stack them up as in the following diagram. \,\,(Keep in mind that $qn+n = m$.)
	
\begin{figure}[h]\label{fig.stacking1}\centering
\begin{tikzpicture}
\node at (0,0) {$\begin{array}{cc@{\hspace{5mm}}l@{\hspace{5mm}}lll}
	\cL_0 & = & \;[\, a_1, & \;a_2, & \dotsc & \;\;,a_n ] \\
	\cL_1 & = & [\, a_{n+1}, & a_{n+2}, & \dotsc & \;\,,a_{2n}\,] \\
	& \vdots & & & & \\
	\cL_j & = & \![\, a_{jn + 1}, & a_{jn + 2}, & \dotsc & \,,a_{jn + n} \,] \\
	& \vdots & & & & \\
	\cL_q & = & \![\, a_{qn+1}, & a_{qn+2}, & \dotsc & ,a_{qn+n}\,]
	\end{array}$};
\draw[blue,thick] (-1.2,-0.2) ellipse (0.65cm and 2.6cm);
\node at (-1.2,-2) {$\color{red}v_1(\la)$};
\draw[blue,thick] (0.3,-0.2) ellipse (0.65cm and 2.6cm);
\node at (0.3,-2) {$\color{red}v_2(\la)$};
\node at (1.325,-2) {$\dotsc$};
\draw[blue,thick] (2.55,-0.2) ellipse (0.8cm and 2.6cm);
\node at (2.6,-2) {$\color{red}v_n(\la)$};
\end{tikzpicture}
\end{figure}

\noindent
The $n$ component polynomials of the desired vector $\mathbf{p} (\la)$
are formed by taking the column-wise products indicated by the blue ovals.
 For example,
$p_1(\la) := a_1(\la) a_{n+1}(\la) \dotsb a_{qn+1}(\la)$ \,,\,
$p_2(\la) := a_2(\la) a_{n+2}(\la) \dotsb a_{qn+2}(\la)$ \,, etc.
It is clear by construction that the multiset
of all of the $\,\bF$-irreducible factors
of all the entries of $\,\mathbf{p} (\la)$ is exactly $\cM$.
All that remains is to see why this $\mathbf{p} (\la)$
has the other two desired properties.

To see why the degree vector $\deg \mathbf{p}$ is $k$-homogeneous,
first observe that the component degrees are decreasing,
i.e., $\deg p_1 \ge \deg p_2 \ge \dotsb \ge \deg p_n$.
This follows immediately from the elements of the list $\cL$
being in decreasing order.
Consequently the largest degree difference in $\mathbf{p}$
will be between the first and last components $p_1(\la)$ and $p_n(\la)$.
But we have
\begin{align*}
    \deg p_1 - \deg p_n \; & = \; (\deg a_1 + \deg a_{n+1} + \dotsb + \deg a_{(qn+1)})
                                - (\deg a_n + \deg a_{2n} + \dotsb + \deg a_{(qn+n)}) \\
		                   & = \; \deg a_1 - \bigl[(\deg a_n - \deg a_{n+1}) + (\deg a_{2n} - \deg a_{2n+1})
		                        + \dotsb
		                        + (\deg a_{qn} - \deg a_{qn+1}) + \deg a_m \bigr] \\
                           & \le \; k \,,
\end{align*}
since $\deg a_1(\la) \le k$,
and the expression inside the brackets is a sum of non-negative numbers
(due to the list $\cL$ being in decreasing-degree order).
Hence the degree vector $\deg \mathbf{p}(\la)$ is $k$-homogeneous.

 Finally, consider the factor-counting vectors $|\mathbf{p}(\la)|_{\scFj}$.
To see why these vectors are all $1$-homogeneous,
observe that because the irreducible polynomials are listed in $\cL$
in order of decreasing degree,
the degree $j$ irreducibles in $\cL$ form a contiguous segment of $\cL$,
and hence are distributed among consecutive sublists $\cL_{\delta}$
in a manner analogous to the green entries in the following diagram.
On the other hand, if $\alpha < \beta$,
then

\begin{figure}[h]\label{fig.stacking2}\centering
\begin{tikzpicture}
\node at (0,0) {$\begin{array}{cclllllll}
	& \vdots & & & & & & & \\
	\cL_{\alpha} & = & \{a_{\alpha n+1} \,, & \dotsc & \,, \boxed{\color{green}a_{\alpha n+i}} \,,
                     & \color{green}\dotsc & ,\,\color{green}a_{\alpha n+ \ell} \,,
                     & \color{green}\dotsc & ,\;\color{green}a_{\alpha n+n}\} \\
	\cL_{\alpha + 1} & = & \{\color{green}a_{(\alpha+1) n+1}, & \color{green}\dotsc
	                 & \,,\,\color{green}a_{(\alpha+1) n+i} , & \color{green}\dotsc
	                 & ,\color{green}a_{(\alpha+1) n+ \ell} , & \color{green}\dotsc
	                 & ,\;\color{green}a_{(\alpha+1) n+n}\} \\
	& \vdots & & & & & & & \\
	\cL_{\beta} & = & \{\color{green}a_{\beta n+1} \,,  & \color{green}\dotsc
	                & \,,\,\color{green}a_{\beta n+i} \,, & \color{green}\dotsc
	                & ,\boxed{\color{green}a_{\beta n+ \ell}} \,, & \dotsc & ,\;a_{\beta n+n}\} \\
	& \vdots & & & & & & & \\
	\end{array}$};
\draw[blue,thick] (-3.6,0) ellipse (0.95cm and 2.5cm);
\node at (-3.6,-2) {$\color{red}v_1$};
\node at (-2.05,-2) {$\dotsc$};
\draw[blue,thick] (-0.65,0) ellipse (0.95cm and 2.5cm);
\node at (-0.65,-2) {$\color{red}v_i$};
\node at (0.7,-2) {$\dotsc$};
\draw[blue,thick] (2.2,0) ellipse (0.95cm and 2.5cm);
\node at (2.2,-2) {$\color{red}v_{\ell}$};
\node at (3.4,-2) {$\dotsc$};
\draw[blue,thick] (5.1,0) ellipse (0.95cm and 2.5cm);
\node at (5.1,-2) {$\color{red}v_n$};
\end{tikzpicture}
\end{figure}

\noindent
The first instance of a degree $j$ irreducible
is the boxed entry $a_{\alpha n+i}$ with $1 \le i \le n$ in some sublist $\cL_{\alpha}$,
and the last degree $j$ irreducible is the boxed entry $a_{\beta n + \ell}$,
where $\alpha \le \beta$.

If $\alpha = \beta$, then $i \le \ell$,
and all of the degree $j$ irreducibles in $\cL$ are in the sublist $\cL_{\alpha}$,
so the factor-counting vector $|\mathbf{p}(\la)|_{\scFj}$
has only $0$ and $1$ entries;
this certainly constitutes a $1$-homogeneous vector.
On the other hand,
if $\alpha < \beta$ then we may have either $\ell < i-1$, $\ell = i-1$, or $\ell > i-1$.
Consider each of these possibilities in turn:
\begin{itemize}
  \item \underline{$\ell < i-1$}:
        In this case the combined contribution of the degree $j$ entries
        in $\cL_{\alpha}$ and $\cL_{\beta}$ to $|\mathbf{p}(\la)|_{\scFj}$ is
        the $n$-vector
        \[
          (1, 1, \dotsc , \underbrace{1}_{\ell}, 0, \dotsc , 0, \underbrace{1}_{i}, \dotsc , 1) \,,
        \]
        while the contribution of the sublists $\cL_{\delta}$
        with $\alpha < \delta < \beta$
        is a constant vector with $n$ entries all equal to $\beta - \alpha - 1$.
        The sum of these two vectors is $|\mathbf{p}(\la)|_{\scFj}$,
        and is clearly $1$-homogeneous.
  \item \underline{$\ell = i-1$}:
        Now the contribution to $|\mathbf{p}(\la)|_{\scFj}$
        from $\cL_{\alpha}$ and $\cL_{\beta}$ combined is just
        the constant $n$-vector $(1, 1, \dotsc, 1)$,
        which together with the contribution from the sublists
        between $\cL_{\alpha}$ and $\cL_{\beta}$
        gives a constant vector for $|\mathbf{p}(\la)|_{\scFj}$,
        with entries all equal to $\beta - \alpha$.
        This is certainly $1$-homogeneous, indeed even $0$-homogeneous.
  \item \underline{$\ell > i-1$}:
        In this final case the combined contribution of the degree $j$ entries
        in $\cL_{\alpha}$ and $\cL_{\beta}$ to $|\mathbf{p}(\la)|_{\scFj}$ is
        the $n$-vector
        \[
          (1, \dotsc , 1, \underbrace{2}_i, 2, \dotsc , \underbrace{2}_{\ell}, 1, \dotsc , 1) \,.
        \]
        Together with the constant vector contribution
        from the sublists between $\cL_{\alpha}$ and $\cL_{\beta}$,
        we again see that $|\mathbf{p}(\la)|_{\scFj}$ is $1$-homogeneous.
\end{itemize}
\end{proof}

\begin{remark} \label{rem.k-homogeneity-is-best-possible}
\rm
  Note that $k$-homogeneity for $\mathbf{p} (\la)$ in Lemma~\ref{Stacking}
  is the best possible general result here,
  as illustrated by the following simple example.
  Suppose $\cM$ contains \emph{only} irreducible polynomials of degree $k$,
  say $\ell$ of them, and $n > \ell$.
  Then clearly the $n$-vector $\mathbf{p} (\la)$ that minimizes the degree differences
  has $|\mathbf{p}(\la)|_{\scFk} = (k, k, \dotsc , k, 0, \dotsc, 0)$,
  and $k$-homogeneity cannot be improved upon in this situation.
\end{remark}

We return to our running illustration of the results of this paper,
as begun earlier in Examples~\ref{ex.intro} and \ref{ex.factor-counting}.
The next example demonstrates the application of Lemma~\ref{Stacking}
to this data.

\begin{ejem}\label{ex.stacking}
Recall the $\,6 \times 6\,$ Smith form from Example \ref{ex.intro},
i.e.,
\[
  S(\la) = \diag\{1,\phi\psi,\chi\phi\psi,\chi^2\phi\psi^2,\chi^3\phi\psi^2,\chi^3\phi^3\psi^4\} \,,
\]
and consider the coprime partition of the list of irreducible divisors
as in Example \ref{ex.factor-counting},
with
\[
  \cF_1 \,=\, \{\underbrace{\tcg{\psi,\psi,\psi,\psi,\psi,\psi,\psi,\psi,\psi,\psi}}_{10}\} \,,
  \quad \;
  \cF_2 \,=\, \{\underbrace{\tcb{\phi,\phi,\phi,\phi,\phi,\phi,\phi}}_{7}\} \,,
  \quad \;
  \cF_4 \,=\, \{\underbrace{\tcr{\chi,\chi,\chi,\chi,\chi,\chi,\chi,\chi,\chi}}_{9}\} \,,
\]
and an $\cF_3$ that is empty.
 Following the proof of Lemma \ref{Stacking} gives us five sublists,
each of length $6$:
\[\begin{array}{lclccccr}
  \cL_0 & = & \{\color{red}{\chi},&\color{red}{\chi},&\color{red}{\chi},
            &\color{red}{\chi},&\color{red}{\chi},&\color{red}{\chi}\} \\
  \cL_1 & = &\{\color{red}{\chi},&\color{red}{\chi},&\color{red}{\chi},
            &\color{blue}{\phi},&\color{blue}{\phi},&\color{blue}{\phi}\} \\
  \cL_2 & = &\{\color{blue}{\phi},&\color{blue}{\phi},&\color{blue}{\phi},
            &\color{blue}{\phi},&\color{green}{\psi},&\color{green}{\psi}\} \\
  \cL_3 & = &\{\color{green}{\psi},&\color{green}{\psi},&\color{green}{\psi},
            &\color{green}{\psi},&\color{green}{\psi},&\color{green}{\psi}\} \\
  \cL_4 & = &\{\color{green}{\psi},&\color{green}{\psi},&1,&1,&1,&1\} \,.
\end{array}\]
Then taking the products going down the columns,
we get $\mathbf{p} (\la) = \bigl( p_1(\la), \dotsc, p_6(\la) \bigr)$ with
\[
  \begin{array}{llllll}
    p_1(\la) = \chi^2\phi\psi^2 \,,  &\; p_2(\la) = \chi^2\phi\psi^2 \,,  &\; p_3(\la) = \chi^2\phi\psi \,,
    &\; p_4(\la) = \chi\phi^2\psi \,,  &\; p_5(\la) = \chi\phi\psi^2 \,,  &\; p_6(\la) = \chi\phi\psi^2 \,.
  \end{array}
\]
The vector $\mathbf{p} (\la)$ has factor-counting vectors
\begin{align*}
  |\mathbf{p}(\la)|_{\scFo} & = (2,2,1,1,2,2) \\
  |\mathbf{p}(\la)|_{\scFt} & = (1,1,1,2,1,1) \\
  |\mathbf{p}(\la)|_{\scFth} & = (0,0,0,0,0,0) \\
  |\mathbf{p}(\la)|_{\scFf} & = (2,2,2,1,1,1) \,,
\end{align*}
which are all $1$-homogeneous,
and a $4$-homogeneous degree vector $\deg \mathbf{p} = (12, 12, 11, 9, 8, 8)$,
just as guaranteed by Lemma~\ref{Stacking}.
\end{ejem}

We now have the tools needed to reach the next milestone
in our construction of a quasi-triangular realization
of given finite spectral data.
The following corollary is the main result to carry forward
into the next stages of this construction.

\begin{coro}[Un-diagonalizing the Smith form] \label{cor.un-diagonalization} \quad \\
  Let $\,\bF$ be an arbitrary field,
  and consider any regular $\nbyn$ diagonal polynomial matrix $S(\la)$ over $\,\bF$
  that is in Smith form.
  Let $\cM$ be the multiset of all of the $\,\bF$-irreducible factors
  of all of the invariant polynomials in $S(\la)$,
  and let $k$ be the maximum degree among all elements of $\cM$.
  Consider also the coprime partition $\cM = \cF_1 \sqcup \dotsb \sqcup \cF_k$,
  in which each $\cF_j$ contains all of the $\,\bF$-irreducible factors in $\cM$ of degree $j$.
  Then there is an upper triangular polynomial matrix $T(\la)$
  that is unimodularly equivalent to $S(\la)$,
  with diagonal degree vector $\,\deg \bigl( \diag T(\la) \bigr)$
  that is $k$-homogeneous,
  and such that each diagonal factor-counting vector $\mathbf{d}_{\scFj}(T)$
  is $1$-homogeneous.
\end{coro}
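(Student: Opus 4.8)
The plan is to reduce Corollary~\ref{cor.un-diagonalization} directly to Lemma~\ref{Stacking} plus Corollary~\ref{cor.unimod-trans}. First I would apply Lemma~\ref{Stacking} to the multiset $\cM$ of all $\bF$-irreducible factors appearing in the invariant polynomials of $S(\la)$, with the given value of $n$ and the degree-based coprime partition $\cM = \cF_1 \sqcup \dotsb \sqcup \cF_k$. This produces a polynomial $n$-vector $\mathbf{p}(\la) = (p_1(\la),\dotsc,p_n(\la))$ whose multiset of $\bF$-irreducible factors is exactly $\cM$, whose degree vector $\deg \mathbf{p}$ is $k$-homogeneous, and each of whose factor-counting vectors $|\mathbf{p}(\la)|_{\scFj}$ is $1$-homogeneous. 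Setting $D(\la) := \diag(p_1(\la),\dotsc,p_n(\la))$ gives a diagonal matrix with the desired diagonal-vector properties --- it only remains to show $D(\la)$ (after possibly reordering the diagonal entries into a ``nested'' arrangement) can be reached from $S(\la)$ by triangularity-preserving unimodular equivalence.

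The bridge between $S(\la)$ and the target diagonal is that the two matrices have the same multiset of $\bF$-irreducible factors counted with multiplicity along the diagonal, hence the same determinant up to a unit, but not necessarily the same Smith form --- so I cannot simply invoke uniqueness of the Smith form. Instead I would argue directly with the transfer tool. The key observation is that Corollary~\ref{cor.unimod-trans}, applied repeatedly to adjacent pairs, realizes precisely the ``interchange'' and ``compression'' operations on factor-counting vectors from the Homogenization Lemma, \emph{done simultaneously for all the partition classes $\cF_j$}. More carefully: starting from $S(\la)$ (viewed as a trivial upper triangular matrix), I would bring the diagonal factor-counting vectors $\mathbf{d}_{\scFj}(S)$ to their $1$-homogeneous forms one class at a time. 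When working on class $\cF_j$, I restrict the coprime partition to $\cF_j$ versus its complement $\cG_j := \cM \setminus \cF_j$; Corollary~\ref{cor.unimod-trans} then lets me move $\cF_j$-factors between adjacent diagonal entries without touching any $\cG_j$-factor, in particular without touching the already-homogenized $\cF_{j'}$-counts for $j' \ne j$ --- this is exactly why processing the classes sequentially does not undo earlier work. By the Homogenization Lemma, a finite sequence of such adjacent moves suffices to make $\mathbf{d}_{\scFj}(T)$ a prescribed $1$-homogeneous vector (any permutation of $(q_j,\dotsc,q_j,q_j+1,\dotsc,q_j+1)$ where $|\cF_j| = q_j n + t_j$).

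The one genuine subtlety --- and the step I expect to be the main obstacle --- is coordinating the \emph{placements}, not just the counts. After homogenizing each $\mathbf{d}_{\scFj}$ I get a triangular $T(\la)$ whose diagonal has the right factor-count profile in each class, but the resulting diagonal-degree vector is $\sum_j j \cdot \mathbf{d}_{\scFj}(T)$, and for this sum to be $k$-homogeneous I need the large entries of the various $\mathbf{d}_{\scFj}$'s to line up in the same positions --- which is precisely the content of the column-stacking construction in the proof of Lemma~\ref{Stacking}. So the clean way to do this is: let $\mathbf{p}(\la)$ be the specific vector built in Lemma~\ref{Stacking}, read off its factor-counting vectors $\mathbf{c}^{(j)} := |\mathbf{p}(\la)|_{\scFj}$ (these are $1$-homogeneous and their weighted sum is $k$-homogeneous), and then use the interchange freedom in the Homogenization Lemma to drive each $\mathbf{d}_{\scFj}(T)$ to \emph{exactly} $\mathbf{c}^{(j)}$ (same ordering), processing $j = 1, \dotsc, k$ in turn. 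Once all $k$ classes have been matched, the diagonal of $T(\la)$ is a permutation of $\diag \mathbf{p}(\la)$ with $\mathbf{d}_{\scFj}(T) = \mathbf{c}^{(j)}$ for every $j$; hence $\deg(\diag T) = \sum_j j\,\mathbf{c}^{(j)} = \deg \mathbf{p}$ is $k$-homogeneous, and each $\mathbf{d}_{\scFj}(T)$ is $1$-homogeneous. Since $T(\la)$ was obtained from $S(\la)$ by a finite product of the unimodular equivalences of Corollary~\ref{cor.unimod-trans}, it is unimodularly equivalent to $S(\la)$, and the proof is complete. (A small check to record along the way: each step of Corollary~\ref{cor.unimod-trans} preserves upper-triangularity and regularity, and leaves the $\cG_j$-parts of every diagonal entry untouched, so the $\cF_{j'}$-counts for $j' \ne j$ are genuinely frozen while we work on class $j$.)
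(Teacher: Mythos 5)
Your proposal is correct and follows essentially the same route as the paper's own (very terse) proof: use Lemma~\ref{Stacking} to produce the target diagonal $\mathbf{p}(\la)$, then realize the transition from $\diag S(\la)$ to $\mathbf{p}(\la)$ via the Homogenization Lemma implemented through Corollary~\ref{cor.unimod-trans}. You are in fact more explicit than the paper about the one point it glosses over --- that the Homogenization Lemma must be applied class by class with the partition $\cF_j \sqcup (\cM \setminus \cF_j)$, driving each count vector to the \emph{specific} target $|\mathbf{p}(\la)|_{\scFj}$ so the placements align --- and that elaboration is sound.
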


\begin{proof}
  Use the given multiset $\cM$ of $\bF$-irreducible polynomials
  as input to Lemma~\ref{Stacking}.
  The output vector $\mathbf{p} (\la)$ from that Lemma
  is now the target diagonal for the desired upper triangular $T(\la)$.
  Since each factor-counting vector $|\mathbf{p}(\la)|_{\scFj}$
  is $1$-homogeneous,
  the Homogenization Lemma~\ref{lem.conversion-to-homogen-version}
  guarantees that the transition from the vector $\diag S(\la)$
  to the vector $\mathbf{p} (\la)$
  can be achieved using a finite number of compressions and interchanges.
  But Corollary \ref{cor.unimod-trans} gives us the means
  to implement all of these compressions and interchanges
  as unimodular transformations applied to $S(\la)$.
  Doing this then converts $S(\la)$
  into the desired upper triangular $T(\la)$.
%
\end{proof}

\subsection{A combinatorial lemma} \label{subsect.combinatorial-lemma}

In this short section we focus on establishing a new combinatorial property 
of ``tightly packed'' \emph{integer} multisets,
that is, multisets that contain more, perhaps even many more elements
than the width of the interval into which they are packed.
This property will enable us to permute the diagonal entries
of the $T(\la)$ from Corollary~\ref{cor.un-diagonalization}
in preparation for the final phase
of our quasi-triangular realization construction.
Note that for ease of expression,
in this section we use the word ``list''
as a synonym for multiset;
however, nothing about any ordering of these lists
is of any relevance for the development here.

\begin{lemma}[Homogeneous Partitioning Property] \label{lem.homog-partition} \quad \\
  Let $\cI = [\,j, j+k\,]$ be a closed interval with \emph{integer} endpoints and length $k \ge 1$,
  and consider a list $\mathcal{L} = \{n_1,n_2,\dotsc,n_m\}$ of $m$ integers,
  all in $\cI$.
  If the average value of all of the entries in $\cL$
  is an \emph{integer} $\mu \in \cI$,
  then $\cL$ can be partitioned into sublists $\cS_1,\cS_2,\dotsc,\cS_{\ell}$
  such that the number of entries in each $\cS_i$ does not exceed $k$,
  and the average value of each sublist is exactly $\mu$.
\end{lemma}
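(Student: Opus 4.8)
The plan is to reduce the whole statement to a single \emph{extraction step} and then iterate via strong induction on $m=|\cL|$. If $m\le k$ there is nothing to do: take $\ell=1$ and $\cS_1=\cL$, whose average is $\mu$ by hypothesis. If $m>k$, everything comes down to producing one sublist $\cS\subseteq\cL$ with $1\le|\cS|\le k$ and average exactly $\mu$. Once we have it, note that the complement $\cL\setminus\cS$ is nonempty (its size is $m-|\cS|\ge m-k>0$), lies in $\cI$, and has sum $m\mu-|\cS|\mu=(m-|\cS|)\mu$, hence again has integer average $\mu$; so by the induction hypothesis it partitions into sublists of size at most $k$ with average $\mu$, and adjoining $\cS$ completes the step.

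\textbf{The extraction step.} To find such an $\cS$ I would first normalize by subtracting $\mu$: set $y_i=n_i-\mu$, so each $y_i$ lies in $[-a,b]$ with $a:=\mu-j\ge 0$, $b:=j+k-\mu\ge 0$, $a+b=k$, and $\sum_i y_i=0$. A sublist has average $\mu$ exactly when the corresponding $y$'s sum to $0$, so it suffices to find a nonempty sub-multiset of $\{y_1,\dots,y_m\}$ of size $\le k$ summing to $0$. If some $y_i=0$, take $\cS=\{n_i\}$, of size $1\le k$. Otherwise every $y_i\ne 0$, which forces $a\ge 1$ and $b\ge 1$ (if $a=0$ or $b=0$ all $y_i$ would have a single sign, impossible since they sum to $0$ and are not all zero); hence $\{-a+1,-a+2,\dots,b\}$ is a set of exactly $a+b=k$ integers containing $0$. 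Now order the elements greedily into $x_1,\dots,x_m$: having placed $x_1,\dots,x_i$ with partial sum $P_i=x_1+\dots+x_i$, let $x_{i+1}$ be any remaining positive element if $P_i\le 0$, and any remaining negative element if $P_i>0$. Such an element always exists for $i<m$, since the remaining entries sum to $-P_i$: if $P_i<0$ (resp.\ $P_i>0$) there is a remaining positive (resp.\ negative) entry, and if $P_i=0$ the nonempty remainder consists of nonzero integers summing to $0$, so it contains entries of both signs. A three-case check ($P_i=0$; $-a+1\le P_i\le -1$; $1\le P_i\le b$) shows that the invariant $P_i\in\{-a+1,\dots,b\}$ is preserved under each greedy choice. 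Since that set has only $k$ elements, the $k+1$ partial sums $P_0,P_1,\dots,P_k$ (all defined, as $m>k$) cannot be pairwise distinct, so $P_s=P_t$ for some $0\le s<t\le k$; then $\cS:=\{x_{s+1},\dots,x_t\}$ has its $y$-values summing to $P_t-P_s=0$ and size $t-s\le k$, which is exactly what we need.

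\textbf{Main obstacle and a sanity check.} The delicate point is the simultaneous verification of the greedy invariant and the availability claim, especially at the moment $P_i=0$ is first re-encountered, where one must use that in this branch \emph{every} element of $\cL$ is nonzero in order to guarantee a usable next element while staying inside the $k$-element window $\{-a+1,\dots,b\}$. Everything else (the base case, the transfer of the ``average $\mu$'' property to the complement, and the final pigeonhole) is routine. As a sanity check, on a list consisting only of copies of the two endpoints $j$ and $j+k$ with integer average $\mu$, the greedy ordering drives the running sum back to $0$ after a balanced block of size at most $k$, recovering the expected partition.
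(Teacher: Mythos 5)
Your proof is correct, and it follows the same overall skeleton as the paper's: translate so the target average is $0$, induct on $m$ with base case $m\le k$, and extract one zero-sum sublist of size at most $k$ via an opposite-sign greedy ordering plus a pigeonhole on partial sums. The one genuine difference is that you handle the "no zero element" situation uniformly, whereas the paper splits it into two subcases: when every element sits at an endpoint of the interval it invokes a separate diophantine lemma (on integer solutions of $ax=by$) to cut the list into balanced endpoint blocks of size $k/\gcd$, and only when some element lies in the open interval does it run the greedy/pigeonhole argument, seeded at that interior element, with partial sums confined to the $k-1$ interior integers. Your choice of the invariant window $\{-a+1,\dotsc,b\}$ of size exactly $k$, together with starting the pigeonhole at $P_0=0$ over the $k+1$ sums $P_0,\dotsc,P_k$, absorbs the all-endpoints case automatically and makes the auxiliary diophantine lemma unnecessary; the small price is that your extracted block is only bounded by $k$ rather than the sharper $\le k-2$ the paper gets in its interior case. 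The delicate points you flagged (availability of an opposite-sign element when $P_i=0$, and preservation of the window under each greedy step) both check out.
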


To help prove this result,
we need another lemma characterizing the solution set
of a certain diophantine equation in two variables.

\begin{lemma}
\label{lem-dio}
	Let $ a $ and $ b $ be positive integers, with $d := \gcd\{a, b\}$.
	Then the set of integer solutions $ (x,y) $ of the equation $ ax = by $
	consists of all the integer multiples of the pair $ (\widetilde b,\widetilde a) $,
	where $ \widetilde a = a/d $ and $ \widetilde b = b/d$.
\end{lemma}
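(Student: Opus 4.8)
The plan is to reduce the equation to a coprime version and then apply Euclid's lemma. First I would write $a = d\,\widetilde a$ and $b = d\,\widetilde b$, so that by construction $\gcd\{\widetilde a,\widetilde b\} = 1$. Dividing the equation $ax = by$ through by $d$ (which is legitimate since $d \ne 0$ and both sides are integer multiples of $d$), I get that $(x,y)$ solves $ax = by$ if and only if it solves $\widetilde a\,x = \widetilde b\,y$. So it suffices to characterize the integer solutions of this reduced equation.

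Next I would establish the forward inclusion. Suppose $\widetilde a\,x = \widetilde b\,y$. Then $\widetilde a$ divides $\widetilde b\,y$; since $\gcd\{\widetilde a,\widetilde b\} = 1$, Euclid's lemma gives $\widetilde a \mid y$, say $y = \widetilde a\,t$ for some $t \in \bZ$. Substituting back, $\widetilde a\,x = \widetilde b\,\widetilde a\,t$, and cancelling the nonzero factor $\widetilde a$ yields $x = \widetilde b\,t$. Hence $(x,y) = t\,(\widetilde b,\widetilde a)$, an integer multiple of $(\widetilde b,\widetilde a)$.

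For the reverse inclusion I would simply verify directly that every such multiple is a solution: if $(x,y) = t\,(\widetilde b,\widetilde a)$, then
\[
  a x \;=\; d\,\widetilde a\,\widetilde b\,t \;=\; d\,\widetilde b\,\widetilde a\,t \;=\; b y \,,
\]
so the equation holds. Combining the two inclusions gives the claimed description of the solution set. I do not anticipate any real obstacle here; the only point requiring minor care is the legitimacy of cancelling $d$ and $\widetilde a$, which is fine since $a,b$ are positive (so $d \ge 1$ and $\widetilde a \ge 1$), and the invocation of Euclid's lemma via the coprimality of $\widetilde a$ and $\widetilde b$.
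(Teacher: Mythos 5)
Your proof is correct and follows essentially the same route as the paper's: reduce to the coprime equation $\widetilde a\,x = \widetilde b\,y$ and use coprimality (Euclid's lemma) to extract the common multiplier $t$. The only cosmetic difference is that the paper introduces two multipliers $k,\ell$ from the divisibilities $\widetilde b \mid x$ and $\widetilde a \mid y$ and then shows $k=\ell$, whereas you substitute once and cancel; both are fine.
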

\begin{proof}
	We can immediately reduce to the equation $ \widetilde ax = \widetilde by $,
	which has exactly the same solution set as $ax = by$.
	Since $ \widetilde a $ and $ \widetilde b $ are relatively prime,
	for any solution pair $(x,y)$
	it must be true that $ \widetilde a | y $ and $ \widetilde b | x $.
	Letting $ x = \widetilde bk $ and $ y = \widetilde a\ell $,
	then $ \widetilde a\widetilde b k = \widetilde b\widetilde a\ell $
	implies that $ k = \ell $.
	Thus the solutions of $ \widetilde ax = \widetilde by $
	(and hence also of $ax = by$)
	are exactly the integer multiples of $ (\widetilde b,\widetilde a) $.
\end{proof}

We now prove the Homogeneous Partitioning Property.

\begin{proof}
	(of Lemma~\ref{lem.homog-partition}) \,\,
	One strategy to achieve this homogeneous partitioning
	is to first translate all the given data
	(i.e., the interval $\cI$ and the list $\cL$)
	by any fixed constant $c \in \bZ$, solve the translated problem,
	and then ``un-translate'' the solution back to the original location.
	Thus it suffices to solve the problem for the case when $\mu = 0$
	and the interval $\cI$ is $[\,e_{\ell}, e_r\,]$,
	with endpoints $e_{\ell} \le 0$ and $e_r \ge 0$ such that $e_r - e_{\ell} = k$.
	The goal in this more specialized scenario, then, is to partition $\cL$
	into sublists of no more than $k$ entries each,
	such that the \emph{sum} (equals the average)
	of the entries of each sublist $\cS_i$ is zero.
	
%

The proof for this special scenario proceeds by an induction on $m$,
the number of elements in the list $\cL$.
(Note that this induction can be easily converted into an algorithm
for computing the desired partition.)
The base case(s) for this induction are all $m$ such that $1 \le m \le k$,
for which the result is trivially true.
So now suppose that the homogeneous partitioning property
holds for all number lists satisfying the hypotheses of Lemma~\ref{lem.homog-partition}
with $m \le h$,
and consider a list $\cL$ of $m = h+1$ integers
with values in $\cI = [\,e_{\ell}, e_r\,]$ and average value $\mu = 0$.
There are now three cases to consider:
\begin{enumerate}
  \item[\rm(a)] \emph{Some $n_i \in \cL$ is equal to zero.} \\
                In this case we can split off the singleton sublist $\cS_1 = \{ n_i \}$
                from $\cL$, leaving a smaller list $\wh{\cL}$ with $m=h$ entries,
                and average value $\wh{\mu} = 0$.
                Applying the inductive hypothesis to $\wh{\cL}$ completes
                the homogeneous partitioning of $\cL$.

  \item[\rm(b)] \emph{No element of $\cL$ is zero,
                      and all elements of $\cL$ lie at the endpoints $e_{\ell}$ and $e_r$ of $\,\cI$.} \\
                Suppose there are $\alpha$ copies of $e_{\ell}$
                and $\beta$ copies of $e_r$, so that $\,\alpha + \beta = h+1 > k$,
                and $\,\alpha e_{\ell} + \beta e_r = 0$,
                or equivalently $(-e_{\ell}) \alpha = e_r \beta$.
                Let $d = \gcd \{ -e_{\ell}, e_r \}$.
                Applying Lemma~\ref{lem-dio} to the equation $ax = by$
                with $a= -e_{\ell}$ and $b = e_r$,
                we see that the solution $(x,y) = (\alpha, \beta)$ to $ax = by$
                is an integer multiple of $(\frac{\,e_r\,}{d}, \frac{\,-e_{\ell}\,}{d})$.
                Thus we can completely partition $\cL$ into sublists $\cS_i$,
                each consisting of $\frac{\,e_r\,}{d}$ copies of $e_{\ell}$
                and $\frac{\,-e_{\ell}\,}{d}$ copies of $e_r$.
                Since $\,\frac{\,e_r\,}{d} + \frac{\,-e_{\ell}\,}{d} = \frac{\,k\,}{d}$,
                each of these sublists has $\frac{\,k\,}{d} \le k$ elements,
                and sum zero, as desired.
  \item[\rm(c)] \emph{No element of $\cL$ is zero,
                      but there is some $n_{i_1}$ from $\cL$
                      in the \emph{open} interval $\,\wt{\cI} = (e_{\ell}, e_r)$.} \\
                Begin building a sublist $\cS$ with the given (nonzero) element $\,n_{i_1}$ in $\wt{\cI}$.
                Pick from among the remaining elements of $\cL$
                to update $\cS = \{n_{i_1}, n_{i_2}, \dotsc, n_{i_j}, \dotsc \}$,
                and keep track of the ``partial sums'' $\sigma_j := \sum_{\ell=1}^j n_{i_{\ell}}$
                as you go to see if a zero sum has been achieved.

                At each stage, the element $n_{i_j}$ to be appended to $\cS$ is chosen
                to be any one of the remaining elements of $\cL$ that have
                a sign \emph{opposite} to that of $\sigma_{j-1}$,
                in order to try to drive the partial sum value to zero.
                Observe that there must always exist such an ``opposite-sign'' element
                remaining in $\cL$,
                since otherwise the sum (hence also the average)
                of all of the elements in $\cL$ would not be zero.
                Another consequence of this opposite-sign strategy
                is that the $\sigma_j$ values can never be equal
                to either endpoint $e_{\ell}$ or $e_r$ of the interval $\cI$;
                each $\sigma_j$ must be one of the $k-1$ integers
                in the interior of $\cI$, i.e. in $\,\wt{\cI}$.
                To see why this is so, first observe that
                $\sigma_1 = n_{i_1}$ is in $\,\wt{\cI}$ by construction.
                For the passage from $\sigma_j$ to $\sigma_{j+1}$ with $j \ge 1$,
                there are three scenarios:
                \[
                  \text{(i)} \;\;e_{\ell} < \sigma_j < 0 \,,
                     \qquad \text{(ii)} \;\;\sigma_j = 0 \,,
                     \quad\text{ or }\quad \text{(iii)} \;\;0 < \sigma_j < e_r \,.
                \]
                In case (i), $\sigma_{j+1}$ can be at most $e_r$ larger than $\sigma_j$,
                so $\sigma_{j+1} \in \,\wt{\cI}$.
                In case (ii), the construction will cease, and there will be no $\sigma_{j+1}$.
                And in case (iii), $\sigma_{j+1}$ can be at most
                $\lvert e_{\ell} \rvert$ smaller than $\sigma_j$,
                so once again $\sigma_{j+1} \in \,\wt{\cI}$.

                \smallskip
                Now carry on the building up of the sublist $\cS$
                using the ``opposite-sign'' strategy,
                until either \,(a) a partial sum $\sigma_j = 0$ with $j \le k-1$ is attained,
                or \,(b) the sublist contains $k-1$ elements with \emph{every} partial sum
                $\sigma_1, \dotsc, \sigma_{k-1}$ being nonzero.
                If (a) occurs, then split off the sublist
                $\cS = \{n_{i_1}, n_{i_2}, \dotsc, n_{i_j}\}$ from $\cL$,
                and the remaining sublist $\wh{\cL}$ can be homogeneously partitioned
                by the inductive hypothesis.
                On the other hand, if (b) occurs,
                then the $k-1$ nonzero partial sums must have some repetitions,
                since there are only $k-2$ nonzero integers in the open interval $\,\wt{\cI}$.
                So suppose that $\sigma_{i_p} = \sigma_{i_j}$ for some $p < j \leq k-1$,
                and let $\wh{\cS} := \{ n_{i_{p+1}}, \dotsc, n_{i_j} \}$,
                with at most $k-2$ elements.
                Observe that the sum of the elements in $\wh{\cS}$ is
                \[
                  \sum_{\ell=p+1}^j n_{i_\ell}
                      \;=\; \biggl( \,\sum_{\ell = 1}^j n_{i_\ell} - \sum_{\ell=1}^p n_{i_\ell} \biggr)
                      \;=\; \sigma_{i_j} - \sigma_{i_p}
                      \;=\; 0 \,,
                \]
                so splitting off the sublist $\wh{\cS}$ from $\cL$
                starts the homogeneous partitioning,
                leaving a remaining sublist $\wh{\cL}$
                that can be homogeneously partitioned by the inductive hypothesis.
\end{enumerate}
The result for the general interval $\cI = [\,j, j+k\,]$ now follows by translation.
\end{proof}

\begin{remark} \label{rem.HP-proof-analysis}
\rm
  A closer examination of the proof 
  of the Homogeneous Partitioning property
  indicates that the presence of sublists of ``full length'' $k$
  in a homogeneous partitioning may be somewhat rare.
  In most of the scenarios for splitting off a sublist $\cS$ from the main list $\cL$,
  the length of the split-off sublist is strictly less than $k$.
  In fact, the \emph{only} scenario that can force a sublist to have length $k$
  is very special;
  with all data translated so that $\mu = 0$,
  \emph{all} of the elements of $\cL$ must be at the endpoints $e_{\ell}$ and $e_r$,
  \emph{and} these endpoints must be relatively prime.

  Another feature of the homogeneous partitioning problem
  that is hinted at in the proof
  is that often these partitions are \emph{not unique}.
  Indeed, in the partitioning procedure described in the proof,
  in particular for case (c),
  there may be many arbitrary choices that can be made,
  all of which lead to an admissible partitioning.
  In addition, there may be other homogeneous partitionings
  that cannot be generated from the procedure in the proof at all,
  no matter what choices are made.
\end{remark}

\begin{remark}
 \rm
   It is worth noting that Lemma~\ref{lem.homog-partition}
   can be extended to apply to the situation
   in which the average $\mu \in \cI$ is \emph{not} necessarily an integer.
   For this general case the statement 
   looks like the following:
   \begin{quote}
     \underline{General Homogeneous Partitioning Property} \\[4pt]
     Let $\cI = [\,j, j+k\,]$ be a closed interval with \emph{integer} endpoints and length $k \ge 1$,
     and consider a list $\mathcal{L} = \{n_1,n_2,\dotsc,n_m\}$ of $m$ integers,
     all in $\cI$.
     Let $\mu \in \cI$ denote the average value of all of the entries in $\cL$.
     Then $\cL$ can be partitioned into sublists $\cS_1,\cS_2,\dotsc,\cS_{\ell}$
     such that the number of entries in each $\cS_i$ does not exceed $k$,
     and the average value of each sublist
     is in the closed interval $\bigl[\, \lfloor \mu \rfloor, \lceil \mu \rceil \,\bigr]$.
   \end{quote}
   Note that this more general partitioning result
   reduces to exactly Lemma~\ref{lem.homog-partition}
   when the average $\mu$ is an integer.
   However, since we will not need the general property for this paper,
   we omit the proof.
\end{remark}

\begin{ejem} \label{ex.homogeneous-partition}
  Recall that in Example~\ref{ex.stacking} we took the diagonal vector of a Smith form $S(\la)$,
  and rearranged the $\bF$-irreducible factors via Lemma~\ref{Stacking}
  to obtain a vector $\mathbf{p}(\la)$ with average degree $10$
  and $4$-homogeneous degree vector $\deg \mathbf{p} = (12, 12, 11, 9, 8, 8)$.
  Lemma~\ref{lem.homog-partition} now guarantees that there is a partitioning
  of $\,\deg \mathbf{p}\,$ (and a corresponding partitioning of the entries of $\mathbf{p}$ itself)
  into sublists of size at most $4$, where \emph{each} sublist
  also has average degree $10$.
  Applying the procedure described in the proof of the Lemma
  produces the partition $(12,8\mid12,8\mid11,9)$ for $\,\deg \mathbf{p}\,$,
  and the corresponding rearrangement and partition
  \begin{equation} \label{eqn.partition-example}
    \bigl( p_1(\la), p_6(\la) \mid p_2(\la), p_5(\la) \mid p_3(\la), p_4(\la) \bigr)
      \,=\,
    \bigl( \chi^2\phi\psi^2, \chi\phi\psi^2 \mid \chi^2\phi\psi^2,
                   \chi\phi\psi^2 \mid \chi^2\phi\psi, \chi\phi^2\psi \bigr)
  \end{equation}
  of the entries of $\,\mathbf{p}(\la)$.
  It is interesting to note
  that there are several pathways through the partitioning procedure for this example,
  but all of them lead to the partitioning in \eqref{eqn.partition-example}.
  However, there are two other homogeneous partitionings of $\,\deg \mathbf{p}\,$
  with sublist size at most $k=4$,
  neither of which can be generated by the procedure of the Lemma.
  They are $(12,8,12,8 \mid 11,9)$ and $(12,8 \mid 12,8,11,9)$.
\end{ejem}

\subsection{Un-triangularizing $T(\la)$}
  \label{subsect.untriangularizing}

One final tool is needed in order to complete the construction
of our $k$-quasi-triangular realization.
After our realization is initially brought into quasi-triangular form,
it may happen that the off-diagonal blocks have increased
in degree beyond the target for the final matrix polynomial,
since no control of these blocks has even been attempted
in the early stages of the construction.
Thus we need some method to bring these degrees
back within the target range.
 Furthermore, it is important to do this by unimodular transformations,
so that the desired finite spectral structure will not be spoiled.
Lemma~\ref{LemOffDiagClean}, a 
generalization of Lemma 2.4 from \cite{TisZab},
shows how to achieve this goal.

Two preliminaries are needed for Lemma~\ref{LemOffDiagClean}.
 First it will be helpful to recall the notion of 
the ``diagonals'' of a matrix,
and more generally the block-diagonals of a block-partitioned matrix.

\begin{defi}[Diagonals of a matrix] \label{def.diagonals} \quad \\
  The \emph{$\ell^{th}$-diagonal} of a matrix $A$
  is the set of entries $a_{ij}$ of $A$ such that $j-i = \ell$.
  (Note that the $0^{th}$-diagonal is conventionally known as the main diagonal of $A$.)
  If $A$ is block-partitioned with blocks $A_{ij}$,
  then the \emph{$\ell^{th}$-block-diagonal} of $A$
  is the collection of blocks $A_{ij}$ with $j-i = \ell$.
\end{defi}

\noindent
A second important background fact concerns the division of matrix polynomials.
We recall now this fundamental result for the convenience of the reader.

\begin{lema}[Division of matrix polynomials] \label{lem.matrix-poly-division} \quad \\
  Suppose $A(\la)$ is any $\mbyn$ matrix polynomial over an arbitrary field $\,\bF$.
  Furthermore, let $B(\la)$ and $C(\la)$ be any two
  \emph{strictly regular} matrix polynomials over $\,\bF$,
  with size $\mbym$ and $\nbyn$, respectively.
  Then we have:
  \begin{enumerate}
    \item[\rm (a)] \underline{\rm Left division by $B$}:
                   There exist unique $\mbyn$ matrix polynomials $Q_{\ell}(\la)$ and $R_{\ell}(\la)$
                   such that
                   \[
                     A(\la) \;=\; B(\la) Q_{\ell}(\la) \,+\, R_{\ell}(\la) \,,
                   \]
                   and either $R_{\ell}(\la) = 0$,
                   or $R_{\ell}(\la)$ is nonzero with $\deg R_{\ell} < \deg B$.
                   The matrices $Q_{\ell}(\la)$ and $R_{\ell}(\la)$
                   are called the left quotient and left remainder of $A(\la)$, respectively,
                   upon \parens{left} division by $B(\la)$.
    \item[\rm (b)] \underline{\rm Right division by $C$}:
                   There exist unique $\mbyn$ matrix polynomials $Q_r(\la)$ and $R_r(\la)$
                   such that
                   \[
                     A(\la) \;=\; Q_r(\la) C(\la) \,+\, R_r(\la) \,,
                   \]
                   and either $R_r(\la) = 0$,
                   or $R_r(\la)$ is nonzero with $\deg R_r < \deg C$.
                   The matrices $Q_r(\la)$ and $R_r(\la)$
                   are called the right quotient and right remainder of $A(\la)$, respectively,
                   upon \parens{right} division by $C(\la)$.
  \end{enumerate}
\end{lema}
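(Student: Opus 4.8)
The plan is to mimic the classical division algorithm for scalar polynomials, exploiting the fact (noted just after Definition~\ref{def.strictly-regular}) that strict regularity of the divisor forces its leading coefficient to be invertible; this plays the role of the ``monic'' hypothesis in the scalar case. I would handle left division by $B(\la)$ in detail, since right division by $C(\la)$ is obtained by the same argument with all multiplications performed on the opposite side.

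\textbf{Existence (left division).} I would induct on $\deg A$. Write $g := \deg B$, so that $B(\la) = B_g \la^g + \dotsb + B_0$ with $B_g \in \bF^{\mbym}$ invertible. If $A(\la) = 0$ or $\deg A < g$, take $Q_{\ell}(\la) = 0$ and $R_{\ell}(\la) = A(\la)$. Otherwise set $d := \deg A \ge g$ and let $A_d \in \bF^{\mbyn}$ be the nonzero leading coefficient of $A$. Form
\[
  A'(\la) \;:=\; A(\la) \,-\, B(\la)\, B_g^{-1} A_d\, \la^{d-g} \,,
\]
whose $\la^d$ coefficient is $A_d - B_g B_g^{-1} A_d = 0$, hence $\deg A' < d$. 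By the inductive hypothesis, $A'(\la) = B(\la) Q'(\la) + R_{\ell}(\la)$ with $R_{\ell} = 0$ or $\deg R_{\ell} < g$, and then $Q_{\ell}(\la) := B_g^{-1} A_d\, \la^{d-g} + Q'(\la)$ together with $R_{\ell}(\la)$ gives the desired decomposition.

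\textbf{Uniqueness.} Suppose $B Q_1 + R_1 = B Q_2 + R_2$, with both pairs obeying the remainder bound. Then $B(\la)\bigl(Q_1(\la) - Q_2(\la)\bigr) = R_2(\la) - R_1(\la)$, and the right-hand side is either $0$ or has degree $< g$. If $Q_1 \ne Q_2$, let $N \ne 0$ be the leading coefficient of $Q_1 - Q_2$, of some degree $e \ge 0$; since $B_g$ is invertible, $B_g N \ne 0$, so the left-hand side has degree exactly $g + e \ge g$, a contradiction. Hence $Q_1 = Q_2$, and therefore $R_1 = R_2$.

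\textbf{Right division.} I would repeat the same steps with $h := \deg C$, invertible leading coefficient $C_h$, reduction step $A(\la) \mapsto A(\la) - A_d\, \la^{d-h}\, C_h^{-1} C(\la)$, and, in the uniqueness argument, the observation that $N C_h \ne 0$ whenever $N \ne 0$. Overall the argument is routine, and the only place where care is needed is the uniqueness step: the single fact making it work is that multiplying a nonzero matrix by the invertible leading coefficient of the divisor cannot annihilate it, so no cancellation of the leading term occurs. This is precisely where strict regularity of $B$ and $C$ is essential --- without it the leading coefficients could be singular, and both existence and uniqueness of the quotient/remainder would break down.
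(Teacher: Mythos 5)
Your argument is correct and complete: it is the standard division-algorithm proof (induction on $\deg A$ to peel off the leading term, plus invertibility of the divisor's leading coefficient for both the reduction step and the uniqueness step), which is exactly the argument given in the references the paper cites for this lemma (\cite{Gant59}, \cite{Kailath}, \cite{LanTis}) --- the paper itself offers no proof beyond those citations. You also correctly identify that strict regularity of $B$ and $C$ is used precisely to guarantee the leading coefficients are invertible, so there is nothing to add.
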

\begin{proof}
  See \cite[Ch.\ 4]{Gant59}, \cite[Sect.\ 6.3]{Kailath}, or \cite[Sect.\ 7.2]{LanTis}.
\end{proof}

\begin{lema}[Degree reduction of off-diagonal blocks]
	\label{LemOffDiagClean} \quad \\
	Let $T(\lambda)\in\mathbb{F}[\lambda]^{n\times n}$
	be a block upper triangular matrix polynomial,
	partitioned into blocks such that all of the $s$ diagonal blocks
	$T_{ii}(\la) \in \mathbb{F}[\lambda]^{n_i\times n_i}$
	are \emph{strictly regular}. 
	Then $T(\lambda)$ is unimodularly equivalent to a block upper triangular matrix polynomial
	$\widetilde T(\lambda)$ with exactly the same diagonal blocks as $T(\la)$,
	and with off-diagonal blocks satisfying
	\begin{equation} \label{eqn.degree-condition}
	  \deg \wt T_{ij}(\lambda)
	     \;<\;
      \min \bigl\{\,\deg T_{ii}(\lambda), \,\deg T_{jj}(\lambda) \,\bigr\}
    \end{equation}
	for $\,1 \leq i < j \leq s$.
\end{lema}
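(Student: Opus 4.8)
The plan is to reduce the off-diagonal blocks one at a time by the block elementary operations that realize unimodular equivalence, sweeping over the block super-diagonals in increasing order so that finished work is never undone.

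First I would fix notation for the two basic moves. For $i<j$ and a matrix polynomial $X(\la)$ of the appropriate size, let $E_{ij}(X)$ be the block matrix that agrees with the identity except for a block $-X(\la)$ in position $(i,j)$. Since $i<j$, this $E_{ij}(X)$ is block upper triangular with identity diagonal blocks, so $\det E_{ij}(X)=1$ and $E_{ij}(X)$ is unimodular. I would then record the (routine) effect of multiplying $T(\la)$ by such a matrix: left multiplication $E_{ij}(X)\,T$ sends $T_{ib}\mapsto T_{ib}-X(\la)T_{jb}$, which, since $T$ is block upper triangular, changes only the $(i,j)$ block and the blocks $(i,b)$ with $b>j$; right multiplication $T\,E_{ij}(X)$ sends $T_{aj}\mapsto T_{aj}-T_{ai}(\la)X(\la)$, changing only the $(i,j)$ block and the blocks $(a,j)$ with $a<i$. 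The two facts to extract are that neither move alters any diagonal block or destroys block upper triangularity, and that every block altered other than $(i,j)$ itself lies on a \emph{strictly higher} block super-diagonal than $(i,j)$.

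Next I would treat a single off-diagonal block $(i,j)$. Because $T_{ii}$ and $T_{jj}$ are strictly regular, Lemma~\ref{lem.matrix-poly-division} applies, and I would divide by whichever of the two has the smaller degree: if $\deg T_{ii}\le\deg T_{jj}$, left-divide $T_{ij}=T_{ii}Q_\ell+R_\ell$ with $\deg R_\ell<\deg T_{ii}$ and replace $T$ by $T\,E_{ij}(Q_\ell)$, so the new $(i,j)$ block is $R_\ell$ with $\deg R_\ell<\deg T_{ii}=\min\{\deg T_{ii},\deg T_{jj}\}$; otherwise right-divide $T_{ij}=Q_rT_{jj}+R_r$ with $\deg R_r<\deg T_{jj}$ and replace $T$ by $E_{ij}(Q_r)\,T$, so the new $(i,j)$ block is $R_r$ with $\deg R_r<\deg T_{jj}=\min\{\deg T_{ii},\deg T_{jj}\}$. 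Either way, one unimodular move brings block $(i,j)$ into compliance with \eqref{eqn.degree-condition} without touching any diagonal block, and it disturbs only blocks on strictly higher super-diagonals.

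Finally I would run the sweep: for $\ell=1,2,\dots,s-1$, and for each block $(i,i+\ell)$ on the $\ell$-th block super-diagonal, apply the single-block reduction above. Because every reduction leaves the diagonal blocks fixed, they remain strictly regular throughout, so every invoked division is legitimate; and because a reduction on the $\ell$-th super-diagonal only disturbs blocks on higher super-diagonals, once the $\ell$-th super-diagonal has been processed its blocks are never changed again. Hence the final matrix polynomial $\widetilde T(\la)$, obtained from $T(\la)$ by finitely many left and right multiplications by unimodular matrices, is unimodularly equivalent to $T(\la)$, is block upper triangular with exactly the same diagonal blocks as $T(\la)$, and has all off-diagonal blocks satisfying \eqref{eqn.degree-condition}. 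The one genuinely delicate point — and the part I expect to need the most care in the write-up — is the bookkeeping claim that each elementary move propagates changes only to strictly higher super-diagonals while leaving the diagonal blocks untouched; given that, everything else reduces to the standard polynomial division argument (as in Lemma~2.4 of \cite{TisZab}).
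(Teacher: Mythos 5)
Your proposal is correct and follows essentially the same route as the paper's proof: a single-block reduction via left or right matrix polynomial division (choosing the divisor of smaller degree), implemented as a block elementary unimodular column or row operation, combined with the key observation that each such move disturbs only blocks on strictly higher block super-diagonals, so that a sweep in increasing super-diagonal order never undoes completed work. No gaps.
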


\medskip
\noindent
\emph{Proof.}
  Let us begin by focusing
  on a single fixed but arbitrary off-diagonal block $T_{ij}(\la)$ with $i < j$,
  and showing how to reduce its degree by a unimodular transformation
  so as to satisfy the condition \eqref{eqn.degree-condition}.
  It may be that \eqref{eqn.degree-condition} is already satisfied;
  in this case do nothing.
  Otherwise, suppose first that $\deg T_{ii} \le \deg T_{jj}$.
  Then by Lemma~\ref{lem.matrix-poly-division} we can
  divide $T_{ij}(\la)$ by $T_{ii}(\la)$ \emph{on the left}
  to obtain
  \[ T_{ij}(\la) \,=\, T_{ii}(\la)Q_{ij}(\la) + R_{ij}(\la)
      \;\text{ with }\; R_{ij}(\la) \,=\,0  \;\text{ or }\; \deg R_{ij}(\la) < \deg T_{ii}(\la) \,,
  \]
  and hence $T_{ij}(\la) - T_{ii}(\la)Q_{ij}(\la) =  R_{ij}(\la)$.
  Now define the unimodular matrix
  \[
    V_{ij}(\la) \,:=\;
    \mat{ccccc} I_a & & & & \\
                  & I_{n_i}&   & -Q_{ij}(\la) & \\
                  &        & I_b &  & \\
                  &        &   & I_{n_j} & \\
                  &        &   &         & I_c \rix \,,\;
    \text{ where } \quad
    a = \sum_{m=1}^{i-1} n_m \,, \;
    b = \sum_{m=i+1}^{j-1} n_m \,, \;
    c = \sum_{m = j+1}^{s} n_m \,.
  \]
  Multiplying $T(\la)$ on the right by $V_{ij}(\la)$
  has the effect of an elementary block-column operation,
  which replaces the block $T_{ij}(\la)$ by $\wt{T}_{ij}(\la) = R_{ij}(\la)$,
  thus satisfying condition \eqref{eqn.degree-condition}
  at this one location.
  In addition we see that the diagonal blocks of $T(\la)$
  remain unchanged by this block-column operation.
  Indeed, the \emph{only} other blocks of $T(\la)$
  that may even possibly be affected
  are the ones directly above the $(i,j)$-block,
  i.e., blocks $T_{\ell j}(\la)$ with $\ell \le i$.

  On the other hand, if $\deg T_{ii} > \deg T_{jj}$
  then something analogous can be done.
  In this case use Lemma~\ref{lem.matrix-poly-division}
  to divide $T_{ij}(\la)$ by $T_{jj}(\la)$ \emph{on the right}
  to obtain
  \[ T_{ij}(\la) \,=\, \wh{Q}_{ij}(\la) T_{jj}(\la) + \wh{R}_{ij}(\la)
      \;\text{ with }\; \wh{R}_{ij}(\la) \,=\,0  \;\text{ or }\; \deg \wh{R}_{ij}(\la) < \deg T_{jj}(\la) \,,
  \]
  and hence $T_{ij}(\la) - \wh{Q}_{ij}(\la) T_{jj}(\la) =  \wh{R}_{ij}(\la)$.
  Define the unimodular matrix
  \[
    U_{ij}(\la) \,:=\;
    \mat{ccccc} I_a & & & & \\
                  & I_{n_i}&   & -\wh{Q}_{ij}(\la) & \\
                  &        & I_b &  & \\
                  &        &   & I_{n_j} & \\
                  &        &   &         & I_c \rix \,,\;
    \text{ where } \quad
    a = \sum_{m=1}^{i-1} n_m \,, \;
    b = \sum_{m=i+1}^{j-1} n_m \,, \;
    c = \sum_{m = j+1}^{s} n_m \,,
  \]
  and multiply $T(\la)$ on the left by $U_{ij}(\la)$.
  This has the effect of an elementary block-row operation,
  which replaces the block $T_{ij}(\la)$ by $\wt{T}_{ij}(\la) = \wh{R}_{ij}(\la)$,
  once again satisfying condition \eqref{eqn.degree-condition}
  at this one location.
  In the product $U_{ij}(\la) T(\la)$,
  the only other blocks of $T(\la)$ that can possibly affected
  are those to the right of the $(i,j)$-block,
  i.e., blocks $T_{i \ell}(\la)$ with $\ell \ge j$.
  Altogether, then, the only possible collateral damage that can be inflicted
  on $T(\la)$ by this degree reduction of the $(i,j)$-block
  is to the blocks of $T(\la)$ in the L-shaped region marked in \eqref{eqn.block-T}.
\begin{equation} \label{eqn.block-T}
  T(\la) \;=\;
  \mat{cccccccc} \diamond &        &  &    & \tcr{\bigl\Vert}  & & & \\[-5pt]
                          & \ddots &  &    & \tcr{\Bigl\lVert} & & & \\[4pt]
            &    & T_{ii} &  & T_{ij} & \tcr{***} & \hspace*{-2.5mm}\tcr{***\,*} & \hspace*{-2.7mm}\tcr{***} \\
                     &        &         & \ddots &         &  &  &  \\
                     &        &         &        & T_{jj}  &  &  &  \\
                     &        &         &        &         & \ddots &  &\\
                     &        &         &        &         &        & \ddots & \\
                     &        &         &        &         &        &    & \hspace*{-3mm} \diamond \rix
\end{equation}
  It is important to keep this L-shape firmly in mind
  as we see how to order the block degree reductions
  so that no individual block's degree reduction
  spoils a block that has already had its degree reduced.

  The key observation here is that whenever a block $T_{ij}(\la)$
  has its degree reduced by the procedure described above,
  then all of the other blocks in $T(\la)$ that are affected by that reduction
  lie on ``higher'' diagonals of $T(\la)$ than the diagonal of $T_{ij}(\la)$.
  More precisely, if $T_{ij}(\la)$ lies on the $\ell^{th}$-block-diagonal of $T(\la)$,
  then all other blocks affected by that degree reduction
  lie on an $m^{th}$-block-diagonal with $m > \ell$.
  Thus a single \emph{sweep-by-diagonals} through the off-diagonal blocks
  will have the desired effect
  of achieving condition \eqref{eqn.degree-condition}
  on \emph{all} off-diagonal blocks simultaneously.
  To do this kind of sweep,
  first target each of the blocks in the $1^{th}$-block-diagonal for degree reduction,
  in any order.
  After visiting each block in the $1^{th}$-block-diagonal,
  go next to the $2^{th}$-block-diagonal and do a degree reduction
  on each of these blocks, again in any order.
  Because of the key observation,
  none of the degree reductions done in the $1^{th}$-block-diagonal
  will be spoiled by the degree reductions done in the $2^{th}$-block-diagonal,
  and none of the $2^{th}$-block-diagonal degree reductions will spoil each other.
  Continue in this manner,
  moving up one block-diagonal at a time,
  until each off-diagonal block has been visited exactly once.
  At this point, the desired matrix polynomial $\wt{T}(\la)$
  will have been attained. \hfill $\square$

\medskip
We finally have all the tools we will need to ``un-triangularize" $ T(\la) $
into a degree-$d$, $k$-quasi-triangular realization
of the original list of finite spectral data.
 For convenience, we recall the statement of Theorem~\ref{thm.QTR-strictly-regular} here.
\begin{teor*}[Quasi-Triangular Realization: Strictly Regular Case]
 	\label{thm.QTR-strictly-regular-reprise} \quad \\
 	Suppose a list of $m$ nonconstant monic polynomials $s_1(\lambda),\dots, s_m(\lambda)$
 	over an arbitrary field $\,\bF$ is given,
 	satisfying the divisibility chain condition
 	$s_1(\la) \,\vert\, s_2(\la) \,\vert \,\dotsb\, \vert\, s_m(\la)$.
 	Let $\sigma := \sum_{i=1}^m \deg \bigl( s_i(\lambda) \bigr)$,
 	and define $k$ to be the maximum degree among all of the $\bF$-irreducible factors
 	of the polynomials $s_i(\lambda)$ for $i = 1,\dotsc, m$.
 	Then for any choice of nonzero $d, n \in \bN$
 	such that $n \ge m$ and $dn = \sigma$,
 	there exists an $n\times n$, degree $d$, strictly regular matrix polynomial $Q(\lambda)$ over $\bF$
 	that is $k$-quasi-triangular,
 	and has exactly the given polynomials $s_1(\lambda),\dots, s_m(\lambda)$
 	as its nontrivial invariant polynomials,
 	together with $n-m$ trivial invariant polynomials. 
 	In addition, $Q(\lambda)$ can always be chosen so that the degree
 	of every entry in any off-diagonal block of $Q(\lambda)$
 	is strictly less than $d$.
 \end{teor*}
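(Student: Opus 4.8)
The plan is to assemble the machinery developed in this section in the three-phase order sketched above, so each step below is just an application of a previously established lemma.

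\textbf{Setup and un-diagonalization.}
First I would form the $n\times n$ matrix polynomial
\[
  S(\la) \,:=\; I_{n-m} \,\oplus\, \diag\{s_1(\la),\dotsc,s_m(\la)\} \,,
\]
which is genuinely in Smith form because $1\mid s_1\mid s_2\mid\dotsb\mid s_m$ and the $s_i$ are monic; here the block $I_{n-m}$ makes sense because $n-m\ge 0$ by the hypothesis $n\ge m$. Since $\deg\det S(\la)=\sigma=dn$ and unimodular equivalence alters a determinant only by a nonzero constant, every matrix polynomial produced below will have determinantal degree exactly $dn$. Applying Corollary~\ref{cor.un-diagonalization} to $S(\la)$ --- with $k$ the largest degree of an $\bF$-irreducible factor of the $s_i$ and $\cM=\cF_1\sqcup\dotsb\sqcup\cF_k$ the degree-sorted coprime partition --- produces an upper triangular $T_1(\la)$, unimodularly equivalent to $S(\la)$, whose diagonal degree vector $\mathbf{v}:=\deg(\diag T_1)$ is $k$-homogeneous (its diagonal factor-counting vectors are also $1$-homogeneous, though that is not needed again). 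Because $T_1$ is triangular with $\deg\det T_1=dn$, the entries of $\mathbf{v}$ sum to $dn$, so their average is the integer $d$; $k$-homogeneity then places every entry of $\mathbf{v}$ in an integer interval of length at most $k$ containing $d$.

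\textbf{Partitioning and blocking the diagonal.}
Next I would apply the Homogeneous Partitioning Property (Lemma~\ref{lem.homog-partition}) to the list $\mathbf{v}$ with $\mu=d$, obtaining a partition of the $n$ diagonal positions into sublists $\cS_1,\dotsc,\cS_\ell$, each of size at most $k$, with the diagonal degrees indexed by $\cS_i$ summing to exactly $d\lvert\cS_i\rvert$. Then, using triangularity-preserving unimodular equivalences that interchange adjacent diagonal entries --- these are instances of Corollary~\ref{cor.unimod-trans} obtained by taking $\cF=\cM$ (so $\cG$ is empty) and choosing the free polynomials $a,b$ in the proof of Lemma~\ref{LemUniTransf} to be the cofactors $\wh r$ and $\wh p$, which literally swaps the two diagonal entries --- I would rearrange the diagonal of $T_1$ so that the positions belonging to each $\cS_i$ form a contiguous block. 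Call the result $T_2(\la)$: it is block upper triangular, unimodularly equivalent to $S(\la)$, with upper triangular diagonal blocks $B_1(\la),\dotsc,B_\ell(\la)$ where $B_i$ has size $\lvert\cS_i\rvert\le k$ and $\deg\det B_i=d\lvert\cS_i\rvert$.

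\textbf{Un-triangularizing the diagonal blocks and cleaning the rest.}
For each $i$, the integer $\deg\det B_i=d\lvert\cS_i\rvert$ is divisible by the block size $\lvert\cS_i\rvert$ with quotient $d$, so Corollary~\ref{cor.prescribed} yields a strictly regular $\lvert\cS_i\rvert\times\lvert\cS_i\rvert$ matrix polynomial $\wt B_i(\la)$ of degree $d$ that is unimodularly equivalent to $B_i(\la)$; write $\wt B_i=E_iB_iF_i$ with $E_i,F_i$ unimodular. Multiplying $T_2$ on the left by $\diag(E_1,\dotsc,E_\ell)$ and on the right by $\diag(F_1,\dotsc,F_\ell)$ --- block-diagonal matrices matching the partition --- replaces each $B_i$ by $\wt B_i$ while keeping the matrix block upper triangular; call the result $T_3(\la)$, still unimodularly equivalent to $S(\la)$, now with strictly regular diagonal blocks of degree exactly $d$ but off-diagonal blocks of uncontrolled degree. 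Finally, since the diagonal blocks of $T_3$ are strictly regular, Lemma~\ref{LemOffDiagClean} applies and produces a block upper triangular $Q(\la)$, unimodularly equivalent to $T_3$ (hence to $S$), with the same diagonal blocks $\wt B_1,\dotsc,\wt B_\ell$ and with $\deg Q_{ij}<\min\{\deg\wt B_i,\deg\wt B_j\}=d$ for all $i<j$; in particular every entry of every off-diagonal block of $Q$ has degree $<d$. Then $\deg Q=d$, the diagonal blocks have size at most $k$ so $Q$ is $k$-quasi-triangular, and $\deg\det Q=dn$ together with $\deg Q=d$ forces the leading coefficient of $Q$ to be nonsingular, i.e.\ $Q$ is strictly regular. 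Since $Q\sim S$ unimodularly, $Q$ has the same Smith form as $S$, so its nontrivial invariant polynomials are exactly $s_1(\la),\dotsc,s_m(\la)$ and it has $n-m$ trivial invariant polynomials, which is the assertion of the theorem.

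\textbf{The main obstacle.}
Most of the conceptual work has already been packaged into the lemmas above, so the chief remaining difficulty is careful bookkeeping rather than a new idea. The point that must be handled with care is the ordering of the last two phases: Lemma~\ref{LemOffDiagClean} requires \emph{strictly regular} diagonal blocks, so the replacement $B_i\leadsto\wt B_i$ via Corollary~\ref{cor.prescribed} has to be performed \emph{before} the off-diagonal cleanup, and it is only at the very end, after both phases are complete, that one can simultaneously conclude $\deg Q=d$ and --- via $\deg\det Q=dn$ --- that $Q$ is strictly regular with grade equal to degree. The only other place needing a moment's thought is verifying that the rearrangement of diagonal entries in the middle phase really can be realized by unimodular equivalences preserving upper triangularity, which is exactly what Corollary~\ref{cor.unimod-trans}, together with the flexibility in the proof of Lemma~\ref{LemUniTransf}, provides.
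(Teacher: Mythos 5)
Your proposal is correct and follows essentially the same five-step route as the paper's own proof: un-diagonalize the Smith form (Corollary~\ref{cor.un-diagonalization}), homogeneously partition the diagonal degrees (Lemma~\ref{lem.homog-partition}), rearrange by adjacent interchanges (Corollary~\ref{cor.unimod-trans}), replace each diagonal block by a strictly regular degree-$d$ block (Corollary~\ref{cor.prescribed}), and then clean the off-diagonal blocks (Lemma~\ref{LemOffDiagClean}). The only cosmetic difference is that you conclude strict regularity from $\deg\det Q = dn$ and $\deg Q = d$ forcing a nonsingular leading coefficient, whereas the paper invokes the Index Sum Theorem; both are valid and equivalent.
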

\begin{proof}
  From the given spectral data, begin by constructing the $ n\times n $ Smith form
  \[
    S(\la) = \diag\{\, \,\underbrace{1,\dotsc,1}_{n-m}, s_1(\la), \dotsc, s_m(\la) \,\} \,.
  \]
  Now we can use the tools developed in Corollary~\ref{cor.un-diagonalization},
  Lemma~\ref{lem.homog-partition}, Corollary~\ref{cor.unimod-trans}, Corollary~\ref{cor.prescribed},
  and Lemma~\ref{LemOffDiagClean} to build the desired $k$-quasi-triangular realization
  of the spectral data contained in $S(\la)$,
  in the following five steps:
  \begin{itemize}
    \item Use $S(\la)$ as input to Corollary~\ref{cor.un-diagonalization}
          to generate an upper triangular $T(\la)$ that is unimodularly equivalent to $S(\la)$,
          and has a diagonal degree vector $\,\deg \bigl( \diag T(\la) \bigr)$
          that is $k$-homogeneous.
    \item Use the natural vector $\,\deg \bigl( \diag T(\la) \bigr)$,
          with average value $\mu = d$,
          as input to Lemma~\ref{lem.homog-partition},
          and find a homogeneous partitioning of the degrees of the diagonal entries of $T(\la)$
          into $\ell$ sublists.
          The corresponding partitioning of the diagonal entries themselves,
          with the elements of the entry sublists arranged into $\ell$ contiguous groups,
          provides a target for the rearrangement of the entries on the diagonal of $T(\la)$.
    \item Implement this rearrangement of the diagonal entries of $T(\la)$
          via the triangularity-preserving unimodular transformations of $T(\la)$
          provided by Corollary~\ref{cor.unimod-trans},
          each of which has the effect of simply performing an interchange of adjacent diagonal entries.
          (Of course some of the off-diagonal entries are being changed in the process,
           but we do not try to keep any control of them at this stage of the construction.)
          Denote the resulting upper triangular matrix by $\wh{T}(\la)$,
          and partition this $\wh{T}(\la)$ into blocks
          \begin{equation}\label{eqn.tri-partition1}
            \wh{T}(\la) \;=\;
               \left[\begin{array}{cccc} \wh{T}_{11}(\la) & \wh{T}_{12}(\la) & \dotsm & \wh{T}_{1\ell}(\la) \\
                                       & \wh{T}_{22}(\la) & \dotsm & \wh{T}_{2\ell}(\la) \\[-2pt]
                                       & & \ddots & \vdots \\
                                       & & & \wh{T}_{\ell\ell}(\la) \end{array}\right] \,,
          \end{equation}
          so that each (upper triangular and square) diagonal block $\wh{T}_{jj}(\la)$
          has diagonal entries that correspond to the sublist $\cS_j$
          of the homogeneous partitioning of $\,\deg \bigl( \diag T(\la) \bigr)$.
          For each $j$, if we denote the size of the block $\wh{T}_{jj}(\la)$ by $n_j \times n_j$,
          then from the Homogeneous Partitioning property
          we know that $n_j \le k$,
          and the average degree of the diagonal entries of $\wh{T}_{jj}(\la)$ is $d$.
          So the sum of the degrees of the diagonal entries of $\wh{T}_{jj}(\la)$ is $d n_j$.
    \item By Corollary~\ref{cor.prescribed} we know that each \emph{diagonal} block $\wh{T}_{jj} (\la)$
          is unimodularly equivalent to a strictly regular polynomial matrix $P_{jj}(\la)$
          of degree $d$, which is (probably) no longer upper triangular.
          Let these equivalences be denoted by
          \[
            P_{jj}(\la) :=\, \wh{U}_{jj}(\la) \wh{T}_{jj}(\la) \wh{V}_{jj}(\la) \,,
          \]
          where each $\wh{U}_{jj}(\la)$ and $\wh{V}_{jj}(\la)$ is $n_j \times n_j$ and unimodular.
          Now define the $\nbyn$ block-diagonal unimodular matrices
          \[
            \wh{U}(\la) :=\, \diag \bigl\{\, \wh{U}_{11}(\la), \wh{U}_{22}(\la),
                                   \dotsc, \wh{U}_{\ell \ell}(\la) \,\bigr\}
                \quad \text{ and } \quad
            \wh{V}(\la) :=\, \diag \bigl\{\, \wh{V}_{11}(\la), \wh{V}_{22}(\la),
                                   \dotsc, \wh{V}_{\ell \ell}(\la) \,\bigr\} \,,
          \]
          and apply them to the full $\wh{T}(\la)$,
          to get
          \begin{equation}\label{eqn.tri-partition2}
              P(\la) :=\, \wh{U}(\la) \wh{T}(\la) \wh{V}(\la) \,=\,
                     \left[\begin{array}{cccc} P_{11}(\la) & P_{12}(\la) & \dotsm & P_{1\ell}(\la) \\
                                   & P_{22}(\la) & \dotsm & P_{2\ell}(\la) \\
                                   & & \ddots & \vdots \\
                                   & & & P_{\ell\ell}(\la) \end{array}\right].
          \end{equation}
          This matrix $P(\la)$ is now $k$-quasi-triangular,
          with strictly regular diagonal blocks each of degree $d$.
          But $P(\la)$ as a whole may not yet be of degree $d$,
          because the off-diagonal blocks have not been kept under any control at all.

    \item The final step brings the degrees of the off-diagonal blocks back under control,
          while at the same time not disturbing the diagonal blocks in the process.
          Lemma~\ref{LemOffDiagClean} applied to $P(\la)$ achieves this,
          reducing the degree of each off-diagonal block to be strictly less than $d$,
          and leaving the diagonal blocks unchanged,
          to obtain the final desired realization 
          \begin{equation} \label{eqn.final-str-reg-realization}
             Q(\la) \;=\;
                     \left[\begin{array}{cccc} Q_{11}(\la) & Q_{12}(\la) & \dotsm & Q_{1\ell}(\la) \\
                                   & Q_{22}(\la) & \dotsm & Q_{2\ell}(\la) \\
                                   & & \ddots & \vdots \\
                                   & & & Q_{\ell\ell}(\la) \end{array}\right] \,.
          \end{equation}
          with $Q_{jj}(\la) = P_{jj}(\la)$ for $j = 1, \dotsc, \ell$.
  \end{itemize}
  This polynomial matrix $Q(\la)$ is $k$-quasi-triangular,
  has degree $d$,
  and is unimodularly equivalent to the original $S(\la)$,
  and hence has exactly the given finite spectral data.
  To see that $Q(\la)$ is strictly regular,
  we regard $\grade (Q)$ as being equal to the degree $d$;
  by Lemma~\ref{lem.grade-equal-degree}, any other choice
  will force $Q(\la)$ to have nontrivial infinite spectral structure.
  Since by construction the sum of the degrees
  of all of the invariant polynomials of $Q(\la)$ is $dn$,
  the Index Sum Theorem~\ref{thm.index-sum} immediately shows
  that the sum of the partial multiplicities of $Q(\la)$ at infinity must be zero,
  and hence that $Q(\la)$ is strictly regular.
\end{proof}

\begin{ejem}\label{ex.partition}
  We now bring the extended illustrative example
  (started in Example~\ref{ex.intro} and continuing
  through Examples~\ref{ex.factor-counting}, \ref{ex.stacking}, and \ref{ex.homogeneous-partition})
  to a culmination,
  using the proof of Theorem~\ref{thm.QTR-strictly-regular}
  to complete the construction of a strictly regular, $k$-quasi-triangular realization
  (over the field $\bF = \bZ_2$)
  of the given finite spectral data from back in Example~\ref{ex.intro}.
  Recall that $k=4$, the target size is $6 \times 6$ with degree $10$,
  and the three irreducible divisors in the original spectral data
  are $\,\chi(\la) = \la^4 + \la^3 + 1$, $\,\phi(\la) = \la^2 + \la + 1$, and $\,\psi(\la) = \la$.

  In Example~\ref{ex.homogeneous-partition},
  we found a homogeneous partitioning $(12,8\mid12,8\mid11,9)$ 
  of diagonal degrees,
  and corresponding permutation of diagonal entries to give us
  the target diagonal
  \begin{equation} \label{eqn.target-diagonal}
   \bigl(\, \chi^2\phi\psi^2, \,\chi\phi\psi^2, \,\chi^2\phi\psi^2,
                   \,\chi\phi\psi^2, \,\chi^2\phi\psi \,, \,\chi\phi^2\psi \,\bigr)
  \end{equation}
  for the upper triangular matrix \eqref{eqn.tri-partition1}
  in our construction.
  Now the theory we have developed guarantees
  that the $S(\la)$ in Example~\ref{ex.intro}
  can be unimodularly transformed
  into an upper triangular $\wh{T}(\la)$ such that $\diag \wh{T}(\la)$
  is exactly the vector in \eqref{eqn.target-diagonal}.
  And furthermore, that this transformation can be implemented
  as a finite sequence of embedded $2 \times 2$ unimodular transformations
  acting only on adjacent diagonal entries.
  It would be very tedious to display all of these transformations,
  and the resulting upper triangular $\wh{T}(\la)$
  is very likely to have a densely populated upper triangular part.
  So instead, for ease of exposition
  we exhibit an alternative $\wh{T}(\la)$ with the desired diagonal vector
  that is not only sparse,
  but is also easily checked to be unimodularly equivalent to $S(\la)$.
  \[
    \wh{T}(\la) \,=\, \left[
                    \begin{array}{cc|cc|cc}
                      \chi^2\phi\psi^2 & \chi\phi\psi & 0 & 0 & 0 & 0 \\
                                     0 & \chi\phi\psi^2 & 0 & 0 & 0 & 1 \\
                         \hline \\[-12pt]
                                       & & \mystrut{3.7mm} \chi^2\phi\psi^2 & 0 & 0 & 0 \\
                                       & & 0 & \chi\phi\psi^2 & \phi\psi & 0 \\
                         \hline \\[-12pt]
                                       & & & & \mystrut{3.7mm} \chi^2\phi\psi & 0 \\
                                       & & & & 0 & \chi\phi^2\psi
                    \end{array} \right]
  \]
  Observe that this $\wh{T}(\la)$ has been partitioned into blocks
  as in \eqref{eqn.tri-partition1},
  in a manner that conforms to the partitioning of the diagonal vector \eqref{eqn.partition-example}
  arising from the homogeneous partitioning of the diagonal degrees.


Each of the diagonal blocks of $\wh{T}(\la)$ can now be transformed
into 2$\times$2 blocks having degree 10,
via simple unimodular transformations.
Define the unimodular matrices
\[
  U_1 := \mat{cc} 1 & \phi + 1 \\ 0 & 1 \rix\,,
    \quad
  U_2 := U_1\,,
    \quad
  U_3 := \mat{cc} 1 & \psi \\ 0 & 1 \rix\,,
    \quad
  V_1 := \mat{cc} 1 & 0 \\ \psi^2 & 1 \rix\,,
    \quad
  V_2 := V_1\,, \;\;\text{ and }
    \;\;
  V_3 := U_3^T\,.
\]
Since the underlying field here is $\bF = \bZ_2$, we then have
\begin{align*}
  U_1 \wh{T}_{11} V_1 \,=\, \chi \phi \psi \cdot
  \left[ \begin{array}{cc} 1 & \phi + 1 \\ 0 & 1 \end{array} \right]
  \left[ \begin{array}{cc} \chi \psi & 1 \\ 0 & \psi \end{array} \right]
  \left[ \begin{array}{cc} 1 & 0 \\ \psi^2 & 1 \end{array} \right]
  & \,=\; \chi \phi \psi \cdot \left[ \begin{array}{cc} \phi + 1 & \phi \psi + \psi + 1 \\
                                                        \psi^3 & \psi  \end{array} \right] \,,
     \\[3pt]
  U_2 \wh{T}_{22} V_2 \,=\, \chi \phi \psi^2 \cdot
  \left[ \begin{array}{cc}1 & \phi + 1 \\ 0 & 1 \end{array} \right]
  \left[ \begin{array}{cc} \chi & 0 \\ 0 & 1 \end{array} \right]
  \left[ \begin{array}{cc} 1 & 0 \\ \psi^2 & 1 \end{array} \right]
  & \,=\; \chi \phi \psi^2 \cdot \left[ \begin{array}{cc} 1 & \phi + 1 \\
                                                     \psi^2 & 1 \end{array} \right] \,,
     \\[3pt]
  U_3 \wh{T}_{33} V_3 \,=\, \chi \phi \psi \cdot
  \left[ \begin{array}{cc} 1 & \psi \\ 0 & 1 \end{array} \right]
  \left[ \begin{array}{cc} \chi & 0 \\ 0 & \phi \end{array} \right]
  \left[ \begin{array}{cc} 1 & 0 \\ \psi & 1 \end{array} \right]
  & \,=\; \chi \phi \psi \cdot \left[ \begin{array}{cc} \psi^2 + 1 & \phi \psi \\
                                                         \phi \psi & \phi \end{array} \right] \,,
\end{align*}
which are each readily seen to have degree $10$.

 Applying these transformations collectively to all of $\wh{T}(\la)$
via $U := \diag \{ U_1, U_2, U_3 \}$ and $V := \diag \{ V_1, V_2, V_3 \}$
produces the 2-quasi-triangular realization
\[
  P(\la) :=\, U \wh{T}(\la) V
          = \left[\begin{array}{cc|cc|cc}
                \chi\phi\psi (\phi + 1) & \chi\phi\psi(\phi \psi + \psi +1) & 0 & 0 & \psi(\phi + 1) & \phi + 1 \\
                \chi\phi\psi^4 & \chi\phi\psi^2 & 0 & 0 & \psi & 1 \\
            \hline \\[-12pt]
                     &  & \mystrut{3.7mm} \chi\phi\psi^2 & \chi\phi\psi^2(\phi + 1) & \phi\psi(\phi + 1) & 0 \\
                     &  & \chi\phi\psi^4 & \chi\phi\psi^2 & \phi\psi & 0 \\
            \hline \\[-12pt]
                     &  &  &  & \mystrut{3.7mm} \chi\phi\psi (\psi^2 + 1) & \chi\phi^2\psi^2 \\
                     &  &  &  &  \chi\phi^2\psi^2 & \chi\phi^2\psi
\end{array}\right].
\]
with diagonal blocks that are all of degree $10$.
Observe that all of the off-diagonal blocks of $P(\la)$
have degree strictly less than $10$.
Hence we can skip the final step
(i.e., using Lemma~\ref{LemOffDiagClean} to reduce the degrees of the off-diagonal blocks)
of the general procedure,
and declare that our final degree $10$, strictly regular,
$2$-quasi-triangular realization $Q(\la)$
is identical to $P(\la)$ in this example.

 Finally, note that there are at least two ways to see that our final realization $Q(\la)$
is indeed strictly regular.
One is by an index sum argument -- taking $\grade Q$ to be equal to $\deg Q = 10$,
we see that there is no room in the index sum constraint \eqref{ISMeq}
for any infinite partial multiplicities to be nonzero,
hence $Q(\la)$ must be strictly regular.
The second way is simply to examine the leading coefficient of $Q(\la)$
as a matrix polynomial, i.e., the matrix coefficient of $\la^{10}$.
That matrix is easily seen to be just $\diag (R, R, R)$,
where $R$ is the $2 \times 2$ matrix
$\bigl[ \begin{smallmatrix} 0 & 1 \\ 1 & 0 \end{smallmatrix} \bigr]$;
clearly this coefficient is nonsingular.
Hence $Q(\la)$ is strictly regular (see comments just after Definition~\ref{def.strictly-regular}).
\end{ejem}

\section{Including infinite spectral data in a realization}
\label{sect.eigval-at-infinity}

In this section we extend the range of our quasi-triangular realization construction
to handle spectral data arising from a general regular matrix polynomial
--- i.e., data that may now include nontrivial infinite spectral structure.
The main tool that allows us to achieve this extension
are M\"obius transformations,
employing a now well-used technique;
see \cite{prescribed}, \cite{Mobius}, and \cite{TasTisZab}.
In a nutshell, the strategy of this technique is to
use an appropriately chosen M\"obius transformation
to translate a realization problem involving a mixture
of finite and infinite spectral data
into one that has \emph{only} finite spectral data,
solve this strictly regular realization problem,
and then translate the solution back to the original spectral data
using the inverse M\"obius transformation.

However, before we are able to implement this strategy,
it will be necessary to reexamine one of the fundamental properties
of M\"obius transformations from \cite{Mobius}, and recast 
it in a way that enables us to work with these transformations
more smoothly in the context of arbitrary fields $\bF$,
in particular in the presence of higher degree $\bF$-irreducible divisors
in spectral data.
This will be the task of Section~\ref{subsect.Mobius-and-irreducible divisors}.
With this reimagined property of M\"obius transformations in hand,
in Section~\ref{subsect.quasi-tri-with-infinite-structure}
we then solve the quasi-triangular realization problem
with nontrivial infinite spectral structure.
This gives us the final result needed to now easily prove
in Section~\ref{subsect.quasi-triangularization}
the featured result of the paper,
the quasi-triangularization of arbitrary regular matrix polynomials.

Before embarking on this final stage, though,
it is worth pointing out several subtle issues
that arise in the course of carrying out this strategy.
The first concerns an extra hypothesis that is only needed
if the field $\bF$ is finite,
and even then only to exclude very special types of spectral data set.
When $\bF$ is finite, then it is possible that \emph{every} element of $\bF$,
as well as $\infty$, may appear in a given spectral data set as an eigenvalue.
If that happens,
then any possible M\"obius transformation
that one might attempt to use
will simply map the set of eigenvalues bijectively to itself,
and thus will not be able to transform the given spectral data
into one that has only finite spectral data.
To exclude this one problematic scenario,
it has been necessary to include an additional hypothesis,
i.e., that there must be some element of $\bF$
that is \emph{not} an eigenvalue in the given spectral data.
Of course for any infinite field $\bF$,
this hypothesis always holds,
and so has no impact on the range of spectral data sets
to which the argument applies.
But for finite fields, having this one additional condition satisfied
is sufficient to make the rest of the argument work smoothly.
(When this condition is violated,
 it is not known whether quasi-triangular realizations exist or not.)

A second issue concerns the reduction of off-diagonal degrees
to be less than the target degree, as was done in the strictly regular case.
When there is nontrivial spectral structure at $\infty$,
such a reduction may not be possible.
The difficulty is that this reduction is achieved via unimodular transformations,
which may alter the spectral structure at $\infty$.
Indeed, Example~\ref{ex.full-reg} in Section~\ref{subsect.quasi-tri-with-infinite-structure}
gives a concrete illustration
of how not only the infinite spectral structure,
but even the overall degree of the realization itself,
can be spoiled by doing the kind of degree reduction
of off-diagonal blocks described in Lemma~\ref{LemOffDiagClean}.

\subsection{M\"obius transformation of spectral structure over an arbitrary field}
   \label{subsect.Mobius-and-irreducible divisors}

We begin this section by reviewing the effect of M\"obius transformations
on the spectral structure of a matrix polynomial,
in the manner developed in~\cite{Mobius}.
In that paper the emphasis was on scalars in the ambient field $\bF$
(plus $\infty$) as potential eigenvalues,
and so the development was best adapted to algebraically closed fields.
However, when working with matrix polynomials over arbitrary fields,
with irreducible divisors of higher degree,
the formulation in~\cite{Mobius} can be rather inconvenient,
even a bit clumsy to use.
Thus our aim in this section is to reformulate the relationship
between partial multiplicity sequences and M\"obius transformations
in such a way that it works smoothly and efficiently over all fields,
and is well-adapted to our extended notion of partial multiplicity sequences
for irreducible divisors of any degree, as defined in \eqref{eqn.PM}.
We begin, though, with a brief review of the relevant
concepts, notation, and results from~\cite{Mobius},
in particular Theorem~5.3 from that paper.

If $P(\la)$ is a matrix polynomial over $\bF$,
 and $\mu_0 \in \bF_{\infty}$ is any scalar in the ``extended'' field
$\bF_{\infty} := \bF \cup \{ \infty \}$,
then in~\cite{Mobius} the partial multiplicity sequence
associated with $\mu_0$ was denoted by $\cJ (P, \mu_0)$.
Note that the letter $\cJ$ was chosen there because
the partial multiplicity sequences for $P$ for \emph{all} scalars $\mu_0 \in \bF_{\infty}$
was collectively termed the ``Jordan characteristic'' of $P$.
In the current paper, the partial multiplicity sequence $\cJ (P, \mu_0)$
is associated with the degree one irreducible polynomial $\la - \mu_0$,
rather than with the scalar $\mu_0$.
Thus we have the following equivalence between notations:
\begin{equation} \label{eqn.notation -conversion}
  \cJ (P, \mu_0) \;\equiv\; \PM (P, \la - \mu_0) \,.
\end{equation}
Continuing the review of \cite{Mobius} ---
to any nonsingular
$A = \bigl[ \begin{smallmatrix} a & b \\ c & d \end{smallmatrix} \bigr]$ over $\bF$
there is an associated \emph{M\"obius function}
$\mf{A} (\la) : \bF_{\infty} \rightarrow \bF_{\infty}$,
which is a bijection of scalars given by the formula
\[
  \mf{A} (\la) \,:=\; \frac{a \la + b}{\, c \la + d \,} \,,
\]
and a M\"obius transformation on matrix polynomials of grade $g$
given by
\[
  \MT_A \bigl( P(\la) \bigr) \,:=\; (c \la + d)^g \,P\bigl( \mf{A} (\la) \bigr) \,.
\]
Note that this is exactly the same as in Definition~\ref{Mobius}.
Now a key result from \cite{Mobius} establishes a simple relationship
between the Jordan characteristics of $P$ and $\MT_A(P)$.

\begin{theorem}[Theorem 5.3 from \cite{Mobius}] \label{thm.Thm-from-mobius-paper} \quad \\
  Suppose $P(\la)$ is an $\mbyn$ matrix polynomial of grade $g$ over an arbitrary field $\,\bF$,
  and $A \in GL(2, \bF)$ is a nonsingular matrix
  with associated M\"obius function $\mf{A} (\la)$
  and M\"obius transformation $\MT_A$.
  Then for any $\mu_0 \in \bF_{\infty}$,
  \begin{equation} \label{eqn.Thm5.3.1}
    \cJ \bigl( \MT_A (P), \mu_0 \bigr) \;=\; \cJ \bigl( P, \mf{A}(\mu_0) \bigr) \,,
  \end{equation}
  or equivalently,
  \begin{equation} \label{eqn.Thm5.3.2}
    \cJ \bigl( \MT_A (P), \mf{A^{-1}} (\mu_0) \bigr) \;=\; \cJ \bigl( P, \mu_0 \bigr) \,.
  \end{equation}
\end{theorem}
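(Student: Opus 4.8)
The plan is first to observe that the two displayed identities are equivalent: substituting $\mu_0 \mapsto \mf{A^{-1}}(\mu_0)$ in \eqref{eqn.Thm5.3.1} and using that $\mf{A}$ and $\mf{A^{-1}}$ are mutually inverse bijections of $\bF_{\infty}$ produces \eqref{eqn.Thm5.3.2}, and the reverse substitution recovers \eqref{eqn.Thm5.3.1}. So it suffices to prove \eqref{eqn.Thm5.3.1}. I would then isolate a single core case and reduce everything else to it using the reversal operation.

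\textbf{The core case} is $\mu_0 \in \bF$ with $\nu_0 := \mf{A}(\mu_0) \in \bF$ also finite. The key point is that a partial multiplicity sequence is a purely local invariant. Localize at the prime $(\la - \mu_0)$, i.e.\ pass to the discrete valuation ring $R_{\mu_0} := \bF[\la]_{(\la-\mu_0)}$ with uniformizer $\la - \mu_0$. A global Smith form $U(\la)P(\la)V(\la) = S(\la) = \diag(s_1,\dots,s_{\min\{m,n\}})$ localizes with $U,V$ becoming units of $\mathrm{GL}(R_{\mu_0})$ (their determinants are nonzero constants), and writing $s_i(\la) = (\la-\mu_0)^{e_i}\wh s_i(\la)$ with $\wh s_i(\mu_0)\ne 0$ makes each $\wh s_i$ a unit of $R_{\mu_0}$; hence over $R_{\mu_0}$ the matrix $P(\la)$ is equivalent to $\diag\bigl((\la-\mu_0)^{e_1},\dots\bigr)$, and by uniqueness of the Smith form over a PID the sorted exponent list $(e_1,e_2,\dots)$ — of length $\rank P$, possibly with leading zeros — is exactly $\cJ(P,\mu_0) = \PM(P,\la-\mu_0)$. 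Now $\mf{A}$, regarded as the $\bF$-algebra isomorphism $\bF(\mu)\to\bF(\la)$, $\mu\mapsto\mf{A}(\la)$, carries the valuation ring $R_{\nu_0}$ isomorphically onto $R_{\mu_0}$: indeed $\mf{A}(\la) - \nu_0 = \dfrac{(a-c\nu_0)(\la-\mu_0)}{c\la+d}$, which is a uniformizer of $R_{\mu_0}$ because $c\mu_0+d \ne 0$ (as $\nu_0$ is finite) and $a-c\nu_0\ne 0$ (otherwise the rows of $A$ would be proportional). Transporting the $R_{\nu_0}$-Smith form of $P(\mu)$ at $\nu_0$ through this isomorphism, and observing that the extra factor $(c\la+d)^g$ in $\MT_A(P)(\la) = (c\la+d)^g P(\mf{A}(\la))$ is a unit of $R_{\mu_0}$, shows that $\MT_A(P)(\la)$ is equivalent over $R_{\mu_0}$ to $\diag\bigl((\la-\mu_0)^{e_1},\dots\bigr)$ with the \emph{same} exponents. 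Hence $\cJ(\MT_A(P),\mu_0) = (e_1,e_2,\dots) = \cJ(P,\nu_0)$, which is \eqref{eqn.Thm5.3.1} in this case. Here one uses that $\MT_A$ preserves rank, which is clear since multiplying by the nonzero scalar $(c\la+d)^g$ and substituting by the field automorphism $\mf{A}$ does not change the $\bF(\la)$-rank.

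\textbf{Reduction of the remaining cases.} When $\mu_0 = \infty$ or $\mf{A}(\mu_0) = \infty$, I would push back to the core case via the \emph{definition} $\cJ(Q,\infty) := \cJ(\rev_g Q, 0)$ together with the one-line identities, obtained directly from \eqref{eqn.Mobius-defn},
\[
  \MT_A(P)(\la) \,=\, \MT_{A'}(\rev_g P)(\la),\qquad A' = \mat{cc} c & d \\ a & b \rix,
\]
\[
  \rev_g\bigl(\MT_A(P)\bigr)(\la) \,=\, \MT_{A''}(P)(\la),\qquad A'' = \mat{cc} b & a \\ d & c \rix.
\]
For example, if $\mu_0\in\bF$ but $\mf{A}(\mu_0)=\infty$, then $\mf{A'}(\mu_0)=0$ is finite, so the core case applied to $\rev_g P$ and $A'$ gives $\cJ(\MT_A(P),\mu_0) = \cJ(\MT_{A'}(\rev_g P),\mu_0) = \cJ(\rev_g P, 0) = \cJ(P,\infty) = \cJ(P, \mf{A}(\mu_0))$; the case $\mu_0 = \infty$ with $\mf A(\infty)$ finite is symmetric using $A''$ at the point $0$; and the single leftover case $\mu_0 = \infty = \mf{A}(\infty)$ (that is, $c=0$) reduces via $A''$ to the already-settled case ``$\mu_0 = 0$, image $\infty$''. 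These are all routine once the core case is established. Alternatively, one could factor $A$ into affine matrices together with the swap $\bigl[\begin{smallmatrix}0&1\\1&0\end{smallmatrix}\bigr]$ and verify the claim for those generators, but the reduction above is more direct.

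\textbf{Main obstacle.} Essentially all the content is in the core case, and its two delicate points are: (i) identifying the partial multiplicity sequence $\cJ(P,\mu_0)$ with the Smith-form exponents of $P$ over the DVR $R_{\mu_0}$, matching the \emph{sequence} structure (sortedness, length $=\rank$, leading zeros) against the multiset of DVR exponents; and (ii) checking that the $\bF(\la)$-automorphism induced by $\mf{A}$ genuinely restricts to an isomorphism of the relevant valuation rings when it maps one finite place to another. Neither is hard, but both rely on the non-monic / scalar-multiple conventions adopted in Section~\ref{pre}, so that factors such as $c\la+d$ or $(a-c\nu_0)(\la-\mu_0)$ cause no bookkeeping trouble. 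The passage to DVRs is precisely what lets one avoid the awkward grade bookkeeping — tracking the power of $c\la+d$ needed to keep things polynomial — that plagues a naive ``substitute into the global Smith form'' argument.
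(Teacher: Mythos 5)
This theorem is not proved in the paper at all --- it is imported verbatim as Theorem~5.3 of \cite{Mobius}, so there is no in-paper proof to compare against. Judged on its own, your argument is correct and self-contained. The localization step is the right way to make ``partial multiplicities are local invariants'' precise: unimodular $U,V$ stay invertible over $R_{\mu_0}$, the units $\wh s_i$ absorb everything coprime to $\la-\mu_0$, and uniqueness of the Smith form over a DVR identifies the exponent list with $\cJ(P,\mu_0)$. The computation $\mf{A}(\la)-\nu_0=(a-c\nu_0)(\la-\mu_0)/(c\la+d)$ checks out (using $\nu_0(c\mu_0+d)=a\mu_0+b$), as do the nonvanishing claims $c\mu_0+d\ne0$ and $a-c\nu_0\ne0$, so the substitution really does carry $R_{\nu_0}$ isomorphically onto $R_{\mu_0}$ and the factor $(c\la+d)^g$ is a unit there. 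The two reversal identities $\MT_A(P)=\MT_{A'}(\rev_g P)$ and $\rev_g(\MT_A(P))=\MT_{A''}(P)$ are correct (both $A'$ and $A''$ have determinant $-\det A\ne0$), and your three reduction cases together with the core case do exhaust the four combinations of finite/infinite for $\mu_0$ and $\mf{A}(\mu_0)$. What this route buys, compared with arguments that manipulate the global Smith form or factor $A$ into generators, is exactly what you say: all grade bookkeeping with powers of $c\la+d$ disappears because those powers are units at the place under consideration. The one point worth stating explicitly rather than in passing is rank preservation, since the \emph{length} of $\cJ$ and the count of leading zeros depend on it; your one-line justification (field automorphism plus a nonzero scalar factor) is adequate.
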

The formulas in Theorem~\ref{thm.Thm-from-mobius-paper} have one glaring drawback
when trying to capture the effect of a M\"obius transformation
on the complete spectral structure of a matrix polynomial $P$
over a field $\bF$ that is \emph{not} algebraically closed.
The problem is that the irreducible divisors of $P$
may all have degree $2$ or larger,
and thus have \emph{no} eigenvalues in $\bF_{\infty}$ at all.
In this scenario the formulas in Theorem~\ref{thm.Thm-from-mobius-paper}
tell us nothing at all about the spectral effects of the M\"obius transformation.
At least not directly.
The only way to recover any information about the spectral effects of $\MT_A$
would be to pass to the algebraic closure $\ov{\bF}$,
which though feasible,
may become an unwelcome annoyance.
It would be very useful to instead
have some analog of Theorem~\ref{thm.Thm-from-mobius-paper}
that applies directly to partial multiplicity sequences
associated with irreducible divisors of any degree,
i.e., to the $\PM (P, \chi)$ defined earlier in \eqref{eqn.PM}.
There is indeed such an analog,
and the goal of the remainder of this section is to establish
this extension of Theorem~\ref{thm.Thm-from-mobius-paper}.
We claim that the following relationship holds:

\begin{equation} \label{eqn.NewFormula}
  \PM \bigl( \MT_A(P), \MT_A(\chi) \bigr) \;=\; \PM (P, \chi) \,.
\end{equation}
To properly interpret this formula, however,
two conventions must be observed:
\begin{itemize}
  \item On the left-hand side of \eqref{eqn.NewFormula},
        $\MT_A(P)$ is taken with respect to the specified grade for $P$,
        but $\MT_A (\chi)$ should always be taken with \emph{$\grade \chi$ equal to $\deg \chi$},
        with just the one exception described next.
  \item If $\chi(\la) = \beta$ is any \emph{nonzero} constant,
        then $\chi$ is to be regarded as a \emph{grade one} polynomial,
        i.e., as $\alpha \la + \beta$ with $\alpha = 0$.
\end{itemize}
Note that the naive intuition underlying this second convention
is that the ``root'' of $0 \la + \beta$ is $\la = -\beta/0 = \infty$,
and so $0 \la + \beta$ can play the role of a ``polynomial stand-in''
for an eigenvalue at $\infty$.
Thus we will now use the notation $\PM(P, 0\la + \beta)$
to replace the earlier temporary notation $\PM(P, \infty)$
for the partial multiplicity sequence associated with an eigenvalue at $\infty$.
We will see in Theorem~\ref{thm.improved-Thm5.3} and its proof
that this now makes the formula \eqref{eqn.NewFormula}
internally consistent and universally applicable
to all partial multiplicity sequences,
both for finite and for infinite spectral structure.
Some additional motivation and justification for this
(perhaps unexpected?)
second convention is given in Remark~\ref{rem.commutative-diagram}.

\begin{remark} \label{rem.commutative-diagram}
\rm
Let $\cG_1$ denote the set of all \emph{nonzero} grade one scalar (i.e., $1 \times 1$) polynomials
$L(\la) = \alpha \la + \beta$ over $\bF$.
It is well known \cite{Mobius} that any M\"obius transformation $\MT_A$
defines a bijection $\cG_1 \rightarrow \cG_1$,
and the associated M\"obius function $\mf{A}$
defines a bijection $\bF_{\infty} \rightarrow \bF_{\infty}$
on the extended field of scalars $\bF_{\infty}$.
The purpose of this remark is to briefly explore the parallelism
between the bijection $\MT_A$ and the bijection $\mf{A^{-1}}$
(\emph{not} $\mf{A}$).

What underlies this parallelism is the simple calculation of $\MT_A (\la - \mu_0)$,
which also plays a central role in the proof of Theorem~\ref{thm.improved-Thm5.3}.
Using Definition~\ref{Mobius}, it is not hard to see that $\MT_A (\la - \mu_0)$
is (usually) a nonzero scalar multiple of $\la - \mf{A^{-1}}(\mu_0)$,
with the one possible exception of being just a nonzero scalar.
Now recall that in the context of discussing spectral data,
we are associating the degree one $\bF$-irreducible $(\la - \mu_0)$
with the eigenvalue $\mu_0$.
So we see that the mapping of degree one irreducibles by $\MT_A$
is mirrored by the mapping of eigenvalues in $\bF$ by $\mf{A^{-1}}$,
at least for most $\mu_0$.

To make this parallelism more precise,
let us define a map $\rho: \cG_1 \rightarrow \bF_{\infty}$
that sends $\alpha \la + \beta \in \cG_1$ with $\alpha \ne 0$
to its root $\frac{\, -\beta\,}{\alpha} \in \bF$,
i.e., define $\rho(\alpha \la + \beta) := \frac{\, -\beta\,}{\alpha}$.
This specifies $\rho$ on most of the desired domain $\cG_1$,
and enables us to express the parallelism in the diagram
\begin{equation} \label{eqn.commutative-diagram}
  \begin{CD}
    \cG_1         @>\MT_A>>   \cG_1 \\
    @V{\rho}VV                 @VV{\rho}V \\
    \bF_{\infty}  @>\mf{A^{-1}}>> \bF_{\infty}
  \end{CD} \qquad .
\end{equation}
 For all elements of $\cG_1$ with $\alpha \ne 0$ (with at most one exception),
we see that the diagram \eqref{eqn.commutative-diagram} commutes.
But what about elements of $\cG_1$ with $\alpha = 0$
(and hence $\beta \ne 0$), and the one possible exceptional case
when $\MT_A (\la - \mu_0)$ is just a nonzero scalar?
Is there a ``natural'' way to complete the definition of the map $\rho$
to the whole domain $\cG_1$,
and to do so in such a way
that \eqref{eqn.commutative-diagram} commutes?
There is indeed a unique way to achieve this,
and that is to define $\rho(0 \la + \beta) := \infty \in \bF_{\infty}$,
for any $\beta \ne 0$.
Thus we see from another direction
why it is natural to associate the grade $1$ polynomial $0 \la + \beta$
with an eigenvalue at $\infty$.
Note that the commuting of the diagram \eqref{eqn.commutative-diagram}
holds for any $A \in GL(2, \bF)$
and its associated M\"obius transformation and function.

The simplest (and perhaps most revealing) example of \eqref{eqn.commutative-diagram}
uses $\MT_R$ with $R = \bigl[ \begin{smallmatrix} 0 & 1 \\ 1 & 0 \end{smallmatrix} \bigr]$
i.e., $\MT_R = \rev_1$.
In this case with $\alpha \ne 0$ and $\beta \ne 0$
we have $\MT_R (\alpha \la + \beta) = \beta \la + \alpha$
with reciprocal roots $\frac{\, -\beta\,}{\alpha}$ and $\frac{\, -\alpha\,}{\beta}$, respectively,
and also $\mf{R^{-1}} (\mu_0) = \mf{R} (\mu_0) = \frac{1}{\,\mu_0 \,}$
is the reciprocal map.
So clearly everything in the diagram commutes when $\alpha$ and $\beta$
are \emph{both} nonzero.
But if $\alpha=0$,
then with our convention we have $\MT_R(0 \la + \beta) = \beta \la$,
with roots $\infty$ and $0$,
which are commonly regarded in the literature as being ``reciprocal''.
Thus we see directly when $A=R$ that \eqref{eqn.commutative-diagram} commutes
for all elements of $\cG_1$.

It is also useful to note that \eqref{eqn.commutative-diagram} still commutes
if we pass to the equivalence classes discussed
earlier in Remark~\ref{rem.implicit-equiv-relation}.
That is, let us declare that two $\bF$-irreducible polynomials
are equivalent if one is a nonzero scalar multiple of the other,
i.e., $\chi \sim c \chi$ for any nonzero scalar $c \in \bF$.
Letting $\wt{\cG}_1$ denote the resulting set of equivalence classes
of grade one polynomials over $\bF$,
then both of the maps $\MT_A$ and $\rho$
respect these equivalence classes,
and so induce well-defined quotient maps $\wt{\MT}_A$ and $\wt{\rho}$,
which are both bijections.
This, then, gives us the following commutative diagram
in which all of the mappings are bijective.
\begin{equation} \label{eqn.commutative-diagram -with-bijections}
  \begin{CD}
    \wt{\cG}_1     @>\wt{\MT}_A>>   \wt{\cG}_1 \\
    @V{\wt{\rho}}VV                 @VV{\wt{\rho}}V \\
    \bF_{\infty}  @>\mf{A^{-1}}>> \bF_{\infty}
  \end{CD}
\end{equation}

 Finally, note that with our conventions, the scenario $\MT_A(\chi) = \beta$
\emph{cannot} arise in \eqref{eqn.NewFormula}
if the $\bF$-irreducible $\chi$ has $\deg (\chi) \ge 2$;
it can only occur if $\chi \in \cG_1$.
This is not obvious, but follows from a property to be proved
in Lemma~\ref{lem.degree-preservation},
namely that for any $\bF$-irreducible $\chi$ with $\deg (\chi) \ge 2$,
and $\MT_A(\chi)$ taken with respect to degree,
then $\deg \MT_A (\chi) = \deg (\chi)$.
\end{remark}

Before proceeding to prove \eqref{eqn.NewFormula} in Theorem~\ref{thm.improved-Thm5.3},
we first establish some preliminary lemmas.
The first of these lemmas appeared in \cite{Mobius} as Corollary 3.24a,
but we recall it here together with its simple proof
for the convenience of the reader.

\begin{lemma}[Basic product property of M\"obius transformations]
              \label{lem.product-property} \quad \\
  Let $p, q \in \bF[\la]$ be nonzero scalar polynomials,
  and let $A = \bigl[ \begin{smallmatrix} a & b \\ c & d \end{smallmatrix} \bigr] \in GL(2, \bF)$.
  Letting the grades of $p$, $q$, and $pq$ all be chosen to be equal to their degrees,
  then we have $\MT_A (pq) = \MT_A(p) \MT_A(q)$.
\end{lemma}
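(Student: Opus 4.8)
The plan is to verify the identity directly from the definition~\eqref{eqn.Mobius-defn}, using the single nontrivial fact that over a field the degree is additive on products. Write $A = \bigl[ \begin{smallmatrix} a & b \\ c & d \end{smallmatrix} \bigr]$, put $g_1 := \deg p$ and $g_2 := \deg q$, and recall that since $\bF[\la]$ is an integral domain we have $\deg(pq) = g_1 + g_2$; hence the grade assigned to $pq$ under the stated convention is exactly $g_1 + g_2$.

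First I would record the elementary observation that $\MT_A$ applied to a nonzero scalar polynomial of degree $n$ really does yield a polynomial rather than a proper rational function: expanding $p(\la) = \sum_{i=0}^{g_1} p_i \la^i$ gives
\[
  \MT_A(p)(\la) \;=\; (c\la+d)^{g_1} \sum_{i=0}^{g_1} p_i \, \frac{(a\la+b)^i}{(c\la+d)^i}
              \;=\; \sum_{i=0}^{g_1} p_i\, (a\la+b)^i (c\la+d)^{g_1-i} \,,
\]
and similarly for $q$. With both sides thus identified as honest elements of $\bF[\la]$, I would then simply compute. Using $\grade(pq) = g_1 + g_2$,
\[
  \MT_A(pq)(\la) \;=\; (c\la+d)^{g_1+g_2}\, p\Bigl(\frac{a\la+b}{c\la+d}\Bigr)\, q\Bigl(\frac{a\la+b}{c\la+d}\Bigr) \,,
\]
whereas
\[
  \MT_A(p)(\la)\,\MT_A(q)(\la)
    \;=\; (c\la+d)^{g_1} p\Bigl(\frac{a\la+b}{c\la+d}\Bigr)\cdot (c\la+d)^{g_2} q\Bigl(\frac{a\la+b}{c\la+d}\Bigr)
    \;=\; (c\la+d)^{g_1+g_2}\, p\Bigl(\frac{a\la+b}{c\la+d}\Bigr)\, q\Bigl(\frac{a\la+b}{c\la+d}\Bigr) \,,
\]
so the two expressions agree. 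If one prefers to avoid rational functions altogether, the same conclusion drops out of the polynomial forms from the first step: the product of $\sum_i p_i (a\la+b)^i (c\la+d)^{g_1-i}$ with $\sum_j q_j (a\la+b)^j (c\la+d)^{g_2-j}$ equals $\sum_{\ell} \bigl(\sum_{i+j=\ell} p_i q_j\bigr) (a\la+b)^{\ell} (c\la+d)^{(g_1+g_2)-\ell}$, which is precisely the polynomial form of $\MT_A(pq)$, since $\sum_{i+j=\ell} p_i q_j$ is the coefficient of $\la^\ell$ in $pq$.

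I do not expect any genuine obstacle here; the only point that needs a word of explanation is the role of the hypothesis. If the grades were not all taken equal to the degrees --- for instance if $\grade(pq)$ were chosen strictly larger than $\grade p + \grade q$ --- then the powers of $(c\la+d)$ produced on the two sides would no longer match, and the identity would fail by a spurious factor of $(c\la+d)$. So the proof does essentially use that $\grade$ is additive over products, which over a field coincides with additivity of degree; I would close by flagging this as the reason the lemma carries the explicit grade convention.
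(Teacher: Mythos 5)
Your proof is correct and follows essentially the same route as the paper's: both split $(c\la+d)^{d_1+d_2}$ into $(c\la+d)^{d_1}(c\la+d)^{d_2}$ and identify the two factors from the definition, using the additivity of degree on products. The extra polynomial-form verification and the closing remark on why the grade-equals-degree convention is needed are fine but not required.
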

\begin{proof}
  Let $d_1 = \deg p$ and $d_2 = \deg q$.
  Then from Definition~\ref{Mobius}  we have
  \begin{align*}
    \MT_A (pq) &\;=\; (c \la + d)^{d_1 + d_2} \,p\bigl( \mf{A} (\la) \bigr) \,q\bigl( \mf{A} (\la) \bigr) \\
               &\;=\; (c \la + d)^{d_1} \,p\bigl( \mf{A} (\la) \bigr)
                     \cdot (c \la + d)^{d_2} \,q\bigl( \mf{A} (\la) \bigr)
                \;=\; \MT_A(p) \, \MT_A(q) \,.
  \end{align*}
\end{proof}

The next result was mentioned in Section~\ref{pre}
under the name Lemma~\ref{lem.degree-preservation-without-proof}.
We recall it here and provide the proof postponed from earlier.

\begin{lemma} \label{lem.degree-preservation}
  Suppose $\bF$ is an arbitrary field,
  and $\chi(\la)$ is any $\bF$-irreducible scalar polynomial with $\deg \chi \ge 2$.
  Let $\MT_A$ be the M\"obius transformation
  associated with any
  $A = \bigl[ \begin{smallmatrix} a & b \\ c & d \end{smallmatrix} \bigr] \in GL(2, \bF)$.
  Then with $\grade(\chi)$ taken to be equal to $\deg(\chi)$,
  the transformation $\MT_A$ preserves both the degree and the $\bF$-irreducibility of $\chi$.
  That is, $\MT_A (\chi)$ is $\bF$-irreducible,
  and $\,\deg \bigl(\MT_A (\chi) \bigr) = \deg(\chi)$.
\end{lemma}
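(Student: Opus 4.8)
The plan is to work directly with the explicit formula
\[
  \MT_A(\chi)(\la) \;=\; (c\la+d)^n\,\chi\!\left(\tfrac{a\la+b}{c\la+d}\right)
      \;=\; \sum_{i=0}^{n} c_i\,(a\la+b)^i (c\la+d)^{n-i},
\]
where $\chi(\la)=\sum_{i=0}^n c_i\la^i$ with $c_n\ne 0$ and $n=\deg\chi\ge 2$, and $A=\bigl[\begin{smallmatrix}a&b\\c&d\end{smallmatrix}\bigr]$.
First I would settle degree preservation. The displayed sum is visibly a polynomial of degree at most $n$, and its coefficient of $\la^n$ is $\sum_{i=0}^n c_i a^i c^{n-i}$. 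If $c=0$, nonsingularity of $A$ forces $a\ne 0$, so this coefficient is $c_n a^n\ne 0$; if $c\ne 0$, it equals $c^n\chi(a/c)$, and since an $\bF$-irreducible polynomial of degree $\ge 2$ has no root in $\bF$, we get $\chi(a/c)\ne 0$. Either way $\deg\MT_A(\chi)=n=\deg\chi$. (This is the single place where $\deg\chi\ge 2$ is genuinely used.)

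Next I would prove $\bF$-irreducibility by contradiction. Suppose $\MT_A(\chi)=f(\la)g(\la)$ with $f,g\in\bF[\la]$ and $\deg f,\deg g\ge 1$; by the degree computation just done, $\deg f+\deg g=n$. Assign grades equal to degrees throughout, so that $\grade(fg)=\deg f+\deg g=\deg(fg)$; then the Basic Product Property (Lemma~\ref{lem.product-property}), applied to $A^{-1}\in GL(2,\bF)$, yields $\MT_{A^{-1}}(fg)=\MT_{A^{-1}}(f)\,\MT_{A^{-1}}(g)$. Since $\MT_A(\chi)$, being defined with $\grade\chi=\deg\chi=n$, is a grade-$n$ polynomial of degree $n$, and $\MT_{A^{-1}}$ is the inverse of $\MT_A$ on grade-$n$ polynomials~\cite{Mobius}, the left-hand side is simply $\chi$. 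Hence $\chi=\MT_{A^{-1}}(f)\,\MT_{A^{-1}}(g)$. From the defining formula, $\deg\MT_{A^{-1}}(h)\le\deg h$ for every nonzero $h$ with grade equal to degree, so $\deg\MT_{A^{-1}}(f)+\deg\MT_{A^{-1}}(g)\le\deg f+\deg g=n$; but this sum equals $\deg\chi=n$ because $\bF[\la]$ has no zero divisors, so both inequalities are equalities and $\deg\MT_{A^{-1}}(f)=\deg f\ge 1$, $\deg\MT_{A^{-1}}(g)=\deg g\ge 1$. This is a nontrivial factorization of $\chi$ over $\bF$, contradicting its irreducibility; therefore $\MT_A(\chi)$ is $\bF$-irreducible.

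The main obstacle, and the point that requires care, is not any heavy computation but the bookkeeping between \emph{grade} and \emph{degree}: one must use the grade-$n$ representative of $\MT_A(\chi)$ so that both Lemma~\ref{lem.product-property} and the inversion identity $\MT_{A^{-1}}\circ\MT_A=\mathrm{id}$ apply, and then guard against the genuine phenomenon that $\MT_{A^{-1}}$ can strictly lower the degree of an \emph{arbitrary} polynomial (collapsing a factor to a constant). The degree-counting step above is precisely what rules out such a spurious drop here; once it is in place, everything else is routine.
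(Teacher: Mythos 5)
Your proof is correct, but it takes a genuinely different route from the paper's. For degree preservation, the paper passes to the algebraic closure $\ov{\bF}$, factors $\chi$ into linear factors with roots $r_j \in \ov{\bF}\setminus\bF$, applies the product property of Lemma~\ref{lem.product-property} to write $\MT_A(\chi)$ as a product of the $\MT_A(\la-r_j)=(a-cr_j)\la+(b-dr_j)$, and observes that no degree can drop because $r_j=a/c$ would force $r_j\in\bF$. You instead stay entirely inside $\bF$ and read off the leading coefficient $\sum_i c_i a^i c^{n-i}$, which equals $c^n\chi(a/c)$ when $c\ne 0$ and $c_na^n$ when $c=0$; the hypothesis $\deg\chi\ge 2$ enters only through ``$\chi$ has no root in $\bF$,'' exactly as in the paper but without leaving the ground field. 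For irreducibility, the paper simply cites \cite[Corollary~3.24]{Mobius}, whereas you give a self-contained argument: push a putative factorization back through $\MT_{A^{-1}}$ using Lemma~\ref{lem.product-property} and the inversion identity $\MT_{A^{-1}}\circ\MT_A=\mathrm{id}$ on grade-$n$ polynomials, then use a degree count to rule out either factor collapsing to a constant. Your careful handling of grade versus degree in that step is exactly the point that needs guarding, and you guard it correctly. What each approach buys: yours is more elementary and self-contained (no algebraic closure, no external corollary); the paper's closure-based computation is reused almost verbatim in the proof of Theorem~\ref{thm.improved-Thm5.3}, so the authors get some economy by setting it up here.
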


\begin{proof}
  That $\MT_A$ preserves $\bF$-irreducibility was shown in~\cite[Corollary~3.24]{Mobius}.
  To see why degree is preserved, we look at $\chi(\la)$
  over the algebraic closure $\ov{\bF}$ of $\bF$.
  Over $\ov{\bF}$ we may factor $\chi$ completely into linear factors,
  i.e., $\,\chi(\la) \,=\, k (\la - r_1)(\la - r_2) \dotsb (\la - r_d)$,
  where $d = \deg \chi \ge 2$ and $k \in \bF$.
  Now observe that for $\chi$ to be $\bF$-irreducible,
  \emph{all} of these roots $r_i$ must be in $\ov{\bF} \setminus \bF$,
  otherwise $\chi$ would be $\bF$-reducible.
  Computing $\MT_A (\chi)$ with grade equal to degree $d$,
  and using the multiplicative property of M\"obius transformations
  in Lemma~\ref{lem.product-property},
  we obtain
  \[
    \MT_A (\chi) \;=\; k \,\MT_A (\la - r_1) \cdot \MT_A (\la - r_2) \dotsb \MT_A (\la - r_d) \,,
  \]
  where each of these transformations $\MT_A (\la - r_j)$ is computed with respect to grade $1$.
  Using the definition, we easily see that
  \begin{equation} \label{eqn.Mobius-of-linear}
    \MT_A(\la - r_j) \,=\, (a - c r_j) \la \,+\, (b - d r_j) \,.
  \end{equation}
  Now consider two cases: $c=0$ and $c \ne 0$.
  If $c=0$, then since $A$ is nonsingular we must have $a \ne 0$,
  so each of the factors $\MT_A(\la - r_j)$ will have degree one,
  and $\MT_A (\chi)$ will have degree $d = \deg \chi$.
  On the other hand, if $c \ne 0$,
  then once again each of these $\MT_A(\la - r_j)$ will have degree one,
  and hence $\deg \bigl( \MT_A (\chi) \bigr) = d = \deg \chi$,
  unless one of the roots $r_j$ satisfies $a - c r_j = 0$,
  i.e., $r_j = \tfrac{a}{c}$.
  This, however, is impossible,
  since $\tfrac{a}{c} \in \bF$, but $r_j \in \ov{\bF} \setminus \bF$.
\end{proof}


\begin{lemma} \label{lem.no-common-roots}
  Suppose $\,\bF$ is an arbitrary field,
  and $\,\wh{\bF} \supseteq \bF$ is any field extension of $\,\bF$.
  Then polynomials $p,q \in \bF[\la]$ are coprime in $\,\bF[\la]$
  if and only if they are coprime in $\,\wh{\bF}[\la]$.
  More specifically, if $p,q \in \bF[\la]$ are distinct $\bF$-irreducible polynomials,
  then as polynomials over the algebraic closure $\,\ov{\bF}$,
  $p$ and $q$ have no roots in common.
\end{lemma}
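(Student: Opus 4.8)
The plan is to reduce the whole statement to one standard fact --- that the greatest common divisor of two polynomials is unchanged, up to a nonzero scalar, under passage to a field extension --- and then to translate coprimality over $\ov{\bF}$ into a statement about common roots.

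First I would record the key observation: for $p,q \in \bF[\la]$ not both zero, a gcd of $p$ and $q$ computed in $\bF[\la]$ is again a gcd of $p$ and $q$ in $\wh{\bF}[\la]$. The quickest justification uses B\'ezout's identity in the Euclidean domain $\bF[\la]$: there exist $a,b \in \bF[\la]$ with $ap + bq = \delta$, where $\delta$ denotes a (say, monic) gcd of $p$ and $q$ in $\bF[\la]$. This polynomial identity has all its coefficients in $\bF$, so it remains valid in $\wh{\bF}[\la]$; hence in $\wh{\bF}[\la]$ every common divisor of $p$ and $q$ divides $\delta$, while $\delta$ clearly still divides both $p$ and $q$. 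Thus $\delta$ is also a gcd over $\wh{\bF}$. (Alternatively, uniqueness of quotient and remainder in polynomial division shows that the Euclidean algorithm applied to $p,q$ runs identically over $\bF$ and over $\wh{\bF}$, and so returns the same gcd.) From this the first assertion of the lemma is immediate: $p$ and $q$ are coprime in $\bF[\la]$ precisely when a gcd of them is a nonzero constant, and by the previous sentence this occurs over $\bF$ if and only if it occurs over $\wh{\bF}$.

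For the ``more specifically'' part I would first note that distinct $\bF$-irreducible polynomials $p,q$ are automatically coprime in $\bF[\la]$: a gcd $d$ of $p$ and $q$ divides the irreducible $p$, so $d$ is either a nonzero constant or a nonzero scalar multiple of $p$; in the latter case $p \mid q$, which, as $q$ is irreducible, forces $q$ to be a scalar multiple of $p$ --- contradicting distinctness. Hence $d$ is constant, i.e.\ $p$ and $q$ are coprime in $\bF[\la]$. Applying the first assertion with $\wh{\bF} = \ov{\bF}$ shows they are coprime in $\ov{\bF}[\la]$. Since $\ov{\bF}$ is algebraically closed, coprimality of $p$ and $q$ there is equivalent to their having no common root: a shared root $r$ would produce the common factor $\la - r$, while conversely any nonconstant common divisor has a root in $\ov{\bF}$, which is then a common root of $p$ and $q$. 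This gives the desired conclusion.

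The only step that needs any care is the extension-invariance of the gcd; once that is in hand, everything else is bookkeeping about irreducibles and about roots over an algebraically closed field, so I do not anticipate a genuine obstacle.
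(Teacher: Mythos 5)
Your proposal is correct and follows essentially the same route as the paper: both arguments hinge on the persistence of the B\'ezout identity under field extension, with the remaining claims (distinct irreducibles are coprime; coprimality over $\ov{\bF}$ means no common roots) handled as immediate consequences. Your version is slightly more systematic in that it establishes invariance of the gcd itself rather than just of coprimality, but this is a refinement of the same idea, not a different proof.
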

\begin{proof}
  Suppose $p$ and $q$ are coprime in $\,\bF[\la]$.
  Then $p$ and $q$ satisfy the Bezout identity $pr + qt \equiv 1$,
  for some polynomials $r, t \in \bF[\la]$.
  This identity still holds for $p, q, r, t$ viewed as polynomials over $\wh{\bF}$,
  so $p$ and $q$ are coprime in $\wh{\bF}[\la]$.
  Conversely, if $p$ and $q$ are coprime in $\wh{\bF}[\la]$,
  then they have no nontrivial common factor in $\wh{\bF}[\la]$,
  and hence no nontrivial common factor in the smaller ring $\bF[\la]$,
  and thus are coprime in $\bF[\la]$.
  The statement about $\bF$-irreducible polynomials $p$ and $q$ now follows immediately,
  since \emph{distinct} $\bF$-irreducibles are necessarily coprime in $\bF[\la]$.
\end{proof}

We are now in a position to prove the main result of this section,
i.e., the formula that describes the effect of M\"obius transformations
on spectral structure.
It is important to emphasize that this formula holds in complete generality
for \emph{all} matrix polynomials
(regular or singular, of any size, over arbitrary fields),
for \emph{all} irreducible divisors (of any degree),
and for \emph{all} M\"obius transformations $\MT_A$,
whatever the underlying (nonsingular) matrix $A$.
Note also that the proof makes extensive use
of the previously-known formulas
in Theorem~\ref{thm.Thm-from-mobius-paper}.

\begin{theorem}[Effect of M\"obius on spectral structure over arbitrary fields]
  \label{thm.improved-Thm5.3} \quad \\
  Let $P(\la)$ be any grade $g$ matrix polynomial over $\,\bF$,
  where $\bF$ is an arbitrary field.
  Also let $A = \bigl[ \begin{smallmatrix} a & b \\ c & d \end{smallmatrix} \bigr] \in GL(2, \bF)$
  be any nonsingular matrix over $\bF$,
  with associated M\"obius transformation $\MT_A$.
  Then for any $\bF$-irreducible scalar polynomial $\chi(\la)$,
  including the grade one polynomial $(0@\la + \beta)$ with $\beta \ne 0$,
  we have
  \begin{equation} \label{eqn.improved-Mobius-relation}
    \PM \bigl( \MT_A(P), \MT_A(\chi) \bigr) \;=\; \PM (P, \chi) \,.
  \end{equation}
  \parens{Here $\MT_A(P)$ is taken with respect to grade $g$,
   while each $\MT_A(\chi)$ is taken with grade equal to $\deg \chi$,
   with the sole exception of the grade one
   $\chi(\la) = @0@\la + \beta$, with $\beta \ne 0$.}
\end{theorem}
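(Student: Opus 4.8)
\emph{Proof plan.} The plan is to reduce \eqref{eqn.improved-Mobius-relation} to the already-known formulas of \thmref{thm.Thm-from-mobius-paper}, treating the case $\deg\chi\le 1$ (which includes the convention polynomial $0@\la+\beta$) and the case $\deg\chi\ge 2$ separately.

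First I would dispose of the case $\deg\chi\le 1$. Here $\chi$ is either $\la-\mu_0$ with $\mu_0\in\bF$, to which we attach the ``eigenvalue'' $\mu_0$, or the grade-one polynomial $0@\la+\beta$ with $\beta\ne 0$, to which we attach $\mu_0:=\infty$. A short computation from Definition~\ref{Mobius} shows that in every subcase $\MT_A(\chi)$ is a \emph{nonzero scalar multiple} of $\la-\mf{A^{-1}}(\mu_0)$, under the convention that when $\mf{A^{-1}}(\mu_0)=\infty$ this ``polynomial'' is read as a grade-one nonzero constant $0@\la+\gamma$; the only mildly delicate point is to verify that the leading coefficient $a-c\mu_0$ (respectively $c\beta$) — or, in the exceptional subcases, the resulting constant term — is genuinely nonzero, which follows from $\det A\ne 0$. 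Granting this, the scalar-multiple invariance \eqref{eqn.scalar-multiples}, the notation conversion \eqref{eqn.notation -conversion}, and formula \eqref{eqn.Thm5.3.2} give
\[
  \PM\bigl(\MT_A(P),\MT_A(\chi)\bigr)
    = \PM\bigl(\MT_A(P),\la-\mf{A^{-1}}(\mu_0)\bigr)
    = \cJ\bigl(\MT_A(P),\mf{A^{-1}}(\mu_0)\bigr)
    = \cJ(P,\mu_0)
    = \PM(P,\chi),
\]
which is \eqref{eqn.improved-Mobius-relation} in this case.

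For the main case $e:=\deg\chi\ge 2$, the idea is to pass to the algebraic closure $\ov{\bF}$ and invoke the degree-one case there. One first notes that the invariant polynomials of $P$ are unchanged under $\bF\hookrightarrow\ov{\bF}$: a Smith form over $\bF$ is still a valid Smith form over $\ov{\bF}$, since unimodularity and the monic divisibility chain are preserved, so uniqueness of the Smith form gives the claim. Over $\ov{\bF}$ factor $\chi=k\prod_{j=1}^{e}(\la-r_j)$ with all $r_j\in\ov{\bF}\setminus\bF$ by $\bF$-irreducibility, a root $r$ of multiplicity $m_r$ occurring $m_r$ times in the list. Writing $s_i(\la)=\chi(\la)^{\alpha_i}\wh{s}_i(\la)$ with $\gcd(\chi,\wh{s}_i)=1$, hence coprime also over $\ov{\bF}$ by \lemref{lem.no-common-roots}, each $r_j$ is a root of $s_i$ of multiplicity exactly $m_{r_j}\alpha_i$, so as sequences $\cJ(P,r_j)=m_{r_j}\cdot\PM(P,\chi)$. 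Next, \lemref{lem.degree-preservation} gives that $\MT_A(\chi)$ is $\bF$-irreducible with $\deg\MT_A(\chi)=e$, and by \lemref{lem.product-property} we have $\MT_A(\chi)=k\prod_j\MT_A(\la-r_j)$ with $\MT_A(\la-r_j)=(a-cr_j)\la+(b-dr_j)$; as in the proof of \lemref{lem.degree-preservation}, since $r_j\notin\bF$ each such factor has degree exactly $1$, and its unique root is $\mf{A^{-1}}(r_j)$. Because the M\"obius function $\mf{A^{-1}}$ is injective, distinct $r_j$ give distinct roots, and $\mf{A^{-1}}(r_1)$ is a root of $\MT_A(\chi)$ of multiplicity exactly $m_{r_1}$. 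Applying the identity $\cJ(Q,\rho)=m_\rho\cdot\PM(Q,\MT_A(\chi))$ just established (now with $Q=\MT_A(P)$ and $\rho=\mf{A^{-1}}(r_1)$) yields $\cJ\bigl(\MT_A(P),\mf{A^{-1}}(r_1)\bigr)=m_{r_1}\cdot\PM\bigl(\MT_A(P),\MT_A(\chi)\bigr)$, while \eqref{eqn.Thm5.3.2}, applied over $\ov{\bF}$ (legitimate since $A\in GL(2,\bF)\subseteq GL(2,\ov{\bF})$ and the transformed polynomial is entry-wise the same), gives $\cJ\bigl(\MT_A(P),\mf{A^{-1}}(r_1)\bigr)=\cJ(P,r_1)=m_{r_1}\cdot\PM(P,\chi)$. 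Cancelling the common factor $m_{r_1}$ produces \eqref{eqn.improved-Mobius-relation}.

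The step I expect to require the most care is the passage to $\ov{\bF}$ in the case $e\ge 2$: one must be sure that the invariant polynomials — and hence the partial multiplicity sequences for the individual roots $r_j$ — are computed consistently over $\bF$ and over $\ov{\bF}$, and that a possibly \emph{inseparable} $\chi$ (whose roots in $\ov{\bF}$ are then repeated) causes no trouble. It does not, precisely because the repetition multiplicities $m_{r_j}$ enter as a common scalar factor on both sides of the final identity and cancel. Everything else is routine bookkeeping with Definition~\ref{Mobius}, the scalar-multiple conventions, and the already-established Lemmas~\ref{lem.degree-preservation}, \ref{lem.product-property}, and \ref{lem.no-common-roots}.
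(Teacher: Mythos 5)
Your proposal is correct and follows essentially the same route as the paper: the same case split on $\deg\chi$, reduction of the degree $\le 1$ cases to Theorem~\ref{thm.Thm-from-mobius-paper} via the scalar-multiple convention, and passage to the algebraic closure $\ov{\bF}$ for $\deg\chi\ge 2$ using Lemmas~\ref{lem.product-property}, \ref{lem.degree-preservation}, and \ref{lem.no-common-roots}. The only (harmless) organizational difference is in the endgame of the degree $\ge 2$ case, where you compare partial multiplicity sequences at a single root $r_1$ and cancel the common multiplicity factor $m_{r_1}$, whereas the paper reassembles the full invariant polynomials of $\MT_A(P)$ as $\bigl[\MT_A(\chi)\bigr]^{\alpha_j}$ times a coprime cofactor.
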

\begin{proof}
  We consider three cases,
  depending on the nature of the $\bF$-irreducible $\chi(\la)$ involved.
  In all cases we have
  $A = \bigl[ \begin{smallmatrix} a & b \\ c & d \end{smallmatrix} \bigr] \in GL(2, \bF)$,
  and $A^{-1} = \frac{1}{\,\det A \,} \bigl[ \begin{smallmatrix} d & -b \\ -c & a \end{smallmatrix} \bigr]$.
  In the following calculations, keep in mind that we are free
  (because of \eqref{eqn.scalar-multiples})
  to alter $\bF$-irreducibles by (nonzero) scalar multiples, whenever convenient.
  We will also make repeated use of the formula \eqref{eqn.Thm5.3.2}
  from Theorem~\ref{thm.Thm-from-mobius-paper},
  as well as the notation conversion formula \eqref{eqn.notation -conversion},
  without ever explicitly mentioning that they are being used.
\begin{enumerate}
  \item[(a)] [\,$\chi = 0 \la + \beta$, with $\beta \ne 0$ and $\grade(\chi) = 1$\,] \quad
             Given the above conventions, we have
    \begin{align*}
      \PM(P, \chi) \,=\, \cJ (P, \infty)
                     &\,=\, \cJ \bigl( \MT_A(P), \mf{A^{-1}}(\infty) \bigr) \\
                     & \,=\, \cJ \bigl(\MT_A(P), \tfrac{d}{\,-c\,} \bigr)
                       \,=\, \PM \bigl(\MT_A(P), \,c \la + d \bigr)
                       \,=\, \PM \bigl(\MT_A(P), \,\MT_A(\chi) \bigr) \,,
    \end{align*}
             as desired.
  \item[(b)] [\,$\chi = \la - \mu_0$, with $\mu_0 \in \bF$ and $\grade(\chi) = \deg(\chi) = 1$\,] \\[2pt]
             Recall from \eqref{eqn.Mobius-of-linear}
             that $\MT_A(\la - \mu_0) = (a - c \mu_0) \la \,+\, (b - d \mu_0)$.
             Splitting this into two cases, depending on whether $a - c \mu_0$ is zero or not,
             we have
        \[
          \MT_A(\chi) \,=\,
          \begin{cases}
            (a - c \mu_0) \bigl[\, \la \,-\, \mf{A^{-1}} (\mu_0)  \,\bigr]
                     & \text{ if $\,a - c \mu_0 \ne 0$\,}, \\
            (b - d \mu_0) & \text{ if $\,a - c \mu_0 = 0$\,}.
          \end{cases}
        \]
        So if $a - c \mu_0 \ne 0$, then
    \begin{align*}
      \PM (P, \chi) \,=\, \cJ (P, \mu_0)
                     &\,=\, \cJ \bigl( \MT_A(P), \mf{A^{-1}}(\mu_0) \bigr) \\
                     & \,=\, \PM \bigl(\MT_A(P), \la - \mf{A^{-1}}(\mu_0) \bigr) 
                       \,=\, \PM \bigl(\MT_A(P), \,\MT_A(\chi) \bigr) \,.
    \end{align*}
             On the other hand, if $a - c \mu_0 = 0$ (i.e., $\mu_0 = \frac{a}{\,c\,} = \mf{A}(\infty)$),
             then $b - d \mu_0 \ne 0$ because of the nonsingularity of $A$,
             and we have
    \begin{align*}
      \PM (P, \chi) \,=\, \cJ (P, \mu_0)
                     &\,=\, \cJ \bigl( \MT_A(P), \mf{A^{-1}}(\mu_0) \bigr) \\
                     & \,=\, \cJ \bigl(\MT_A(P), \infty \bigr)
                       \,=\, \PM \bigl(\MT_A(P), \,0\la + (b - d\mu_0) \bigr)
                       \,=\, \PM \bigl(\MT_A(P), \,\MT_A(\chi) \bigr) \,.
    \end{align*}
  \item[(c)] [\,$\grade(\chi) = \deg(\chi) \ge 2$\,] \\[2pt]
             In this argument we will go back and forth between $\bF$
             and its algebraic closure $\ov{\bF}$,
             exploiting the fact that $P$, $\MT_A$, and $\chi$ can be viewed
             either as objects over $\bF$ or as objects over $\ov{\bF}$.

             Viewing $\chi(\la)$ as a polynomial in $\ov{\bF}[\la]$,
             we may factor it completely into linear factors
             \begin{equation} \label{eqn.factored-chi}
               \chi(\la) \,=\, \beta (\la - r_1)^{\delta_1}(\la - r_2)^{\delta_2}
                                    \dotsb (\la - r_k)^{\delta_k} \,, \quad 0 \ne \beta \in \bF \,,
             \end{equation}
             where the roots $r_i$ are \emph{distinct}, and all in $\ov{\bF} \setminus \bF$.
             (See proof of Lemma~\ref{lem.degree-preservation}.)
             Denoting the $j^{\text{th}}$ partial multiplicity of $\chi$ in $P$
             by $\alpha_j := \bigl[@@\PM (P, \chi)@ \bigr]_j$,
             so that the the $j^{\text{th}}$ invariant polynomial $s_j$ of $P$
             can be expressed as
             \begin{equation} \label{eqn.jth-invar-poly}
               s_j(\la) \,=\, \chi^{\alpha_j} \wt{s}_j(\la)
             \end{equation}
             with $\chi$ coprime to $\wt{s}_j$,
             we can now easily see that over $\ov{\bF}$ we have
             $\bigl[@@\PM (P, \la - r_i)@ \bigr]_j = \delta_i \alpha_j$,
             for each $1 \le i \le k$.
             This follows from \eqref{eqn.factored-chi} and \eqref{eqn.jth-invar-poly},
             together with the fact that \emph{no additional copies} of $(\la - r_i)$
             can arise from $\wt{s}_j$,
             by Lemma~\ref{lem.no-common-roots}.
             Applying the result of part (b) of this proof (over the field $\ov{\bF}$),
             we now have that
             \begin{equation} \label{eqn.jth-partial-multiplicities}
               \bigl[@@\PM (\MT_A(P), \MT_A \bigl(\la - r_i \bigr)@ \bigr]_j
                          = \bigl[@@\PM (P, \la - r_i)@ \bigr]_j = \delta_i \alpha_j \,,
             \end{equation}
            for all $1 \le i \le k$ and $1 \le j \le \rank P$.
            Note also from part(b) that
            $\MT_A \bigl(\la - r_i \bigr) = (a - c r_i) \bigl[\, \la \,-\, \mf{A^{-1}} (r_i)  \,\bigr]$
            is always degree one, since $a - cr_i \ne 0$
            ($a, c \in \bF$, but $r_i \in \ov{\bF} \setminus \bF$).
            (Although it is not strictly needed for this argument,
             it is also helpful to keep in mind that the transformation $\MT_A$ is \emph{bijective}
             on the set of 
             grade one polynomials over $\ov{\bF}$,
             so that $\MT_A(\la - r_i)$ is distinct from any other $\MT_A(\la - \mu_0)$.)

             Gathering together all of the partial multiplicities in \eqref{eqn.jth-partial-multiplicities},
             we now see that the $j^{\text{th}}$ invariant polynomial of $\MT_A(P)$
             can be written, over $\ov{\bF}$, as
             \begin{align*}
               m_j(\la) &\,=\, \bigl[@@ \MT_A(\la - r_1)@ \bigr]^{\delta_1 \alpha_j}
                              \,\bigl[@@ \MT_A(\la - r_2)@ \bigr]^{\delta_2 \alpha_j}
                          \dotsb \bigl[@@ \MT_A(\la - r_k)@ \bigr]^{\delta_k \alpha_j}
                          \cdot \wh{m}_j (\la) \\
                        &\,=\, \Bigl[@@ \bigl[@@\MT_A(\la - r_1)@ \bigr]^{\delta_1}
                                        \bigl[@@\MT_A(\la - r_2)@ \bigr]^{\delta_2}
                                 \dotsb \bigl[@@\MT_A(\la - r_k)@ \bigr]^{\delta_k} \Bigr]^{\alpha_j}
                          \cdot \wh{m}_j (\la) \,,
             \end{align*}
             where $\wh{m}_j (\la)$ is coprime in $\ov{\bF}[\la]$
             to each of the degree one factors $\MT_A(\la - r_i)$.
             But by the product property of M\"obius transformations
             in Lemma~\ref{lem.product-property}
             we also have
             \begin{multline*}
                  \bigl[@@ \MT_A(\la - r_1)@ \bigr]^{\delta_1}
                  \bigl[@@ \MT_A(\la - r_2)@ \bigr]^{\delta_2}
           \dotsb \bigl[@@ \MT_A(\la - r_k)@ \bigr]^{\delta_k} \\
                 \,=\,
                  \MT_A\bigl[(\la - r_1)^{\delta_1} \bigr]
                \,\MT_A\bigl[(\la - r_2)^{\delta_2} \bigr]
           \dotsb \MT_A\bigl[(\la - r_k)^{\delta_k} \bigr] \\
                 \,=\,
                  \MT_A\bigl[(\la - r_1)^{\delta_1} (\la - r_2)^{\delta_2}
                      \dotsb (\la - r_k)^{\delta_k} \bigr]
                 \,=\, \MT_A \bigl[ \chi(\la) \bigr] \; / \;\beta \,.
             \end{multline*}
             So the invariant polynomial $m_j(\la)$ for $\MT_A(P)$,
             which is the same over $\bF$ or over $\ov{\bF}$
             (since Smith forms are invariant under field extensions),
             can be expressed as
             \[
               m_j(\la) \,=\, \Bigl[@ \MT_A \bigl[ \chi(\la) \bigr] @\Bigr]^{\alpha_j} \wt{m}_j(\la) \,,
             \]
             where $\MT_A \bigl[ \chi(\la) \bigr]$ and $\wt{m}_j (\la)$
             are coprime in $\ov{\bF}[\la]$.
             Since $\MT_A \bigl[ \chi(\la) \bigr]$, and hence also $\wt{m}_j (\la)$,
             is a polynomial over $\bF$,
             by Lemma~\ref{lem.no-common-roots} they are also coprime in $\bF[\la]$.
             Also we know from Lemma~\ref{lem.degree-preservation}
             and the discussion surrounding it that $\MT_A \bigl[ \chi(\la) \bigr]$
             is an $\bF$-irreducible polynomial of the same degree as $\chi(\la)$.
             Thus it is well-defined to speak of the partial multiplicities
             of $\MT_A(P)$ at $\MT_A(\chi)$,
             and we see that over $\bF$ we have
             \[
               \bigl[@@\PM \bigl( \MT_A(P), \MT_A ( \chi ) \bigr) @ \bigr]_j
               \,=\, \alpha_j
               \,=\, \bigl[@@\PM (P, \chi)@ \bigr]_j
             \]
             for each $1 \le j \le \rank P$, i.e., for each individual partial multiplicity.
             Hence we also have the equality
             \[
               \PM \bigl( \MT_A(P), \MT_A (\chi) \bigr)
               \,=\, \PM (P, \chi)
             \]
             for the whole partial multiplicity sequence,
             as desired.
\end{enumerate}
\end{proof}


\subsection{Quasi-triangular realization with infinite spectral structure}
   \label{subsect.quasi-tri-with-infinite-structure}

With the new tools for working with M\"obius transformations
developed in Section~\ref{subsect.Mobius-and-irreducible divisors},
we are now able to show how to construct a quasi-triangular realization
for spectral data that may now include nontrivial structure at $\infty$.
An example to illustrate this construction
follows immediately after the theorem.

\begin{theorem}[Quasi-Triangular Realization with Eigenvalue at $\infty$]
   \label{thm.QTR-regular} \quad \\
    Let $\bF$ be an arbitrary field,
 	and suppose a list of $m$ invariant polynomials $s_1(\la),\dots s_m(\la)$ over $\,\bF$
 	forming a divisibility chain $s_1(\la) \vert s_2(\la) \vert \dotsb \vert s_m(\la)$ is given,
 	together with a nonempty list of $\ell$ nonzero partial multiplicities at infinity
 	$\,\alpha_1 \le \alpha_2 \le \dotsb \le \alpha_\ell$.
 	Let
 	\[
 	  \sigma \,:=\; \sum_{i=1}^m \deg \bigl( s_i(\lambda) \bigr)
 	                    \,+\, \sum_{j=1}^{\ell} \alpha_j
    \]
 	be the index sum for this data,
 	and define $k$ to be the maximum degree
 	among all of the $\,\bF$-irreducible divisors of $s_m(\la)$.
 	Suppose also that the field $\,\bF$ contains some scalar $\,\omega \in \bF$
 	such that $s_m(\omega) \ne 0$. 
 	Then for any choice of nonzero $g, n \in \bN$
 	such that $n \ge \max\{m, \ell\}$ and $gn = \sigma$,
 	there exists an $n\times n$, grade $g$ matrix polynomial $Q(\lambda)$ over $\,\bF$
 	that is $k$-quasi-triangular,
 	has exactly the given invariant polynomials
 	$s_1(\lambda),\dots, s_m(\lambda)$
 	with all other $n-m$  invariant polynomials equal to $1$,
 	and partial multiplicity sequence at infinity
 	$(0, \dotsc, 0, \alpha_1, \dots, \alpha_\ell)$.
 	In addition, 
 	$\deg Q(\la) = g$ if and only if $n > \ell$.
 \end{theorem}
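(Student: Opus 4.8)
The plan is to reduce to the strictly regular case of Theorem~\ref{thm.QTR-strictly-regular} by a single M\"obius transformation, in the manner outlined at the start of Section~\ref{sect.eigval-at-infinity}: use $\MT_A$ to push the prescribed structure at $\infty$ to a finite, otherwise-unoccupied location, solve the resulting strictly regular realization problem, and transport the answer back. The delicate point is to make the \emph{single} non-eigenvalue hypothesis --- that $s_m(\omega)\neq 0$ --- suffice to avoid all spectral collisions. To this end I would take $A=\bigl[\begin{smallmatrix}\omega & 1-\omega^2\\ 1 & -\omega\end{smallmatrix}\bigr]\in GL(2,\bF)$, for which $\det A=-1$ and $A^2=I$; then $\MT_A$ is an \emph{involution} on grade $g$ matrix polynomials, while $\mf{A}$ is an involution of $\bF\cup\{\infty\}$ that interchanges $\omega$ and $\infty$, so $\mf{A}(\infty)=\mf{A^{-1}}(\infty)=\omega$. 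This is precisely what lets one non-eigenvalue do the work of two: both the scalar that $\infty$ is moved to and the scalar whose $\mf{A}$-image is $\infty$ coincide with $\omega$, which lies outside the finite spectrum of the prescribed data.

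Next I would write down the $\MT_A$-transformed data and check it is a legitimate strictly regular realization problem of the same size and index sum. Put $(\gamma_1,\dots,\gamma_n):=(0,\dots,0,\alpha_1,\dots,\alpha_\ell)$, set $\wh{s}_i:=s_{i-(n-m)}$ for $i>n-m$ and $\wh{s}_i:=1$ otherwise, and define
\[
  \wt{q}_i(\la)\;:=\;\MT_A(\wh{s}_i)(\la)\cdot(\la-\omega)^{\gamma_i}\,,
\]
each $\MT_A(\wh{s}_i)$ computed with grade equal to its degree and rescaled to be monic. Using Lemma~\ref{lem.product-property} (multiplicativity of $\MT_A$), Lemma~\ref{lem.degree-preservation} together with \eqref{eqn.Mobius-of-linear} (degree preservation of $\MT_A$ on every $\bF$-irreducible divisor $\chi$ of $s_m$, valid since no such $\chi$ has the root $\omega=\mf{A}(\infty)$), and Lemma~\ref{lem.no-common-roots} (coprimality of $\la-\omega$ with each $\MT_A(\wh{s}_i)$, as $\omega$ is a root of none of these), one checks that $\wt{q}_1\mid\wt{q}_2\mid\dots\mid\wt{q}_n$ is a divisibility chain, that exactly $\max\{m,\ell\}\le n$ of the $\wt{q}_i$ are nonconstant, that the largest $\bF$-irreducible factor degree occurring among the $\wt{q}_i$ is $k$ (the new factor $\la-\omega$ has degree $1\le k$), and that $\sum_{i=1}^n\deg\wt{q}_i=\sigma=gn$. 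Feeding the nonconstant $\wt{q}_i$'s, with the choices $n$ and target degree $d:=g$ (forced by $dn=\sigma=gn$), into Theorem~\ref{thm.QTR-strictly-regular} produces an $n\times n$, degree $g$, strictly regular, $k$-quasi-triangular matrix polynomial $\wt{Q}(\la)$ over $\bF$ whose nontrivial invariant polynomials are exactly those $\wt{q}_i$.

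Finally I would set $Q(\la):=\MT_A(\wt{Q})(\la)$ with grade $g$, and check the four conclusions. Since $\wt{Q}$ has degree exactly $g$, every entry of $\wt{Q}$ has degree $\le g$, so the $(i,j)$-block of $Q$ is $\MT_A$ applied to the $(i,j)$-block of $\wt{Q}$ (a grade $g$ matrix polynomial), hence a genuine matrix polynomial; thus $Q$ is block upper triangular of grade $g$ with the \emph{same} block partition as $\wt{Q}$, so $Q$ is $n\times n$, regular and $k$-quasi-triangular. For the spectral data, applying $\MT_A$ to $Q=\MT_A(\wt{Q})$ gives $\MT_A(Q)=\wt{Q}$ (since $\MT_A\circ\MT_A=\mathrm{id}$), so Theorem~\ref{thm.improved-Thm5.3} yields $\PM(Q,\eta)=\PM\bigl(\wt{Q},\MT_A(\eta)\bigr)$ for every $\bF$-irreducible $\eta$ and, with $\eta=0\la+\beta$, $\PM(Q,0\la+\beta)=\PM\bigl(\wt{Q},\MT_A(0\la+\beta)\bigr)$. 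By construction $\MT_A(\chi)$ with $\chi\mid s_m$ is the irreducible divisor of $\wt{Q}$ carrying the exponents of $\chi$ in $s_1,\dots,s_m$ (with $n-m$ leading zeros), $\MT_A(0\la+\beta)$ is a nonzero scalar multiple of $\la-\omega$ carrying the exponents $(\gamma_1,\dots,\gamma_n)$, and every other $\bF$-irreducible is carried by $\MT_A$ to a non-divisor of $\wt{Q}$. Hence $Q$ has invariant polynomials $1,\dots,1$ ($n-m$ copies), $s_1,\dots,s_m$, no other finite eigenvalues, and partial multiplicity sequence $(0,\dots,0,\alpha_1,\dots,\alpha_\ell)$ at $\infty$. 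The degree dichotomy then follows from Lemma~\ref{lem.grade-equal-degree}: $\deg Q=g$ iff the first infinite partial multiplicity of $Q$ vanishes, i.e.\ iff $(\gamma_1,\dots,\gamma_n)$ has a leading zero, i.e.\ iff $n>\ell$ (and when $n=\ell$ one gets $\deg Q=g-\alpha_1<g$).

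I expect the principal difficulty to be the bookkeeping of the second step rather than any single deep point: one must confirm that $\MT_A$ preserves the degree of \emph{every} $\bF$-irreducible divisor in play (via Lemma~\ref{lem.degree-preservation} for degrees $\ge 2$, and via \eqref{eqn.Mobius-of-linear} with $\mu_0\neq\omega$ for the degree-one ones), that $\la-\omega$ is coprime to everything else and is the unique new divisor introduced, and that the counts make the hypotheses $n\ge\max\{m,\ell\}$ and $gn=\sigma$ of Theorem~\ref{thm.QTR-strictly-regular} hold \emph{exactly}; once that is in place, choosing the involutive $A$ and transporting the realization back through $\MT_A$ is routine, courtesy of Theorem~\ref{thm.improved-Thm5.3}.
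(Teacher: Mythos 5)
Your proposal is correct and follows essentially the same route as the paper's proof: use a M\"obius transformation determined by $\omega$ to swap the roles of $\infty$ and the non-eigenvalue $\omega$, realize the resulting purely finite data via Theorem~\ref{thm.QTR-strictly-regular}, and transport the realization back using Theorem~\ref{thm.improved-Thm5.3} and Lemma~\ref{lem.grade-equal-degree}. The only (harmless) differences are cosmetic: you use a single involutive matrix $A$ covering both cases $\omega=0$ and $\omega\neq 0$ where the paper splits into two choices of $A$ and returns via $\MT_{A^{-1}}$, and you record the transformed data as explicit invariant polynomials rather than as irreducible divisors with their partial multiplicity sequences.
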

 \begin{proof}
   Begin by taking the given spectral data and converting it
   into the ``first form'' described in Definition~\ref{FiniteStructure},
   i.e., into a list of $\bF$-irreducible divisors $\chi_j$ (for $j = 1, t$),
   each equipped with a partial multiplicity sequence $\PM_{\text{given}}(\chi_j)$ of length $n$.
   Recall that the partial multiplicities at $\infty$
   are recorded as $\PM_{\text{given}}(\beta)$ for some (any) nonzero $\beta \in \bF$.
   Of course, doing this conversion
   may require adjoining some additional initial trivial invariant polynomials,
   and also perhaps some additional initial zero partial multiplicities at $\infty$,
   as needed to fill out the length $n$.

   Next we design some M\"obius transformations $\MT_A$ that will interchange
   the spectral roles of $\infty$ and the ``special'' scalar $\omega \in \bF$
   that has been assumed to exist.
   (The ``special'' property of $\omega$ is that $\PM (\la - \omega) = (0, 0, \dotsc, 0)$.)
   We claim that the $\MT_A$ defined by the nonsingular matrix
   \[
     A \,=\,
       \begin{cases}
         \bigl[ \begin{smallmatrix} \omega & 0 \\ 1 & -\omega \end{smallmatrix} \bigr]
             & \text{\, if \;$\omega \ne 0$} \,, \\[3pt]
         \;\bigl[ \begin{smallmatrix} 0 & \;1 \\ 1 & \;0 \end{smallmatrix} \bigr]
             & \text{\, if \;$\omega = 0$} \,,
       \end{cases}
   \]
   achieves this goal.
   As a mapping on grade one scalar polynomials, this $\MT_A$ is a bijection,
   and it is straightforward to check that
   \[
     \MT_A(\la - \omega) \,=\, \begin{cases}
                                 \omega^2 & \text{\, if \;$\omega \ne 0$} \\
                                    1  &  \text{\, if \;$\omega = 0$}
                               \end{cases}
        \qquad\text{ and }\qquad
     \MT_A(\beta) \,=\, \begin{cases}
                          \beta (\la - \omega) & \text{\, if \;$\omega \ne 0$} \\
                              \beta \la = \beta (\la - \omega) &  \text{\, if \;$\omega = 0$}
                        \end{cases} \quad,
   \]
   for any nonzero $\beta \in \bF$.
   Thus we see that for any $\omega$, the transformation $\MT_A$ effects an interchange of the roles
   of $\omega$ and $\infty$ as eigenvalues,
   while all other finite scalars $\mu \in \bF$
   (represented by degree one polynomials $\la - \mu$)
   are simply permuted in some fashion,
   the details of which are irrelevant here. 

   Using this $\MT_A$,
   we now define a new collection of spectral data that has no eigenvalue at $\infty$.
   For each irreducible divisor $\chi_j$ in the original data,
   we replace it by $\MT_A(\chi_j)$,
   but assign to it the same partial multiplicity sequence
   that $\chi_j$ has in the given spectral data.
   (Here each $\MT_A(\chi_j)$ is taken with grade equal to degree.)
   In other words, we declare that
   \[
      \PM \bigl( \MT_A (\chi_j) \bigr) \,:=\; \PM_{\text{given}} (\chi_j) \,.
   \]
   By Lemma~\ref{lem.degree-preservation} we know that each $\MT_A (\chi_j)$
   is $\bF$-irreducible with $\deg \MT_A (\chi_j) = \deg \chi_j$.

   Similarly, for the eigenvalue at $\infty$ in the original given data,
   we replace it by $\MT_A (0 \la + 1) = \la - \omega$,
   and assign to it the partial multiplicity sequence that $\infty$
   has in the given spectral data.
   (Here $0 \la + 1$ is viewed as a grade one polynomial.)
   In other words, we declare that
   \[
      \PM (\la - \omega) \;=\; \PM \bigl( \MT_A (0 \la + 1) \bigr)
                         \,:=\; \PM_{\text{given}} (0 \la + 1) \,.
   \]
   Thus we have specified a new collection $\cC$ of purely finite spectral data,
   that is, a list of $\bF$-irreducible divisors
   $\bigl\{ \MT_A(\chi_1), \MT_A(\chi_2), \dotsc , \MT_A(\chi_t),
            \MT_A(0 \la + 1)= (\la-\omega) \bigr\}$,
   together with assigned partial multiplicity sequences for each.
   Since the irreducible divisors in $\cC$ have the same degrees
   as their partners from the original spectral data
   (the same grade in the case of partners $0 \la + 1$ and $\la - \omega$),
   as well as the same partial multiplicities,
   then the index sum $\sigma$ is the same for $\cC$
   as it was for the original data.
   Thus we may use the same values of $n$, $g$, and $k$ for $\cC$
   as was used for the original data.

   Now by Theorem~\ref{thm.QTR-strictly-regular} there exists an $n\times n$,
   degree $g$ matrix polynomial $P(\la)$ over $\bF$
   that is $k$-quasi-triangular,
   and has exactly the spectral data in $\cC$.
   In other words,
   \begin{align}
     \PM \bigl( P(\la), \MT_A (\chi_j) \bigr) &\,=\; \PM_{\text{given}} (\chi_j) \,,
          \label{eqn.PM1} \\
     \text{and} \qquad
     \PM \bigl( P(\la), \MT_A (0 \la + 1) \bigr) &\,=\; \PM_{\text{given}} (0 \la + 1) \,.
          \label{eqn.PM2}
   \end{align}
   Applying Theorem~\ref{thm.improved-Thm5.3}
   using the M\"obius transformation $\MT_{A^{-1}}$ (with respect to grade $g$)
   to \eqref{eqn.PM1} and \eqref{eqn.PM2},
   and defining $Q(\la) := \MT_{A^{-1}}(P)$, we have
   \begin{align*}
     \hspace*{12mm}
     \PM \bigl( Q(\la) , \chi_j \bigr) \,=\,
     \PM \bigl( \MT_{A^{-1}}(P) , \chi_j \bigr) &\,=\,
     \PM \Bigl( \MT_{A^{-1}} \bigl( P \bigr), \MT_{A^{-1}} \bigl(\MT_A (\chi_j) \bigr) \Bigr) \\
         &\,=\, \PM \bigl( P(\la), \MT_A (\chi_j) \bigr)
                     \,=\; \PM_{\text{given}} (\chi_j) \,, \hspace*{15mm}
   \end{align*}
   and
   \begin{multline*}
     \PM \bigl( Q(\la) , (0 \la + 1) \bigr) \,=\,
     \PM \bigl( \MT_{A^{-1}}(P) , (0 \la + 1) \bigr) \,=\,
     \PM \Bigl( \MT_{A^{-1}} \bigl( P \bigr), \MT_{A^{-1}} \bigl(\MT_A (0 \la + 1) \bigr) \Bigr) \\
         \,=\, \PM \bigl( P(\la), \MT_A (0 \la + 1) \bigr)
                     \,=\; \PM_{\text{given}} (0 \la + 1) \,, \hspace*{20mm}
   \end{multline*}
   Thus the matrix polynomial $Q(\la)$
   is the desired quasi-triangular realization of the given spectral data.
   The relation between the degree and the grade of $Q(\la)$ follows
   immediately from Lemma~\ref{lem.grade-equal-degree}.
 \end{proof}

\begin{ejem}\label{ex.full-reg}
Consider the following irreducible divisors
along with their given partial multiplicity sequences:
\begin{center}
\begin{tabular}{l|c}
  Irreducible divisor & $\PMg$ \\[1pt]
     \hline
  $\eta(\la) := \la^4 + \la + 1$ \mystrut{4.1mm} &  $\;(0,0,1,2,3,3)$ \\[1pt]
  $\phi(\la) = \la^2+ \la +1$ &  $\;(0,1,1,1,1,3)$ \\[1pt]
  $\infty$, i.e.,\,``$0\la + 1$''  &  $\;(0,1,1,2,2,4)$
\end{tabular}
\end{center}
with index sum $60$.
(Recall that the grade one matrix polynomial $0\la + 1$ is our stand-in for $\infty$,
 so the third partial multiplicity sequence records the desired infinite spectral structure.)
Thus we may legitimately choose to seek a quasi-triangular realization of this data
with grade $g=10$ and size $n=6$.

The first step in constructing such a realization
is to translate the data using
an appropriate M\"{o}bius transformation.
Since $0$ is \emph{not} an eigenvalue in the given spectral data,
we may take $\omega = 0$ as our ``special'' value in the field $\bF = \bZ_2$,
which will be the recipient of the spectral data at $\infty$.
The M\"obius transformation(s) $\MT_R$ given by the matrix
$R =\bigl[ \begin{smallmatrix} 0 & \;1 \\ 1 & \;0 \end{smallmatrix} \bigr]$,
i.e., reversal,
achieves this goal.
Note that this reversal is taken with respect to grade equalling degree
when applied to the irreducible divisors (with the exception of the grade \emph{one} $0\la + 1$),
and $\rev_g$ with $g=10$ when applied to the matrix polynomial as a whole.
The transformed spectral data is
\begin{center}
\begin{tabular}{l|c}
  Transformed irreducible divisor & $\PMg$ \\[1pt]
     \hline
  $\MT_R(\eta) = \la^4 + \la^3 + 1 = \chi(\la)$ \mystrut{4.1mm} &  $\;(0,0,1,2,3,3)$ \\[1pt]
  $\MT_R(\phi) \,=\, \phi(\la)$ &  $\;(0,1,1,1,1,3)$ \\[1pt]
  $\MT_R(0\la + 1) \,=\, \la = \psi(\la)$  &  $\;(0,1,1,2,2,4)$
\end{tabular}
\end{center}
Observe that this is the same data as was used in Examples \ref{ex.intro}-\ref{ex.partition},
so an appropriate realization for this transformed data
is the 2-quasi-triangular (grade $10$, strictly regular, and degree $10$) realization
from Example \ref{ex.partition}:
\[
  Q(\la) = \left[\begin{array}{cc|cc|cc}
                \chi\phi\psi (\phi + 1) & \chi\phi(\chi + \psi +1) & 0 & 0 & \psi(\phi + 1) & \phi + 1 \\
                \chi\phi\psi^4 & \chi\phi\psi^2 & 0 & 0 & \psi & 1 \\
            \hline \\[-12pt]
                     &  & \mystrut{3.7mm} \chi\phi\psi^2 & \chi\phi\psi^2(\phi + 1) & \phi\psi(\phi + 1) & 0 \\
                     &  & \chi\phi\psi^4 & \chi\phi\psi^2 & \phi\psi & 0 \\
            \hline \\[-12pt]
                     &  &  &  & \mystrut{3.7mm} \chi\phi\psi (\psi^2 + 1) & \chi\phi^2\psi^2 \\
                     &  &  &  &  \chi\phi^2\psi^2 & \chi\phi^2\psi
\end{array}\right].
\]
To complete the construction of the realization for the original spectral data,
we need to apply the inverse for the M\"{o}bius transformation $\MT_R$
used at the beginning, i.e., $\MT_{R^{-1}} = \MT_R$.
The result is the desired realization
\[
  \wt{Q}(\la) \,=\, \MT_R (Q) \,=\, \textstyle{\rev_{10}} (Q) \,=\,
     \left[\begin{array}{cc|cc|cc}
            \eta\phi (\phi + 1) & \eta\phi(\la^3 + \la + 1) & 0 & 0 & \la^8 + \la^7 & \la^9 + \la^8 \\
                   \eta\phi & \eta  \phi \la^2 & 0 & 0 & \la^9 & \la^{10} \\
            \hline \\[-12pt]
                0 & 0 & \mystrut{3.7mm} \eta\phi\la^2 & \eta\phi(\la + 1) & \phi\la^5(\la + 1) & 0 \\
                0 & 0 & \eta\phi  & \eta\phi\la^2  & \phi\la^7 & 0 \\
            \hline \\[-12pt]
                0 & 0 & 0 & 0 & \mystrut{3.7mm} \eta\phi (\la^3 + \la) & \eta\phi^2 \\
                0 & 0 & 0 & 0 &  \eta\phi^2 & \eta\phi^2\la
\end{array}\right].
\]
where $\eta := \MT_R(\chi) = \rev_4(\chi)$.

Observe that $\grade (\wt{Q}) = 10$ by construction,
and since $\alpha_1$ (the first partial multiplicity at $\infty$) is zero,
by Lemma~\ref{lem.grade-equal-degree} we know that $\deg \wt{Q}$
must be the same as $\grade \wt{Q}$.
Indeed $\deg \wt{Q}$ is $10$, but only barely.
There is \emph{only one} entry, the $(2,6)$ entry,
that has degree $10$.
Consequently the leading coefficient of $\wt{Q}(\la)$ has rank one,
or equivalently rank deficiency $5$,
which is consistent with the eigenvalue at $\infty$
having geometric multiplicity $5$,
as specified in the given spectral data.

Observe also that both the $(1,1)$ and $(3,3)$ blocks have degree $9$.
So it would be possible to use Lemma~\ref{LemOffDiagClean}
and either of these diagonal blocks to reduce the degree
of the off-diagonal $(1,3)$-block to be strictly less than $9$
via unimodular equivalence.
This would preserve the finite spectral structure,
but would have the unwanted side effect of spoiling
the infinite spectral structure.
Since degree and grade would no longer be equal,
by Lemma~\ref{lem.grade-equal-degree} we would have $\alpha_1 > 0$,
contrary to the given spectral data at $\infty$.
\end{ejem}


\subsection{Quasi-triangularization of regular matrix polynomials}
   \label{subsect.quasi-triangularization}

In the following signature result of this paper,
the goal is to quasi-triangularize a given polynomial matrix,
rather than to construct a quasi-triangular realization of some given data.
That is, we will start with an arbitrary regular polynomial matrix $P(\la)$,
and show that there must always be a quasi-triangular matrix $Q(\la)$
with the same degree, grade, and complete spectral data as $P(\la)$.

\begin{theorem}[Quasi-Triangularization] \label{thm.QT-regular} \quad \\
 	Suppose $P(\la)$ is a regular $n\times n$ matrix polynomial
 	of grade $g$ and degree $d$,
 	over an arbitrary field $\,\bF$.
 	Define $k$ to be the maximum degree among all
 	of the $\bF$-irreducible divisors of $P(\la)$.
 	If $P(\la)$ is \emph{not} strictly regular,
 	further suppose that there is some constant $\omega \in \bF$ such that $s_n(\omega) \ne 0$,
 	where $s_n(\la)$ is the $n^{\text{th}}$ invariant polynomial of $P(\la)$.
 	\parens{I.e., there is some $\omega \in\bF$ that is \emph{not} in the spectrum of $P$.}
 	Then there exists a regular $k$-quasi-triangular matrix polynomial $Q(\la)$ over $\,\bF$
 	that has exactly the same size, grade, degree, and complete spectral data as $P(\la)$.
 	When $P(\la)$ is strictly regular,
    then the $k$-quasi-triangularization $Q(\la)$ is strictly regular,
    and may be chosen to have the additional property
    that all off-diagonal blocks have degree strictly less than $d$.
\end{theorem}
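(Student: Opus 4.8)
The plan is to reduce the statement directly to the two realization theorems already established: \thmref{thm.QTR-strictly-regular} in the strictly regular case, and \thmref{thm.QTR-regular} in general. Since all the machinery was set up in Sections~\ref{sect.realization-of-finite-spectral-data} and \ref{sect.eigval-at-infinity}, essentially all that remains is to (i) read off from $P(\la)$ the data those theorems take as input, (ii) verify that this data satisfies their hypotheses, and (iii) check that the resulting realization inherits not merely the grade but also the \emph{degree} of $P(\la)$.

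For (i), let $s_1(\la)\,\vert\,\dotsb\,\vert\,s_m(\la)$ be the nontrivial (hence monic and nonconstant) invariant polynomials of $P(\la)$, and let $\alpha_1 \le \dotsb \le \alpha_\ell$ be the nonzero partial multiplicities of $P$ at $\infty$, so $\ell = 0$ exactly when $P$ is strictly regular. Note that $s_m(\la)$ is the same polynomial as the $n^{\text{th}}$ invariant polynomial $s_n(\la)$ of $P$, since the trivial invariant polynomials occupy the initial slots of the divisibility chain; in particular the number $k$ of the theorem, being the largest degree of an $\bF$-irreducible divisor of $P(\la)$, equals the largest degree of an $\bF$-irreducible divisor of $s_m(\la)$ --- exactly the $k$ appearing in \thmref{thm.QTR-strictly-regular} and \thmref{thm.QTR-regular}. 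For (ii): the Smith form of $P$ has $n$ diagonal entries, so $n \ge m$, and every partial multiplicity sequence has length $\rank P = n$, so $n \ge \ell$; the Index Sum Theorem~\ref{thm.index-sum} gives $\sigma := \sum_{i=1}^m \deg s_i + \sum_{j=1}^\ell \alpha_j = gn$ (which in the strictly regular case is simply $dn = \sigma$ by \eqref{ISMeq-StrictlyReg}, since then $g=d$); and when $P$ is not strictly regular the standing hypothesis $s_n(\omega)\ne 0$ is precisely the scalar hypothesis demanded by \thmref{thm.QTR-regular}, because $s_n = s_m$. Now for (iii): if $P$ is strictly regular, apply \thmref{thm.QTR-strictly-regular} with this data, $d = \deg P$, and $n$ the size of $P$, obtaining an $n\times n$, degree $d$, strictly regular, $k$-quasi-triangular $Q(\la)$ with nontrivial invariant polynomials exactly $s_1,\dotsc,s_m$ and with every off-diagonal block entry of degree $< d$; this settles the strictly regular case, extra conclusion included. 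If $P$ is not strictly regular, apply \thmref{thm.QTR-regular} with grade $g$ and size $n$, obtaining an $n\times n$, grade $g$, $k$-quasi-triangular $Q(\la)$ with invariant polynomials exactly $s_1,\dotsc,s_m$ (the other $n-m$ trivial) and partial multiplicity sequence at $\infty$ equal to $(0,\dotsc,0,\alpha_1,\dotsc,\alpha_\ell)$; thus $Q$ has exactly the finite and infinite spectral structure of $P$, as well as its size and grade.

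Finally I would confirm $\deg Q = d$. By construction the full length-$n$ partial multiplicity sequence of $Q$ at $\infty$ coincides with that of $P$, and $P$ and $Q$ share the grade $g$, so \lemref{lem.grade-equal-degree} forces $g - \deg Q$ to equal the first entry of that common sequence, which is $g - \deg P$; hence $\deg Q = d$ (equivalently, $\deg Q = g$ iff $n > \ell$ iff $\deg P = g$). Therefore $Q(\la)$ is a regular $k$-quasi-triangular matrix polynomial with the same size, grade, degree, and complete spectral data as $P(\la)$. I anticipate no real obstacle here, the difficulty all having been absorbed into the realization theorems; the only point needing slight care is this last degree check, since ``same degree'' is not literally among the conclusions of \thmref{thm.QTR-strictly-regular} or \thmref{thm.QTR-regular} but must be deduced from \lemref{lem.grade-equal-degree}.
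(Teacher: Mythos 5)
Your proposal is correct, and its overall architecture --- extract the data from $P(\la)$, check the hypotheses of the realization theorems, invoke \thmref{thm.QTR-strictly-regular} or \thmref{thm.QTR-regular}, then recover the degree via \lemref{lem.grade-equal-degree} --- is the same as the paper's. The one genuine difference is in the non-strictly-regular case. The paper does not apply \thmref{thm.QTR-regular} at grade $g$ directly: it first replaces $P(\la)$ by an entrywise-identical $\wt{P}(\la)$ of grade $d$, whose infinite partial multiplicities are those of $P$ shifted down by $\alpha_1 = g-d$ (the shift relation \eqref{eqn.PM-shift}), realizes \emph{that} data at grade $d$, and then re-grades the output back to $g$, shifting the infinite structure back up. You instead feed the grade-$g$ data straight into \thmref{thm.QTR-regular}, which its statement permits since $gn=\sigma$ holds by the Index Sum Theorem and $n \ge \max\{m,\ell\}$ is automatic; the case $g>d$ simply corresponds to $\ell = n$, where the theorem correctly reports $\deg Q < g$, and your closing application of \lemref{lem.grade-equal-degree} pins down $\deg Q = d$ exactly. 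Your route is slightly shorter and avoids the double re-grading; the paper's route makes the grade-versus-degree bookkeeping more explicit by isolating it in the shift of \eqref{eqn.PM-shift} rather than in the final degree check. Both are sound, and your observation that ``same degree'' must be \emph{deduced} rather than read off from the realization theorems is precisely the point the paper's detour is designed to handle.
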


\begin{proof}
  From $P(\la)$, extract the complete spectral data, size, degree, and grade.
  If $P(\la)$ is strictly regular, then use this data
  together with Theorem~\ref{thm.QTR-strictly-regular} to construct the desired $Q(\la)$
  with $\grade Q = \deg Q = \deg P = \grade P$.

  If $P(\la)$ is not strictly regular,
  then define a new matrix polynomial $\wt{P}(\la)$
  that is entrywise identical to $P(\la)$,
  but with $\grade \wt{P}$ chosen to be equal to $d = \deg P$.
  (If $g=d$ to begin with, then $\wt{P}$ is identical to $P$ in every way.)
  Note that the degrees of $P$ and $\wt{P}$
  as well as their finite spectral structures
  are the same,
  even though their grades may be different.
  Now if $\PM (P,\, 0\la + 1) = (\alpha_1, \dotsc, \alpha_n)$
  is the partial multiplicity sequence of $P$ at $\infty$,
  then by Lemma~\ref{lem.grade-equal-degree}
  the partial multiplicity sequence of $\wt{P}$ at $\infty$
  is shifted by $\alpha_1 = g-d$ from that of $P$,
  i.e.,
  \[
    \PM (\wt{P},\, 0\la + 1) \,=\; \PM (P,\, 0\la + 1) - \alpha_1 \cdot (1, 1, \dotsc, 1)
                            \,=\, (0, \alpha_2 - \alpha_1, \dotsc, \alpha_n - \alpha_1) \,.
  \]
  Now we can use Theorem \ref{thm.QTR-regular} to construct
  a $k$-quasi-triangular realization $\wt{Q}(\la)$
  for the complete spectral data, size, degree, and grade of this $\wt{P}(\la)$,
  for which we have
  $\deg \wt{Q} = \grade \wt{Q} = \grade \wt{P} = \deg \wt{P} = \deg P = d$.
  Finally, we define a $k$-quasi-triangular matrix polynomial $Q(\la)$
  that is entrywise identical to $\wt{Q}(\la)$,
  and so has $\deg Q = \deg \wt{Q} = d$,
  but now has grade chosen to be $g$.
  The infinite spectral data of $Q$ is shifted by $\alpha_1 = g - d$
  from that of $\wt{Q}$, so we have
  \begin{align*}
    \PM (Q,\, 0\la + 1) & =\, \PM (\wt{Q},\, 0\la + 1) + \alpha_1 \cdot (1, 1, \dotsc, 1) \\
                        & =\, \PM (\wt{P},\, 0\la + 1) + \alpha_1 \cdot (1, 1, \dotsc, 1)
                         \,=\, (\alpha_1, \alpha_2, \dotsc, \alpha_n)
                         \,=\, \PM (P,\, 0\la + 1) \,.
  \end{align*}
  Thus we see that $Q(\la)$
  has exactly the same size, degree, grade, and complete spectral data as $P(\la)$,
  and so is the desired quasi-triangularization of $P(\la)$.
\end{proof}

\begin{remark} \label{rem.spectral-equiv}
\rm
  It is worth emphasizing that the relationship between $ P(\la) $
  and the quasi-triangularization $ Q(\la) $
  is stronger than just unimodular equivalence.
  Theorem~\ref{thm.QT-regular} guarantees the existence
  of a \emph{spectrally equivalent} $k$-quasi-triangularization
  for any regular matrix polynomial over an arbitrary field.
\end{remark}

\section{More on diagonal block sizes} \label{sect.diagonal-block-size-bounds}

In this final section we explore 
the range of possibilities for diagonal block sizes
in degree-preserving quasi-triangularizations.
We have shown that every regular matrix polynomial $P(\la)$ over an arbitrary field
admits a spectrally equivalent degree-preserving $k$-quasi-triangularization,
where $k$ is the highest degree
among the irreducible divisors of $P(\la)$.
But is that really the best possible general result?
Could it be that there is a smaller bound on diagonal block sizes
of quasi-triangularizations that holds for all regular matrix polynomials?
Section~\ref{subsect.sharp-bound} addresses this issue,
exhibiting a family of examples that shows
that the $k$ in Theorem~\ref{thm.QT-regular}
is indeed the best possible general bound for diagonal block sizes.

By contrast, in Section~\ref{subsect.triangularizing}
we probe the opposite end of the size range of diagonal block sizes,
trying to determine when it is possible to achieve diagonal blocks
that are all $1 \times 1$,
i.e., when it is possible to just plain \emph{triangularize}
in a spectrally equivalent and degree-preserving way.
Although we have not even come close to completely settling this question,
we are at least able to identify some scenarios
where a necessary and sufficient condition for triangularizability can be found,
and some other more general scenarios where a condition
sufficient to guarantee the existence
of a triangularization can be given.

\subsection{Sharpness of the upper bound $k$} \label{subsect.sharp-bound}

The matrix polynomials described in Example~\ref{ex.sharp-bound}
show that Theorem~\ref{thm.QT-regular} provides
the best possible general bound on diagonal block sizes.
This is done via an infinite family of examples where it can be proved
that every possible quasi-triangularization
has all of its diagonal blocks of size $k \times k$ or larger,
where $k$ is the largest degree among all of the irreducible divisors.


\begin{ejem} \label{ex.sharp-bound}
Consider any strictly regular $\nbyn$ matrix polynomial $P(\la)$ with degree $d \ge 2$,
such that $P(\la)$ has exactly one irreducible divisor $\chi(\la)$,
and $k = \deg(\chi) \ge 2$ \emph{is coprime to $d$}.
Since $mk = dn$ for some $m \in \bN$ by the index sum theorem,
we must then also have $k|n$.

Suppose $\bF$ is any field that supports
such an $\bF$-irreducible polynomial $\chi(\la)$ with $k = \deg(\chi) \ge 2$;
e.g., $\bF = \bQ$ has such a $\chi(\la)$ for any $k \ge 2$ at all.
Then there are infinitely many choices of $d$ and $n$
that satisfy the conditions mentioned above,
i.e., that $k$ is coprime to $d$ and $k|n$.
 For any of these choices of $k, d, n, \chi(\la)$, and the field $\bF$,
the Fundamental Realization Theorem~\ref{Teorprescribed}
guarantees the existence of a matrix polynomial $P(\la)$ over $\bF$
as described above,
i.e., one that is strictly regular, $\nbyn$, degree $d$,
and with just one irreducible divisor $\chi$.
Thus there are infinitely many matrix polynomials
encompassed by the discussion in this Example.

Now suppose that $Q(\la)$ is \emph{any} degree-$d$ quasi-triangularization of $P(\la)$;
that is, $Q$ is block-upper-triangular, has degree $d$,
and is unimodularly equivalent to $P$.
Suppose $Q$ has $\ell$ diagonal blocks $ Q_{11}(\la),Q_{22}(\la),\dotsc,Q_{\ell\ell}(\la) $
with sizes $n_i \times n_i$, respectively,
so that $n = \sum_{i=1}^{\ell} n_i$.
Now each block $Q_{ii}(\la)$ has degree at most $d$,
so that $\deg \bigl( \det Q_{ii} \bigr) \le d n_i$.
Thus we have
\[
  dn \,=\, \deg \bigl(\det P \bigr) \,=\, \deg \bigl( \det Q \bigr)
     \,=\, \deg \Bigl( \prod_{i=1}^{\ell} \det Q_{ii} \Bigr)
     \,=\, \sum_{i=1}^{\ell} \deg \bigl( \det Q_{ii} \bigr)
     \,\le\, \sum_{i=1}^{\ell} d n_i \,=\, dn \,.
\]
But this 
means that each inequality $\deg \bigl( \det Q_{ii} \bigr) \le d n_i$
must actually be an equality,
so $\deg \bigl( \det Q_{ii} \bigr) = d n_i \ge 2$,
and hence $Q_{ii}(\la)$ is not a constant block.
Since $\det Q = \alpha \chi^m$ for some nonzero scalar $\alpha \in \bF$,
that means that $\chi$ must divide each $\det Q_{ii}(\la)$.
Thus $k$ divides each $d n_{ii}$,
and hence also divides each $n_{ii}$
since $k$ and $d$ are coprime.
But this means that $k \le n_{ii}$ for each $1 \le i \le \ell$,
and so $Q(\la)$ is at best $k$-quasi-triangular.

 Finally, note that the discussion in Remark~\ref{rem.HP-proof-analysis}
is relevant to this example.
In that remark, it was pointed out that there is only one scenario
in which the homogeneous partitioning procedure
forces there to be sublists with exactly $k$ elements,
which then later lead to $\kbyk$ diagonal blocks
in the quasi-triangularization.
It is not hard to show that this example
falls exactly under this scenario,
so our procedure will necessarily produce a quasi-triangularization
with \emph{all} of its diagonal blocks being $\kbyk$.
This, of course, is not equivalent to \emph{proving}
that no quasi-triangularization with any diagonal block of size
smaller than $\kbyk$ can exist, as we have done above.
But it certainly is completely consistent with that result.
\end{ejem}

When $d$ and $k$ are coprime,
then Example~\ref{ex.sharp-bound} has shown that the ``best''
quasi-triangularization that can be attained may sometimes
be forced to have all of its diagonal blocks with size $\kbyk$.
However, there are many matrix polynomials that have quasi-triangularizations
with much smaller diagonal blocks than the general upper bound of $\kbyk$.
Indeed we have seen this already in Example~\ref{ex.partition},
where we had $k=4$,
but were able to construct a $2$-quasi-triangularization.
The next example gives a whole family of matrix polynomials
that show that the gap between this general upper bound $k$
and the actual smallest realizable diagonal block size for quasi-triangularizations
can be arbitrarily large.
The discussion in Section~\ref{subsect.triangularizing}
provides further examples of this phenomenon.

\begin{ejem} \label{ex.small-diag-blocks}
  Consider an arbitrary target degree $d$ and irreducible polynomial
  $\chi(\la)$ with $k = \deg(\chi) = 2d$.
  Note that there are many fields $\bF$
  that support the presence of such
  high degree $\bF$-irreducible polynomials, e.g., $\bF = \bQ$.
  For any such $\bF$-irreducible $\chi(\la)$ of degree $k$,
  there is a unique way to express it
  in the form $\chi(\la) = \la^d p(\la) + q(\la)$,
  where $\deg p = d$ and $\deg q = d-1$.
  Then letting
  \[
    X(\la) \;=\; \mat{cr} p(\la) & -1 \\ q(\la) & \la^d \rix \,,
  \]
  we see that $\deg X = d$, $\det X = \chi(\la)$,
  and the Smith form of $X(\la)$ is just $\diag \bigl( 1, \chi(\la) \bigr)$.
  This implies that the Smith form of $X^6(\la)$
  must be of the form $\diag(\chi^{\ell}, \chi^m)$,
  where $\ell + m = 6$.

  Now let $P(\la)$ be any strictly regular
  $12 \times 12$ matrix polynomial over $\bF$ with degree $d$,
  that has the Smith form $\diag(I_{10},\chi^{\ell}, \chi^m)$.
  By the Fundamental Realization Theorem~\ref{Teorprescribed}
  such matrix polynomials must exist.
  Consider next the matrix polynomial
  \[
    Q(\la) \;=\; \mat{cccccc} X(\la) & I_2 & & & & \\
                                     & X(\la) & I_2    & & & \\
                                     &        & X(\la) & I_2 & &  \\
                                     &        &        & X(\la) & I_2 & \\
                                     &        &        &        & X(\la) & I_2 \\
                                     &        &        &        &        & X(\la)
                 \rix
    \,\in\; \bF[\la]^{12 \times 12} \,.
  \]
  This $Q(\la)$ has degree $d$, and it is not hard to show
  that it is unimodularly equivalent to $\diag \bigl( I_{10}, X^6(\la) \bigr)$,
  and hence also to $P(\la)$.
  Thus $Q(\la)$ is a degree-preserving 2-quasi-triangularization of $P(\la)$.
  This $Q(\la)$ has very much smaller diagonal blocks
  than what is guaranteed by the general 
  result in Theorem~\ref{thm.QT-regular},
  with a gap (of $k-2$) in the size of diagonal blocks
  between the general upper bound and those actually occurring in $Q(\la)$,
  a gap that can be arbitrarily large.
\end{ejem}

\subsection{When triangularizing is possible} \label{subsect.triangularizing}

A natural question to ask is ``When is it possible to triangularize?",
or in other words, when can we guarantee the existence
of a degree-preserving quasi-triangularization in which all diagonal blocks are 1$\times$1?
To answer this, let's begin with an example
where all of the irreducible divisors have degree 2 or less,
as in a real matrix polynomial.
\begin{ejem}\label{ex.2x2-triang}
  Suppose that we are trying to build a \emph{triangularization}
  of a strictly regular matrix polynomial with degree $d=7$ and size $n=8$,
  and there are a total of $m_1 = 18$ degree-1 irreducible factors
  and $m_2 = 19$ degree-2 irreducible factors in the Smith form,
  for a total degree sum of $56$.
  One strategy to build an appropriate target diagonal
  is to first spread the degree-2 factors out as much as possible,
  and then try to fill in the resulting gaps with the degree-1 factors.
\begin{center}
\begin{tikzpicture}
\draw (-0.5,0) -- (-0.5,3.5);
\draw (-0.7,0) -- (-0.5,0);
\node at (-1,0) {0};
\draw (-0.7,0.5) -- (-0.5,0.5);
\node at (-1,0.5) {1};
\draw (-0.7,1) -- (-0.5,1);
\node at (-1,1) {2};
\draw (-0.7,1.5) -- (-0.5,1.5);
\node at (-1,1.5) {3};
\draw (-0.7,2) -- (-0.5,2);
\node at (-1,2) {4};
\draw (-0.7,2.5) -- (-0.5,2.5);
\node at (-1,2.5) {5};
\draw (-0.7,3) -- (-0.5,3);
\node at (-1,3) {6};
\draw (-0.7,3.5) -- (-0.5,3.5);
\node at (-1,3.5) {7};

\fill[cyan] (0,0) rectangle (16,2);
\fill[cyan] (10,2) rectangle (16,3);

\draw (0,0) -- (16,0);
\draw (0,1) -- (16,1);
\draw (0,2) -- (16,2);
\draw (10,3) -- (16,3);
\draw[thick,dashed,red] (0,3.5) -- (16,3.5);

\draw (0,0) -- (0,2);
\draw[dashed] (0,2) -- (0,3.5);
\draw (2,0) -- (2,2);
\draw[dashed] (2,2) -- (2,3.5);
\draw (4,0) -- (4,2);
\draw[dashed] (4,2) -- (4,3.5);
\draw (6,0) -- (6,2);
\draw[dashed] (6,2) -- (6,3.5);
\draw (8,0) -- (8,2);
\draw[dashed] (8,2) -- (8,3.5);
\draw (10,0) -- (10,3);
\draw[dashed] (10,3) -- (10,3.5);
\draw (12,0) -- (12,3);
\draw[dashed] (12,3) -- (12,3.5);
\draw (14,0) -- (14,3);
\draw[dashed] (14,3) -- (14,3.5);
\draw (16,0) -- (16,3);
\draw[dashed] (16,3) -- (16,3.5);
\end{tikzpicture}
\end{center}
Let $\mathbf{f}_1$ and $\mathbf{f}_2$ denote
the degree-1 and degree-2 factor-counting vectors
of the Smith form, respectively.
If $\mathbf{v}_2$ is the homogenization of $\mathbf{f}_2$,
then we know that it is possible to spread out
the $19$ degree-2 factors along the diagonal
to realize this $\mathbf{v}_2$ via unimodular transformations,
using Lemma~\ref{lem.conversion-to-homogen-version}
and Corollary~\ref{cor.unimod-trans}.
This homogenization is visualized in the diagram above,
where each column displays the contents of a diagonal entry location,
each box stands for an irreducible factor,
and the height of each box displays the degree of that factor,
in this case a height/degree of $2$.
 To achieve a triangularization,
we need to have each diagonal entry have degree $7$,
so our goal is to populate each column in the diagram
with boxes up to exactly height $7$.
The amount of remaining space between the top of the current stack of cyan blocks
and the red dashed line in each column will be called a degree gap,
and the vector containing all of the degree gaps will be called the gap vector $\mathbf{g}$.
In this example the gap vector is $\mathbf{g} = (3, 3, 3, 3, 3, 1, 1, 1)$.
So what remains is to try to distribute the $18$ degree-1 irreducible factors
(height-1 boxes) so as to exactly fill these gaps.
In other words, we need to try to convert $\mathbf{f}_1$
into the gap vector $\mathbf{g}$.
Now our only means to move these degree-1 factors around
is to use the tools from Corollary~\ref{cor.unimod-trans},
which correspond to compressions and interchanges
of the entries of the factor-counting  vector.
But such actions can only convert $\mathbf{f}_1$ into a vector that it majorizes.
(Recall the classical Muirhead theorem discussed
 right after Remark~\ref{rem.spread-out}.)
Thus this strategy will succeed in producing
an appropriate diagonal for an achievable triangularization
whenever $\mathbf{f}_1 \succeq \mathbf{g}$.
Consequently we see that this majorization condition is a sufficient condition
to guarantee the triangularizability of a strictly regular matrix polynomial
whenever all irreducible divisors are of degree at most two.
\end{ejem}

In fact, though, the condition $\mathbf{f}_1 \succeq \mathbf{g}$
is also a necessary condition for triangularizability,
and so gives a characterization in this scenario,
as will be seen as an immediate consequence of the following development.
We begin with some background lemmas.
The first of these lemmas uses an alternative definition
for majorization of vectors, that is equivalent
to the one given earlier in Definition~\ref{def.majorization}.

\begin{defi}[Majorization~\cite{Marshall}] \label{def.majorization-alt} \quad \\
  For vectors $\mathbf{x}, \mathbf{y}$ in $\bR^n$ (or in $\bZ^n$),
  let $\mathbf{x}^{\prime\prime}$ and $\mathbf{y}^{\prime\prime}$
  denote the permutations of those vectors
  in which the entries have been arranged in \emph{increasing} order.
  We say that \emph{$\mathbf{x}$ majorizes $\mathbf{y}$},
  or \emph{$ \mathbf{y} $ is majorized by $ \mathbf{x} $},
  and write $\, \mathbf{x} \succeq \mathbf{y} \,$, if
\begin{align}\label{eqn.majorization-alt}
  \sum_{i=1}^{\ell} x_i^{\prime\prime} \,\le\, \sum_{i=1}^{\ell} y_i^{\prime\prime}
    \quad \text{ for } \quad \ell = 1,2,\dotsc, n \;,
\end{align}
with equality when $ \ell = n $.
\end{defi}

\begin{lema} \label{lem.triang-majorizations}
  Suppose $T(\la)$ is a regular triangular matrix polynomial
  over an arbitrary field $\,\bF$,
  and has the Smith form $S(\la)$.
  Let $\mathcal{F} \sqcup \mathcal{G}$ be \emph{any} coprime partition
  of the multiset 
  of all of the $\mathbb{F}$-irreducible factors
  of the invariant polynomials in $S(\la)$,
  equivalently of all of the $\mathbb{F}$-irreducible factors
  of the diagonal entries of $T(\la)$.
  Then the majorization relations
  \begin{equation} \label{eqn.triang-majorization-relations}
    \mathbf{d}_{\scF}(S) \,\succeq\, \mathbf{d}_{\scF}(T)
  \end{equation}
  hold for the diagonal factor-counting vectors of $S(\la)$ and $T(\la)$,
  with respect to \emph{every} $\cF$ from \emph{every} such coprime partition.
\end{lema}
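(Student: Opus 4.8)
The plan is to deduce the majorization relation \eqref{eqn.triang-majorization-relations} from the multiplicative characterization of the Smith form in \eqref{eqn.gcd-characterization}, using the elementary fact that a principal minor of a triangular matrix is the product of the corresponding diagonal entries. First I would record two preliminary facts. Since $S(\la) = U(\la) T(\la) V(\la)$ with $U, V$ unimodular, the determinants $\det S(\la)$ and $\det T(\la)$ differ by a nonzero constant; as $\det T = \prod_{i} t_{ii}(\la)$ and $\det S = \prod_{i} s_i(\la)$, the multiset of $\bF$-irreducible factors of the diagonal entries of $T$ coincides with that of the invariant polynomials of $S$, so a coprime partition $\cF \sqcup \cG$ of $\cM$ applies simultaneously to $T$ and to $S$; moreover $\sum_{i} |s_i(\la)|_{\scF} = |\det S(\la)|_{\scF} = |\det T(\la)|_{\scF} = \sum_{i} |t_{ii}(\la)|_{\scF}$, so the two diagonal factor-counting vectors have equal component sums. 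Second, since $s_1 \mid s_2 \mid \dotsb \mid s_n$, divisibility of polynomials gives $|s_1|_{\scF} \le |s_2|_{\scF} \le \dotsb \le |s_n|_{\scF}$, so $\mathbf{d}_{\scF}(S)$ is already arranged in increasing order. I would therefore verify the majorization in the increasing-order form of Definition~\ref{def.majorization-alt}.

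The core of the argument is as follows. Fix $\ell$ with $1 \le \ell \le n$ and let $J = \{ j_1 < j_2 < \dotsb < j_\ell \}$ be any $\ell$-element index set. Because $T$ is triangular, the principal submatrix $T[J,J]$ is again triangular, with diagonal entries $t_{j_1 j_1}(\la), \dotsc, t_{j_\ell j_\ell}(\la)$; hence the associated $\ell \times \ell$ minor of $T$ equals $\prod_{j \in J} t_{jj}(\la)$, which is nonzero by regularity. By \eqref{eqn.gcd-characterization}, $s_1(\la) \dotsb s_\ell(\la)$ divides every $\ell \times \ell$ minor of $T$, so in particular it divides $\prod_{j \in J} t_{jj}(\la)$. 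Counting $\cF$-factors (divisibility implies $|p|_{\scF} \le |q|_{\scF}$ whenever $p \mid q$) yields $\sum_{i=1}^{\ell} |s_i(\la)|_{\scF} \le \sum_{j \in J} |t_{jj}(\la)|_{\scF}$ for every such $J$. Choosing $J$ to consist of the indices of the $\ell$ smallest entries of $\mathbf{d}_{\scF}(T)$, and recalling that the $\ell$ smallest entries of $\mathbf{d}_{\scF}(S)$ are exactly $|s_1|_{\scF}, \dotsc, |s_\ell|_{\scF}$, this says precisely that the sum of the $\ell$ smallest entries of $\mathbf{d}_{\scF}(S)$ is at most the sum of the $\ell$ smallest entries of $\mathbf{d}_{\scF}(T)$. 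Combining this with the equality of full component sums established above, Definition~\ref{def.majorization-alt} gives $\mathbf{d}_{\scF}(S) \succeq \mathbf{d}_{\scF}(T)$, as claimed.

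I do not expect a serious obstacle here; the argument is short once the right characterization of majorization is used. The points that need care are the multiset bookkeeping — confirming that $|\cdot|_{\scF}$ is monotone under polynomial divisibility and that the two factor-counting vectors really have the same component sum — and the observation that the $J \times J$ minor of a triangular matrix is $\prod_{j \in J} t_{jj}$. The same proof handles both the upper- and lower-triangular cases, since in either case $T[J,J]$ inherits triangularity of the same type, so its determinant is the product of its diagonal entries.
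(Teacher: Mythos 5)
Your proposal is correct and follows essentially the same route as the paper's proof: both use the gcd characterization \eqref{eqn.gcd-characterization} to get $s_1(\la)\dotsb s_\ell(\la) \mid \prod_{j\in J} t_{jj}(\la)$ for every $\ell$-element index set $J$ (the latter product being an $\ell\times\ell$ principal minor of the triangular $T$), count $\cF$-factors, use $\det S = c\det T$ for the equality of total sums, and conclude via the increasing-order form of majorization in Definition~\ref{def.majorization-alt}. The only cosmetic difference is that you make the "principal minor of a triangular matrix is the product of its diagonal entries" step explicit, which the paper leaves implicit.
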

\begin{proof}
  For convenience, let us introduce some notation to ease the discussion.
  Let
  \[
    \bm{\sigma} \;=\; (\sigma_1, \sigma_2, \dotsc, \sigma_n) \,:=\; \mathbf{d}_{\scF}(S)
  \]
  be an abbreviation for the diagonal factor-counting vector $\mathbf{d}_{\scF}(S)$.
  Note that $\sigma_1 \le \sigma_2 \le \dotsb \le \sigma_n$
  because of the divisibility chain property of invariant polynomials,
  but the entries of $\mathbf{d}_{\scF}(T)$ may not be in any such order.
  So let $\bm{\tau} = (\tau_1, \tau_2, \dotsc, \tau_n)$
  be the permutation of $\mathbf{d}_{\scF}(T)$
  such that $\tau_1 \le \tau_2 \le \dotsb \le \tau_n$.
  Now from \eqref{eqn.gcd-characterization}
  we know that $s_{11}(\la) \vert \,t_{jj}(\la)$ for $j=1,\dotsc,n$,
  so $|s_{11}(\la)|_{\scF} \le |t_{jj}(\la)|_{\scF}$ for $j=1,\dotsc,n$,
  and thus $\sigma_1 \le \tau_1 = \min_j |t_{jj}(\la)|_{\scF}$.

  Using \eqref{eqn.gcd-characterization} again,
  we have that $\bigl[ s_{11}(\la) s_{22}(\la) \bigr] \vert \,\bigl[ t_{ii}(\la)t_{jj}(\la) \bigr]$
  for $i,j=1,\dotsc,n$ with $i \ne j$.
  Hence
  \[
    |s_{11}(\la) s_{22}(\la)|_{\scF} \;\le\; |t_{ii}(\la) t_{jj}(\la)|_{\scF}
  \]
  for $i,j=1,\dotsc,n$ with $i \ne j$,
  and consequently $\sigma_1 + \sigma_2 \le \tau_1 + \tau_2$.
  In a similar manner, we see from \eqref{eqn.gcd-characterization} that
  for each $1 \le \ell \le n-1$ we have
  \[
    |s_{11}(\la) s_{22}(\la) \dotsb s_{\ell \ell}(\la)|_{\scF}
      \;\le\;
    |t_{{i_1}{i_1}}(\la) t_{{i_2}{i_2}}(\la) \dotsb t_{{i_\ell}{i_\ell}}(\la) |_{\scF}
  \]
  for all $\ell$-tuples $(i_1, i_2, \dotsc, i_\ell)$ with distinct entries
  and $i_1, i_2, \dotsc, i_\ell = 1, \dotsc, n$.
  Thus we have
  \[
    \sigma_1 + \sigma_2 + \dotsb + \sigma_{\ell} \;\le\; \tau_1 + \tau_2 + \dotsb + \tau_{\ell}
  \]
  for each $1 \le \ell \le n-1$.
  Finally, since $\det S = c \cdot \det T$ for some nonzero scalar $c \in \bF$,
  we have
  \[
    |s_{11}(\la) s_{22}(\la) \dotsb s_{nn}(\la)|_{\scF}
      \;=\;
    |t_{11}(\la) t_{22}(\la) \dotsb t_{nn}(\la) |_{\scF} \,,
  \]
  and hence
  \[
    \sigma_1 + \sigma_2 + \dotsb + \sigma_n \;=\; \tau_1 + \tau_2 + \dotsb + \tau_n \,.
  \]
  By Definition~\ref{def.majorization-alt}, then,
  we have $\bm{\sigma} \succeq \bm{\tau}$,
  and hence that \eqref{eqn.triang-majorization-relations} holds.
\end{proof}

The next result shows that if the irreducible divisors of a matrix polynomial
have \emph{only two different degrees},
and one of those is degree $1$,
then having a triangularization of any kind
implies that there must also exist a triangularization
in which the highest degree irreducible factors are indeed
''spread out as much as possible'' as in Example~\ref{ex.2x2-triang},
i.e., where their factor-counting vector is $1$-homogeneous.

\begin{lema} \label{lem.triang-conversion}
  Suppose a strictly regular matrix polynomial $P(\la)$ of degree $d$
  has a triangularization $Q_0(\la)$ of degree $d$.
  Further suppose that the multiset $\cM$ of all of the $\,\bF$-irreducible factors
  in the Smith form for $P(\la)$ contains \emph{only two} distinct degrees,
  $1$ and $k$, for some $k \ge 2$.
  Let $\cM = \cF_1 \sqcup \cF_k$ be the coprime partition
  in which $\cF_j$ contains all of the irreducible factors in $\cM$ of degree $j$ for $j = 1,k$.
  Then $P(\la)$ has a triangularization $T(\la)$ of degree $d$
  in which the diagonal factor-counting vector $\mathbf{d}_{\scFk}(T)$
  is $1$-homogeneous.
\end{lema}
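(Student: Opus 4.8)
The plan is to start from the given triangularization $Q_0(\la)$ and reshape it, using only the triangularity-preserving unimodular transformations of Corollary~\ref{cor.unimod-trans}, into a triangularization $T(\la)$ whose diagonal $\cF_k$-factor-counting vector has been ``homogenized'', all the while keeping the degree of \emph{each} diagonal entry pinned at $d$; a single application of Lemma~\ref{LemOffDiagClean} at the end then restores the overall degree to $d$. Without loss of generality $Q_0(\la)$ is upper triangular (if it is lower triangular, replace it by its conjugate by the reversal permutation matrix, which is again unimodularly equivalent to $P(\la)$, still of degree $d$, and now upper triangular). First I would record a preliminary normalization. Since $P(\la)$ is strictly regular of degree $d$ and size $n$, we have $\deg\det P(\la)=dn$ (see just after Definition~\ref{def.strictly-regular}); as $Q_0(\la)\sim P(\la)$ its determinant is a nonzero scalar multiple of $\det P(\la)$, and since $Q_0(\la)$ is upper triangular, $\det Q_0(\la)$ is the product of its diagonal entries. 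Hence the diagonal entry degrees sum to $dn$, and since each is at most $\deg Q_0=d$, \emph{every} diagonal entry of $Q_0(\la)$ has degree exactly $d$. Moreover the multiset of $\bF$-irreducible factors of the product of the diagonal entries of $Q_0(\la)$ is exactly $\cM=\cF_1\sqcup\cF_k$, so writing $\mathbf{d}_{\scFk}(Q_0)=(m_1,\dots,m_n)$ and $\mathbf{d}_{\scFo}(Q_0)=(n_1,\dots,n_n)$ we get $km_i+n_i=d$ for every $i$.

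Next I would introduce two ``balanced moves''. A \emph{balanced interchange} at adjacent positions $i,i+1$ applies Corollary~\ref{cor.unimod-trans} twice --- once with the partition $\cM=\cF_k\sqcup\cF_1$ to swap the $\cF_k$-counts at those two positions, and once with $\cM=\cF_1\sqcup\cF_k$ to swap the $\cF_1$-counts --- so that the two diagonal entries are exchanged wholesale and the identity $km_i+n_i=d$ is preserved. A \emph{balanced compression} at positions $i,i+1$, say with $m_i>m_{i+1}$, first uses Corollary~\ref{cor.unimod-trans} with $\cF=\cF_k$ to carry out $(m_i,m_{i+1})\leadsto(m_i-j,\,m_{i+1}+j)$ for some $1\le j\le m_i-m_{i+1}-1$, and then uses Corollary~\ref{cor.unimod-trans} with $\cF=\cF_1$ to carry out $(n_i,n_{i+1})\leadsto(n_i+kj,\,n_{i+1}-kj)$. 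The key check: the relations $km_i+n_i=km_{i+1}+n_{i+1}=d$ force $n_{i+1}-n_i=k(m_i-m_{i+1})\ge kj$, so the target pair $(n_i+kj,\,n_{i+1}-kj)$ lies between $\min\{n_i,n_{i+1}\}$ and $\max\{n_i,n_{i+1}\}$, which is exactly the range allowed by Corollary~\ref{cor.unimod-trans} (the degree-$1$ counts are being moved \emph{toward} their average, not away from it); and the new diagonal degrees at $i,i+1$ are $k(m_i-j)+(n_i+kj)=d$ and $k(m_{i+1}+j)+(n_{i+1}-kj)=d$. Both moves keep the matrix upper triangular and unimodularly equivalent to $Q_0(\la)$, and both preserve the invariant ``$km_i+n_i=d$ at every current position''.

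Now I would invoke the Homogenization Lemma~\ref{lem.conversion-to-homogen-version}: the natural vector $\mathbf{d}_{\scFk}(Q_0)$ can be transformed into one of its homogenizations (a $1$-homogeneous vector with entries $q$ and $q+1$) by a finite sequence of interchanges and compressions of adjacent components. Replacing each interchange in that sequence by a balanced interchange and each compression by a balanced compression yields a finite chain of unimodular equivalences terminating in an upper triangular $T(\la)\sim Q_0(\la)\sim P(\la)$ with $\mathbf{d}_{\scFk}(T)$ equal to that homogenization (so $1$-homogeneous) and with every diagonal entry of $T(\la)$ still of degree exactly $d$. The off-diagonal entries of $T(\la)$ may have degree exceeding $d$; but since each $1\times1$ diagonal block $[\,t_{ii}(\la)\,]$ is strictly regular of degree $d$, Lemma~\ref{LemOffDiagClean} applies and produces $\wt{T}(\la)\sim T(\la)$ with the \emph{same} diagonal entries and with off-diagonal entries satisfying $\deg \wt{t}_{ij}<\min\{\deg t_{ii},\deg t_{jj}\}=d$. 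Thus $\wt{T}(\la)$ is an upper triangular matrix polynomial of degree exactly $d$, unimodularly equivalent to $P(\la)$, with $\mathbf{d}_{\scFk}(\wt{T})=\mathbf{d}_{\scFk}(T)$ still $1$-homogeneous --- the required triangularization.

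The main obstacle, and the step deserving the most care, is the degree bookkeeping inside a balanced compression: one must verify that the compensating $\cF_1$-move is a \emph{legal} Corollary~\ref{cor.unimod-trans} operation, i.e.\ that it transports the degree-$1$ counts toward their average rather than away from it. This is precisely where the identities $km_i+n_i=d$ (valid at every stage because every move preserves them) are used, and it is also where the hypothesis that $\cM$ has \emph{only} the two degrees $1$ and $k$ is essential: it is exactly this that makes ``remove one $\cF_k$-factor from a diagonal entry and add $k$ many $\cF_1$-factors'' a degree-neutral operation, so that the homogenization of the $\cF_k$-counts can be executed without ever losing the degree-$d$ property of the diagonal.
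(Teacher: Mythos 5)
Your proof is correct and follows essentially the same strategy as the paper's: both hinge on the observation that every diagonal entry of a degree-$d$ triangularization has degree exactly $d$ (so $k m_i + n_i = d$ throughout), which makes each compression of the $\cF_k$-counts compensable by a legal opposite compression of the $\cF_1$-counts, allowing the $\cF_k$-homogenization to be carried out while the diagonal degrees stay pinned at $d$, with Lemma~\ref{LemOffDiagClean} restoring control of the off-diagonal degrees. The only difference is organizational: the paper iterates one min/max compression at a time and reapplies Lemma~\ref{LemOffDiagClean} after each round, whereas you batch the whole sequence via the Homogenization Lemma~\ref{lem.conversion-to-homogen-version} and clean up once at the end.
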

\begin{proof}
  If $\mathbf{d}_{{\cF}_k}(Q_0)$ is already $1$-homogeneous,
  take $T(\la) = Q_0(\la)$ and then of course we are done.
  So suppose that $\bm{\kappa} = (\kappa_1, \kappa_2, \dotsc, \kappa_n) := \mathbf{d}_{{\cF}_k}(Q_0)$
  is not $1$-homogeneous.
  The argument will consist of a procedure showing how to convert $Q_0(\la)$
  by a finite sequence of triangularizations for $P(\la)$
  into a degree-$d$ triangularization $T(\la)$
  that has the desired $1$-homogeneity property.

  Let $\kappa_i$ and $\kappa_j$ be the minimum and maximum entries in $\bm{\kappa}$,
  so $\kappa_j - \kappa_i \ge 2$.
  Now by a finite sequence of interchanges we can arrange
  that the corresponding diagonal entries of $Q_0(\la)$ are adjacent,
  say in the $(i,i)$ and $(i+1,i+1)$ locations,
  and these interchanges can be implemented by unimodular transformations
  using Corollary~\ref{cor.unimod-trans}.
  This gives us a new degree $d$ triangularization $\wt{Q}_0(\la)$ for $P(\la)$,
  with a new $\bm{\wt{\kappa}} = \mathbf{d}_{{\cF}_k}(\wt{Q}_0)$
  with $\wt{\kappa}_i = \kappa_i$ and $\wt{\kappa}_{i+1} = \kappa_j$.
  Now we do a compression of the degree $k$ factors in these two adjacent entries,
  again via a unimodular transformation from Corollary~\ref{cor.unimod-trans},
  decreasing the maximum $\wt{\kappa}_{i+1}$ by one
  and increasing the minimum $\wt{\kappa}_i$ by one.
  This gives us a triangular matrix polynomial $\wh{Q}_0(\la)$
  that is no longer degree $d$, although it is still unimodularly equivalent to $P(\la)$;
  all diagonal entries have degree $d$, except for the $i^{th}$ and $(i+1)^{th}$,
  which now have degrees $d+k$ and $d-k$, respectively.
  We can now restore degree $d$ on these adjacent diagonal entries,
  by doing a compression of the degree $1$ factors,
  again using a unimodular transformation from Corollary~\ref{cor.unimod-trans}.
  The $i^{th}$ diagonal entry of $\wh{Q}_0(\la)$ has $d - k \kappa_i$ degree-$1$ factors,
  while the $(i+1)^{th}$ diagonal entry has $d - k \kappa_j$ degree-$1$ factors.
  Since $(d - k \kappa_i) > (d - k \kappa_j)$,
  with a degree difference of at least $2k$,
  we can do a compression of the degree-$1$ factors
  where the $i^{th}$ diagonal entry loses $k$ degree-$1$ factors,
  and the $(i+1)^{th}$ diagonal entry gains $k$ degree-$1$ factors.
  This gives a triangular matrix polynomial $\breve{Q}_0(\la)$
  that is unimodularly equivalent to $P(\la)$,
  and has all diagonal entries with degree $d$ again,
  but the off-diagonal entries may now have degree larger than $d$.
  This is remedied by using Lemma~\ref{LemOffDiagClean},
  with all diagonal blocks taken to be of size $1 \times 1$.
  This finally gives us a degree-$d$ triangularization $Q_1(\la)$ of $P(\la)$,
  with $\mathbf{d}_{{\cF}_k}(Q_1)$ one step closer to being $1$-homogeneous
  than $\mathbf{d}_{{\cF}_k}(Q_0)$ was.

  Of course if $\mathbf{d}_{{\cF}_k}(Q_1)$ is now $1$-homogeneous,
  then we take $T(\la) = Q_1(\la)$ and we are done.
  If not, we repeat the above procedure on $Q_1(\la)$
  to produce a new degree-$d$ triangularization $Q_2(\la)$
  with diagonal factor-counting vector $\mathbf{d}_{{\cF}_k}(Q_2)$
  that is even closer to being $1$-homogeneous.
  Continuing this, we generate a sequence $Q_0, Q_1, Q_2, \dotsc$
  of degree-$d$ triangularizations for $P(\la)$,
  which in finitely many steps must eventually produce
  a degree-$d$ triangularization $T(\la)$
  for which $\mathbf{d}_{\scFk}(T)$ is $1$-homogeneous.
\end{proof}

\begin{prop} \label{prop.triang-characterization}
  Let $P(\la)$ be a strictly regular $\nbyn$ polynomial matrix
  of degree $d$ over a field $\,\bF$.
  Let $S(\la)$ be the Smith form of $P(\la)$,
  and assume that all irreducible divisors of $P(\la)$
  are degree $1$ or degree $k$, where $k \ge 2$.
  Let $\cF_1 \sqcup \cF_k$ be the same coprime partition
  of the multiset of all $\,\bF$-irreducible factors in $S(\la)$
  as in Lemma~\ref{lem.triang-conversion}.
  Organize the degree-$k$ factors into a vector of $n$ polynomials
  $\mathbf{q} (\la) := \bigl( q_1(\la), q_2(\la), \dotsc, q_n(\la) \bigr)$
  in the same way as in the proof of Lemma~\ref{Stacking}
  \parens{and as in the diagram for Example~\ref{ex.2x2-triang}\,},
  i.e., so that $|\mathbf{q}(\la)|_{\scFk}$ is $1$-homogeneous.
  Define the degree gaps $g_i := d - \deg(q_i)$,
  and the corresponding gap vector $\mathbf{g} := (g_1,g_2,\dotsc,g_n)$.
  \parens{Note that some of the $g_i$ may be negative.}
  Then $P(\la)$ has a triangularization of degree $d$
  if and only if the majorization condition
  $\mathbf{d}_{\scFo}(S) \,\succeq\, \mathbf{g}$ holds.
\end{prop}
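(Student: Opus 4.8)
The plan is to establish the two implications separately. The forward direction (triangularizability $\Rightarrow$ majorization) is short, reusing Lemmas~\ref{lem.triang-conversion} and~\ref{lem.triang-majorizations}; the reverse direction carries the technical weight and is essentially a rigorous version of the heuristic in Example~\ref{ex.2x2-triang}.

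\smallskip
\noindent\emph{Necessity.} Suppose $P(\la)$ has a degree-$d$ triangularization $Q_0(\la)$. Since every irreducible divisor of $P$ has degree $1$ or $k$, Lemma~\ref{lem.triang-conversion} replaces $Q_0$ by a degree-$d$ triangularization $T(\la)$ whose diagonal $\cF_k$-factor-counting vector $\mathbf{d}_{\scFk}(T)$ is $1$-homogeneous. As $T$ is unimodularly equivalent to $P$ it has Smith form $S(\la)$ and the same multiset of irreducible factors; in particular $\deg\det T = \deg\det P = dn$, and since $T$ is triangular of degree $d$ this forces $\deg t_{ii}(\la) = d$ for every diagonal entry, so $|t_{ii}|_{\scFo} + k\,|t_{ii}|_{\scFk} = d$, i.e.\ $\mathbf{d}_{\scFo}(T) = d\cdot\mathbf{1} - k\cdot\mathbf{d}_{\scFk}(T)$. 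Now $\mathbf{d}_{\scFk}(T)$ and $|\mathbf{q}(\la)|_{\scFk}$ are both $1$-homogeneous natural vectors of length $n$ with the same component sum (the total number of degree-$k$ factors), hence are permutations of one another; therefore $\mathbf{d}_{\scFo}(T)$ is a permutation of $d\cdot\mathbf{1} - k\,|\mathbf{q}(\la)|_{\scFk} = \mathbf{g}$. Finally, Lemma~\ref{lem.triang-majorizations} applied to $T$ with the coprime partition $\cF_1\sqcup\cF_k$ (taking $\cF=\cF_1$) gives $\mathbf{d}_{\scFo}(S)\succeq\mathbf{d}_{\scFo}(T)$, and since majorization is invariant under permuting either side this yields $\mathbf{d}_{\scFo}(S)\succeq\mathbf{g}$.

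\smallskip
\noindent\emph{Sufficiency.} Assume $\mathbf{d}_{\scFo}(S)\succeq\mathbf{g}$; note this forces each $g_i\ge 0$, since the smallest entry of the natural vector $\mathbf{d}_{\scFo}(S)$ is $\ge 0$ and, by the $\ell=1$ inequality in Definition~\ref{def.majorization-alt}, is $\le\min_i g_i$. The target diagonal is the $n$-vector $\mathbf{t}(\la)$ whose $i$-th entry is $q_i(\la)$ times a product of $g_i$ of the degree-$1$ irreducible factors of $\cM$, allotted so that all degree-$1$ factors of $\cM$ get used; by construction $\mathbf{t}(\la)$ has exactly the factor multiset $\cM$, satisfies $|\mathbf{t}|_{\scFk}=|\mathbf{q}|_{\scFk}$ and $|\mathbf{t}|_{\scFo}=\mathbf{g}$, and has every entry of degree $d$. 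I plan to transform $\diag S(\la)$ into an upper triangular matrix with this diagonal in two triangularity-preserving stages, both realized through Corollary~\ref{cor.unimod-trans}: first, using the Homogenization Lemma~\ref{lem.conversion-to-homogen-version} on the $\cF_k$-factors, rearrange the diagonal so its $\cF_k$-factor-counting vector becomes the $1$-homogeneous $|\mathbf{q}|_{\scFk}$ (the $\cF_1$-factor-counting vector then being some permutation of $\mathbf{d}_{\scFo}(S)$); second, keeping the $\cF_k$-blocks fixed, convert this permutation of $\mathbf{d}_{\scFo}(S)$ into the block-constant $\mathbf{g}$ using $\cF_1$-compressions and interchanges confined to a single $\cF_k$-block. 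Once the diagonal equals $\mathbf{t}(\la)$, every diagonal entry is a $1\times 1$ strictly regular block of degree $d$, so Lemma~\ref{LemOffDiagClean} drives all off-diagonal entries below degree $d$, producing the desired degree-$d$ triangularization of $P(\la)$.

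\smallskip
\noindent\emph{Main obstacle and loose ends.} The crux is the second stage: proving that the block-constant vector $\mathbf{g}$, being majorized by $\mathbf{d}_{\scFo}(S)$, is genuinely reachable under the restricted moves — adjacent $\cF_1$-compressions (which may cross a $\cF_k$-block boundary without disturbing any $\cF_k$-count) together with interchanges that stay inside one $\cF_k$-block. The ``only if'' half of reachability (these moves can only yield vectors majorized by the starting one) is the Muirhead/Robin--Hood observation already cited in Example~\ref{ex.2x2-triang}; the ``if'' half needs an explicit construction: since $|\mathbf{q}|_{\scFk}$ takes at most two consecutive values there are at most two $\cF_k$-blocks, so one first shuttles degree-$1$ factors across the single boundary by alternating boundary-compressions with in-block interchanges until each block holds the correct total number of degree-$1$ factors, and then homogenizes within each block via Lemma~\ref{lem.conversion-to-homogen-version}, which lands on a block-constant vector precisely because that total is a multiple of the block size. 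I would also dispatch the degenerate cases: if $\cF_k=\varnothing$ then $\mathbf{g}=d\cdot\mathbf{1}$ is the homogenization of $\mathbf{d}_{\scFo}(S)$, the hypothesis holds automatically, and the first stage is vacuous; if $\cF_1=\varnothing$ then $\mathbf{d}_{\scFo}(S)=\mathbf{0}$, the condition collapses to $\mathbf{g}=\mathbf{0}$, i.e.\ $k\mid d$, and both directions follow by a direct degree count (on the necessity side, reasoning as in Lemma~\ref{lem.triang-conversion} even though its hypothesis of ``two distinct degrees'' is not literally met).
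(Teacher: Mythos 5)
Your proposal is correct and follows the same route as the paper's proof: necessity via Lemma~\ref{lem.triang-conversion} followed by Lemma~\ref{lem.triang-majorizations} (your explicit argument that $\mathbf{d}_{\scFo}(T)$ is a permutation of $\mathbf{g}$, and that majorization forces $g_i \ge 0$, fills in details the paper leaves as ``easy to see''), and sufficiency by first homogenizing the $\cF_k$-factors along the diagonal and then redistributing the degree-$1$ factors to realize $\mathbf{g}$, finishing with Lemma~\ref{LemOffDiagClean} on $1\times 1$ blocks --- a step the paper leaves implicit but which you rightly include. The one place you diverge is the ``main obstacle'': it is illusory. Corollary~\ref{cor.unimod-trans} applied with $\cF = \cF_1$, $\cG = \cF_k$ redistributes \emph{only} the degree-$1$ factors between adjacent diagonal entries and leaves the degree-$k$ content of each position untouched; in particular the choice $(\alpha,\beta)=(n,m)$ realizes an interchange of $\cF_1$-counts across a $\cF_k$-block boundary without disturbing $\mathbf{d}_{\scFk}$. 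Hence there is no need to confine interchanges to a single block, and the unrestricted Muirhead argument (arbitrary Robin Hood transfers and permutations simulated by adjacent interchanges and compressions) takes $\mathbf{d}_{\scFo}(S)$ directly to $\mathbf{g}$ in the required positional order --- which is exactly what the paper does. Your block-shuttling workaround is therefore unnecessary, and as sketched it is also the only under-justified step in your writeup (it is not clear that boundary compressions can always move factors in the needed direction), so you should simply delete it in favor of the direct argument.
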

\begin{proof}
  $(\Rightarrow)$ \,\, Suppose $P(\la)$ has a triangularization of degree $d$.
  Then by Lemma~\ref{lem.triang-conversion},
  $P(\la)$ has a triangularization $T(\la)$ of degree $d$
  in which the diagonal factor-counting vector $\mathbf{d}_{\scFk}(T)$
  is $1$-homogeneous.
  Applying Lemma~\ref{lem.triang-majorizations} to the coprime partition $\cF_1 \sqcup \cF_k$,
  we then have that $\mathbf{d}_{\scFo}(S) \,\succeq\, \mathbf{d}_{\scFo}(T)$.
  But in this triangularization $T(\la)$,
  it is easy to see that
  the diagonal factor-counting vector $\mathbf{d}_{\scFo}(T)$
  is exactly the same as the gap vector $\mathbf{g}$.
  Thus $\mathbf{d}_{\scFo}(S) \,\succeq\, \mathbf{g}$, as desired.

  \medskip
  \noindent
  $(\Leftarrow)$ \,\, Now conversely, suppose that the majorization condition
  $\mathbf{d}_{\scFo}(S) \,\succeq\, \mathbf{g}$ holds.
  Starting from $S(\la)$,
  we know from Corollary~\ref{cor.unimod-trans}
  and Lemma~\ref{lem.conversion-to-homogen-version}
  that we can spread out the irreducible factors in $\cF_k$ along the diagonal
  via unimodular transformations so as to form
  a triangular matrix polynomial $\wt{T}(\la)$
  such that $\mathbf{d}_{\scFk}(\wt{T}) = |\mathbf{q}(\la)|_{\scFk}$ is $1$-homogeneous,
  and $\mathbf{d}_{\scFo}(\wt{T}) = \mathbf{d}_{\scFo}(S)$.
  Since $\mathbf{d}_{\scFo}(S) \,\succeq\, \mathbf{g}$,
  there exists a finite sequence of interchanges and compressions of adjacent diagonal entries
  that will turn $\mathbf{d}_{\scFo}(S)$ into $\mathbf{g}$.
  Implementing this sequence via unimodular transformations from Corollary~\ref{cor.unimod-trans}
  will produce a triangular matrix polynomial $T(\la)$
  such that $\mathbf{d}_{\scFk}(T) = \mathbf{d}_{\scFk}(\wt{T}) = |\mathbf{q}(\la)|_{\scFk}$
  is $1$-homogeneous,
  $\mathbf{d}_{\scFo}(T) = \mathbf{g}$,
  $\deg T = d$,
  and $T(\la)$ is unimodularly equivalent to $S(\la)$,
  and hence also to $P(\la)$.
  In other words, $T(\la)$ is the desired degree-$d$ triangularization of $P(\la)$.
\end{proof}

\begin{remark}
\rm
  Necessary and sufficient conditions for the triangularizability
  of strictly regular \emph{real} matrix polynomials
  were given in \cite[Theorem 4.9]{TasTisZab}.
  Note that Proposition~\ref{prop.triang-characterization} recovers this result
  from \cite{TasTisZab}
  as the special case when $k=2$.
\end{remark}

The result of Proposition~\ref{prop.triang-characterization}
can be extended to a slightly more general scenario,
still involving irreducible divisors with only two degrees,
but no longer tied to requiring one of those degrees to be $1$.
This more general scenario is essentially
just a ``scaled'' version of the one discussed in the Proposition.

\begin{coro}\label{cor.triang-characterization}
  Let $P(\la)$ be a strictly regular $\nbyn$ polynomial matrix
  of degree $d$ over a field $\,\bF$.
  Let $S(\la)$ be the Smith form of $P(\la)$,
  and assume that all irreducible divisors of $P(\la)$
  are degree $\ell$ or degree $k$, where $k > \ell \ge 1$.
  Also assume that $\ell @\vert@ k$ and $\ell @\vert@ d$.
  Let $\cF_{\ell} \sqcup \cF_k$ be the coprime partition
  of the multiset of all $\,\bF$-irreducible factors in $S(\la)$
  where $\cF_{\ell}$ contains all of the degree-$\ell$ factors,
  and $\cF_k$ contains all of the degree-$k$ factors.
  Organize the degree-$k$ factors into a vector of $n$ polynomials
  $\mathbf{q} (\la) := \bigl( q_1(\la), q_2(\la), \dotsc, q_n(\la) \bigr)$
  in the same way as in the proof of Lemma~\ref{Stacking},
  i.e., so that $|\mathbf{q}(\la)|_{\scFk}$ is $1$-homogeneous.
  Define the degree gaps $g_i := d - \deg(q_i)$,
  and the corresponding gap vector $\mathbf{g} := (g_1,g_2,\dotsc,g_n)$.
  \parens{Note that some of the $g_i$ may be negative.}
  Then $P(\la)$ has a triangularization of degree $d$
  if and only if the majorization condition
  $\;\ell \cdot\mathbf{d}_{\scFell}(S) \,\succeq\, \mathbf{g}$ holds.
\end{coro}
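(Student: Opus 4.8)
The plan is to prove Corollary~\ref{cor.triang-characterization} by mirroring the proof of Proposition~\ref{prop.triang-characterization}, systematically replacing the family $\cF_1$ there by $\cF_\ell$ here; the divisibility hypotheses $\ell\mid k$ and $\ell\mid d$ are exactly what makes the degree bookkeeping go through, and the condition $\ell\cdot\mathbf{d}_{\scFell}(S)\succeq\mathbf{g}$ plays the role of $\mathbf{d}_{\scFo}(S)\succeq\mathbf{g}$. First I would record the easy arithmetic. Each $q_i(\la)$ is a product of degree-$k$ irreducibles, so $\deg q_i$ is a multiple of $k$, hence of $\ell$; together with $\ell\mid d$ this gives $\ell\mid g_i$ for every $i$, so $\mathbf{g}/\ell$ is a genuine integer vector. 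The Index Sum Theorem~\ref{thm.index-sum} in its strictly regular form gives $\sum_i\deg q_i+\ell\sum_i\mathbf{d}_{\scFell}(S)_i=dn$, whence $\sum_i\mathbf{d}_{\scFell}(S)_i=\tfrac1\ell\sum_i g_i$, so the two sides of the desired majorization always have equal component sums. I would also use that majorization is unchanged by multiplying both vectors by a positive constant, so $\ell\cdot\mathbf{d}_{\scFell}(S)\succeq\mathbf{g}$ is equivalent to $\mathbf{d}_{\scFell}(S)\succeq\mathbf{g}/\ell$, and the observation that a degree-$d$ triangularization of $P$ is a triangular $T(\la)$ unimodularly equivalent to $S(\la)$ whose $(i,i)$ entry has degree $k\,\mathbf{d}_{\scFk}(T)_i+\ell\,\mathbf{d}_{\scFell}(T)_i=d$ for all $i$ (only degrees $\ell$ and $k$ occur among the irreducible divisors).

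For the forward direction I would first establish the exact analog of Lemma~\ref{lem.triang-conversion}: if $P$ has a degree-$d$ triangularization, then it has one, $T(\la)$, in which $\mathbf{d}_{\scFk}(T)$ is $1$-homogeneous. The argument is verbatim that of Lemma~\ref{lem.triang-conversion}, with one bookkeeping change: after a compression of the degree-$k$ factors between two adjacent diagonal entries (shifting their degrees by $\pm k$), degree $d$ is restored by a compression of the degree-$\ell$ factors that moves $k/\ell$ of them, which is an integer because $\ell\mid k$ and a legitimate compression because the degree-$\ell$ count difference between the two entries is $(k/\ell)(\kappa_j-\kappa_i)\ge 2k/\ell$; the off-diagonal entries are then cleaned up by Lemma~\ref{LemOffDiagClean}. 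Given such a $T(\la)$, Lemma~\ref{lem.triang-majorizations} applied to the coprime partition $\cF_\ell\sqcup\cF_k$ yields $\mathbf{d}_{\scFell}(S)\succeq\mathbf{d}_{\scFell}(T)$. Since $\mathbf{d}_{\scFk}(T)$ and $|\mathbf{q}(\la)|_{\scFk}$ are both $1$-homogeneous with the same component sum (the total number of degree-$k$ factors), they coincide as multisets; as the $(i,i)$-degree of $T$ equals $d$, the multiset $\mathbf{d}_{\scFell}(T)$ therefore coincides with the multiset of the numbers $(d-\deg q_i)/\ell=g_i/\ell$, i.e.\ with $\mathbf{g}/\ell$ up to reordering. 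Because majorization is insensitive to reordering the smaller vector, $\mathbf{d}_{\scFell}(S)\succeq\mathbf{d}_{\scFell}(T)$ gives $\mathbf{d}_{\scFell}(S)\succeq\mathbf{g}/\ell$, equivalently $\ell\cdot\mathbf{d}_{\scFell}(S)\succeq\mathbf{g}$.

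For the converse, assume $\mathbf{d}_{\scFell}(S)\succeq\mathbf{g}/\ell$. Starting from $S(\la)$ I would first spread the degree-$k$ factors along the diagonal exactly as in the construction behind Lemma~\ref{Stacking}, using Corollary~\ref{cor.unimod-trans} and the Homogenization Lemma~\ref{lem.conversion-to-homogen-version}, to reach a triangular $\wt T(\la)\sim S(\la)$ with $\mathbf{d}_{\scFk}(\wt T)=|\mathbf{q}(\la)|_{\scFk}$; since the $2\times 2$ Unimodular Transfer Lemma~\ref{LemUniTransf} never disturbs the complementary family, the degree-$\ell$ content is merely permuted, so $\mathbf{d}_{\scFell}(\wt T)$ is a permutation of $\mathbf{d}_{\scFell}(S)$ and still majorizes $\mathbf{g}/\ell$. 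By the Muirhead theorem there is a finite sequence of interchanges and compressions taking $\mathbf{d}_{\scFell}(\wt T)$ to $\mathbf{g}/\ell$, chosen so that position $i$ ends up holding the value $g_i/\ell$ that pairs with the degree-$k$ count $\deg q_i/k$ already there; implemented by Corollary~\ref{cor.unimod-trans} this does not disturb the degree-$k$ counts and yields a triangular $T(\la)\sim S(\la)\sim P(\la)$ whose $(i,i)$ entry has degree $k(\deg q_i/k)+\ell(g_i/\ell)=d$, i.e.\ a degree-$d$ triangularization of $P(\la)$.

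The step needing the most care — and the main obstacle — is the coordination in the last paragraph: the degree-$\ell$ Muirhead conversion must be carried out using only adjacent interchanges that do not scramble the already $1$-homogeneous degree-$k$ counts at the positions being paired. Concretely, $|\mathbf{q}(\la)|_{\scFk}$ and the gap vector $\mathbf{g}$ split the diagonal positions into two blocks (those carrying the larger, and those carrying the smaller, degree-$k$ count), and what is needed is a partition of the multiset of entries of $\mathbf{d}_{\scFell}(S)$ into two sub-multisets of the corresponding sizes and corresponding sums, each of which is then compressed to a constant vector within its block. Existence of such a partition follows from the hypothesis $\mathbf{d}_{\scFell}(S)\succeq\mathbf{g}/\ell$ together with the standard fact that a constant vector is majorized by every vector of the same length and component sum; pinning this down carefully (and checking the scaled version of Lemma~\ref{lem.triang-conversion}) is essentially the only new work beyond transcribing the proof of Proposition~\ref{prop.triang-characterization}.
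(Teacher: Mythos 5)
Your proposal is correct and takes essentially the same route as the paper: the paper's own proof is just the two-line observation that, viewing $\ell$ as the basic unit of degree (so $k=\kappa\ell$, $d=\delta\ell$), the whole argument of Proposition~\ref{prop.triang-characterization} carries over with $1,k,d$ replaced by $1,\kappa,\delta$ — which is exactly the rescaling you verify in detail (integrality of $\mathbf{g}/\ell$, scale-invariance of majorization, and the $\ell$-adapted version of Lemma~\ref{lem.triang-conversion}). The coordination worry in your last paragraph is not a real obstacle, since interchanges and compressions performed with respect to $\cF_\ell$ leave the $\cF_k$ content of every diagonal entry untouched.
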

\begin{proof}
  The scenario described in this corollary can be viewed as a scaled version
  of the one handled in Proposition~\ref{prop.triang-characterization}.
  From the divisibility assumptions $\ell @\vert@ k$ and $\ell @\vert@ d$,
  let $k = \kappa \ell$ and $d = \delta \ell$, with $1 < \kappa < \delta$.
  Then by viewing $\ell$ as the basic ``unit'' of degree,
  the scenario of this corollary is just like
  that of Proposition~\ref{prop.triang-characterization}
  with $1$, $k$ and $d$ replaced by $1$, $\kappa$, and $\delta$.
\end{proof}


The results in Proposition~\ref{prop.triang-characterization}
and its Corollary~\ref{cor.triang-characterization}
show that the triangularization question is still somewhat tractable
when there are no more than two different degrees
among all of the irreducible divisors that are present.
However, when irreducible divisors have three or more degrees,
the picture gets much more involved,
with some significant combinatorial complexity now possible.
The strategy guiding Example~\ref{ex.2x2-triang} is still viable, though,
and sometimes is able to provide sufficient conditions
for guaranteeing that a triangularization is possible,
although these conditions may no longer be necessary.
To see why this is the case, we consider a few more examples,
this time with irreducible divisors of degrees 1, 2, and 3.

\begin{ejem}\label{ex.3x3-triang}
 For this example we aim for degree $d = 10$ and size $n=8$,
but this time with $m_3 = 9$ degree-3 factors,
$m_2 = 18$ degree-2 factors, and $m_1 = 17$ degree-1 factors,
for a total degree sum of $80$.
The sufficient condition is the same as before,
i.e., the gap vector must be majorized by the degree-1 factor-counting  vector,
but the gap vector is defined slightly differently
to how it was done in Example~\ref{ex.2x2-triang}.
To determine the gap vector,
begin by spreading out the degree-3 and degree-2 factors
in a way similar to Lemma~\ref{Stacking},
as pictured in the following diagram.
Note that we will definitely be able to do this
by unimodular transformations,
since both the degree-3 and the degree-2 factor-counting vectors
in the diagram are $1$-homogeneous,
and we have already seen that factor-counting vectors
can always be homogenized.

\begin{center}
\begin{tikzpicture}
\draw (-0.5,0) -- (-0.5,5);
\draw (-0.7,0) -- (-0.5,0);
\node at (-1,0) {0};
\draw (-0.7,0.5) -- (-0.5,0.5);
\node at (-1,0.5) {1};
\draw (-0.7,1) -- (-0.5,1);
\node at (-1,1) {2};
\draw (-0.7,1.5) -- (-0.5,1.5);
\node at (-1,1.5) {3};
\draw (-0.7,2) -- (-0.5,2);
\node at (-1,2) {4};
\draw (-0.7,2.5) -- (-0.5,2.5);
\node at (-1,2.5) {5};
\draw (-0.7,3) -- (-0.5,3);
\node at (-1,3) {6};
\draw (-0.7,3.5) -- (-0.5,3.5);
\node at (-1,3.5) {7};
\draw (-0.7,4) -- (-0.5,4);
\node at (-1,4) {8};
\draw (-0.7,4.5) -- (-0.5,4.5);
\node at (-1,4.5) {9};
\draw (-0.7,5) -- (-0.5,5);
\node at (-1,5) {10};

\fill[magenta] (0,0) rectangle (16,1.5);
\fill[magenta] (14,1.5) rectangle (16,3);
\fill[cyan] (0,1.5) rectangle (14,3.5);
\fill[cyan] (10,3.5) rectangle (16,4.5);
\fill[cyan] (14,3) rectangle (16,5);

\draw (0,0) -- (16,0);
\draw (0,1.5) -- (16,1.5);
\draw (0,2.5) -- (14,2.5);
\draw (14,3) -- (16,3);
\draw (0,3.5) -- (14,3.5);
\draw (14,4) -- (16,4);
\draw (10,4.5) -- (14,4.5);
\draw[thick,dashed,red] (0,5) -- (14,5);
\draw[thick,red] (14,5) -- (16,5);

\draw (0,0) -- (0,3.5);
\draw[dashed] (0,3.5) -- (0,5);
\draw (2,0) -- (2,3.5);
\draw[dashed] (2,3.5) -- (2,5);
\draw (4,0) -- (4,3.5);
\draw[dashed] (4,3.5) -- (4,5);
\draw (6,0) -- (6,3.5);
\draw[dashed] (6,3.5) -- (6,5);
\draw (8,0) -- (8,3.5);
\draw[dashed] (8,3.5) -- (8,5);
\draw (10,0) -- (10,4.5);
\draw[dashed] (10,4.5) -- (10,5);
\draw (12,0) -- (12,4.5);
\draw[dashed] (12,4.5) -- (12,5);
\draw (14,0) -- (14,5);
\draw (16,0) -- (16,5);
\end{tikzpicture}
\end{center}
The gap vector for this configuration is $\mathbf{g} = (3, 3, 3, 3, 3, 1, 1, 0)$,
so in order for a triangularization
(with this particular degree-$3$ and degree-$2$ configuration)
to be guaranteed to exist,
this vector must be majorized by the degree-1 factor-counting  vector
for the diagonal of the Smith form.
 For instance, if the degree-1 factor-counting vector in the Smith form
is $\mathbf{f}_1 = (0, 0, 0, 0, 2, 5, 5, 5)$, 
then there is a triangularization, since $\mathbf{f}_1 \succeq \mathbf{g}$.
On the other hand if the Smith form's degree-1 factor-counting  vector
is $\wt{\mathbf{f}}_1 = (1, 1, 1, 2, 3, 3, 3, 3)$, 
then a triangularization may still exist,
but it cannot be guaranteed to exist by this pathway
since $\wt{\mathbf{f}}_1 \not\succeq \mathbf{g}$.

However, if we modify the layout of the degree-2 factors just a little bit,
then we can see that the degree-1 factor-counting  vector $\wt{\mathbf{f}}_1$
will admit a triangularization.
Let us shift one (cyan) height-2 block from the eighth column
to the fifth column, as in the diagram.
\begin{center}
\begin{tikzpicture}
\draw (-0.5,0) -- (-0.5,5);
\draw (-0.7,0) -- (-0.5,0);
\node at (-1,0) {0};
\draw (-0.7,0.5) -- (-0.5,0.5);
\node at (-1,0.5) {1};
\draw (-0.7,1) -- (-0.5,1);
\node at (-1,1) {2};
\draw (-0.7,1.5) -- (-0.5,1.5);
\node at (-1,1.5) {3};
\draw (-0.7,2) -- (-0.5,2);
\node at (-1,2) {4};
\draw (-0.7,2.5) -- (-0.5,2.5);
\node at (-1,2.5) {5};
\draw (-0.7,3) -- (-0.5,3);
\node at (-1,3) {6};
\draw (-0.7,3.5) -- (-0.5,3.5);
\node at (-1,3.5) {7};
\draw (-0.7,4) -- (-0.5,4);
\node at (-1,4) {8};
\draw (-0.7,4.5) -- (-0.5,4.5);
\node at (-1,4.5) {9};
\draw (-0.7,5) -- (-0.5,5);
\node at (-1,5) {10};

\fill[magenta] (0,0) rectangle (16,1.5);
\fill[magenta] (14,1.5) rectangle (16,3);
\fill[cyan] (0,1.5) rectangle (14,3.5);
\fill[cyan] (8,3.5) rectangle (14,4.5);
\fill[cyan] (14,3) rectangle (16,4);

\draw (0,0) -- (16,0);
\draw (0,1.5) -- (16,1.5);
\draw (0,2.5) -- (14,2.5);
\draw (14,3) -- (16,3);
\draw (0,3.5) -- (14,3.5);
\draw (14,4) -- (16,4);
\draw (8,4.5) -- (14,4.5);
\draw[thick,dashed,red] (0,5) -- (16,5);

\draw (0,0) -- (0,3.5);
\draw[dashed] (0,3.5) -- (0,5);
\draw (2,0) -- (2,3.5);
\draw[dashed] (2,3.5) -- (2,5);
\draw (4,0) -- (4,3.5);
\draw[dashed] (4,3.5) -- (4,5);
\draw (6,0) -- (6,3.5);
\draw[dashed] (6,3.5) -- (6,5);
\draw (8,0) -- (8,4.5);
\draw[dashed] (8,4.5) -- (8,5);
\draw (10,0) -- (10,4.5);
\draw[dashed] (10,4.5) -- (10,5);
\draw (12,0) -- (12,4.5);
\draw[dashed] (12,4.5) -- (12,5);
\draw (14,0) -- (14,4.5);
\draw[dashed] (14,4.5) -- (14,5);
\draw (16,0) -- (16,4);
\draw[dashed] (16,4) -- (16,5);
\end{tikzpicture}
\end{center}
But is this new configuration actually reachable by unimodular transformations?
Now we have a new condition;
the degree-2 factor-counting vector in the Smith form
must majorize $(2, 2, 2, 2, 3, 3, 3, 1)$, 
the degree-2 factor-counting vector in this new configuration.
Assuming that this new condition is satisfied,
we still need the degree-1 factor-counting vector
to majorize the new gap vector
$\wt{\mathbf{g}} = (3, 3, 3, 3, 1, 1, 1, 2)$.
 For the particular degree-1 factor-counting vector $\wt{\mathbf{f}}_1$
that failed before, though, everything is now fine,
since $\wt{\mathbf{f}}_1$ and $\wt{\mathbf{g}}$
are just permutations of each other.

So in order to guarantee the existence of a triangularization
using this new configuration of degree-3 and degree-2 factors,
in general we will need \emph{two} majorization conditions to be satisfied.
One can now easily imagine the combinatorial nightmare
that will almost certainly accompany any effort to devise general conditions
that are necessary for triangularization in the arbitrary field setting.
This is why we have contented ourselves with only a brief discussion
of simple sufficient conditions for triangularizability
when there are at least three degrees of irreducible divisor present.
We leave the investigation of necessary conditions
for triangularizability for further research.
\end{ejem}

This brings us now to our final result,
which gives a generalized sufficient condition
for guaranteeing the existence of a triangularization over an arbitrary field.

\begin{prop} \label{prop.triang}
  Let $P(\la)$ be a strictly regular $\nbyn$ polynomial matrix
  of degree $d$ over a field $\,\bF$.
  Let $S(\la)$ be the Smith form of $P(\la)$,
  and assume that all irreducible divisors are degree $k$ or less.
  Organize the degree-$2$ through degree-$k$ factors
  into $n$ polynomials $q_1(\la), q_2(\la), \dotsc, q_n(\la)$
  in the same way as in the proof of Lemma~\ref{Stacking},
  and define the degree gaps $g_i := d - \deg(q_i)$.
  \parens{Note that some of the $g_i$ may be negative.}
  If the degree-1 factor-counting vector for the diagonal of $S(\la)$
  majorizes the gap vector $\mathbf{g} := (g_1,g_2,\dotsc,g_n)$,
  then $P(\la)$ has a triangularization of degree $d$.
\end{prop}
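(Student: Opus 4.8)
The plan is to follow the strategy of the proof of Proposition~\ref{prop.triang-characterization}, and of Examples~\ref{ex.2x2-triang}--\ref{ex.3x3-triang}: first spread \emph{all} of the higher-degree irreducible factors along the diagonal into the $1$-homogeneous pattern of Lemma~\ref{Stacking}, and then move the degree-$1$ factors around so as to fill the resulting degree gaps up to exactly $d$ in every diagonal position. Everything is done by triangularity-preserving unimodular equivalences, so the Smith form of $P(\la)$ --- i.e.\ its finite spectral structure --- is preserved throughout, and strict regularity of the final triangularization is recovered at the end by an Index Sum Theorem argument, exactly as at the close of the proof of Theorem~\ref{thm.QTR-strictly-regular}.

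First I would feed the multiset of all degree-$2$ through degree-$k$ irreducible factors of $S(\la)$ into Lemma~\ref{Stacking}, obtaining the polynomial vector $\mathbf{q}(\la) = (q_1(\la),\dots,q_n(\la))$ of the statement, each of whose layer-counting vectors $|\mathbf{q}(\la)|_{\scFj}$ for $2 \le j \le k$ is $1$-homogeneous and hence is a homogenization of $\mathbf{d}_{\scFj}(S)$. Starting from $S(\la)$ and processing the pieces $\cF_2,\dots,\cF_k$ one at a time (in any order) --- for each $\cF_j$ using the Homogenization Lemma~\ref{lem.conversion-to-homogen-version} to route $\mathbf{d}_{\scFj}(S)$ to $|\mathbf{q}(\la)|_{\scFj}$ by interchanges and compressions, and Corollary~\ref{cor.unimod-trans} to realize each of these moves by a unimodular equivalence --- I arrive at an upper triangular $\wh{T}(\la)$, unimodularly equivalent to $S(\la)$, with $\mathbf{d}_{\scFj}(\wh{T}) = |\mathbf{q}(\la)|_{\scFj}$ for all $j \ge 2$. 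The point that makes this work layer by layer is that a Corollary~\ref{cor.unimod-trans} move carried out with $\cF = \cF_j$ shuffles only the degree-$j$ factors, pinning every factor of every other degree to its diagonal slot; in particular the degree-$1$ factors are never touched, so $\mathbf{d}_{\scFo}(\wh{T}) = \mathbf{d}_{\scFo}(S)$, and the ``degree-$\ge 2$ part'' of the $i$-th diagonal entry of $\wh{T}$ has degree $\sum_{j=2}^{k} j\,|\mathbf{q}(\la)|_{\scFj,i} = \deg q_i = d - g_i$. (This is just the proof of Corollary~\ref{cor.un-diagonalization} restricted to the pieces $\cF_2,\dots,\cF_k$.)

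Next I would redistribute only the degree-$1$ factors, using Corollary~\ref{cor.unimod-trans} with $\cF = \cF_1$ on adjacent diagonal entries; such a move replaces the degree-$1$ counts $(m,n)$ of the two entries by any $(\alpha,\beta)$ with $\alpha+\beta=m+n$ and $\min\{m,n\}\le\alpha,\beta\le\max\{m,n\}$, and leaves every degree-$\ge 2$ factor untouched. These are precisely the adjacent Robin Hood transfers (the boundary case being an adjacent transposition of counts), so the classical Muirhead theorem --- together with the standard device of simulating a non-adjacent transfer by conjugating with adjacent transpositions --- shows that the degree-$1$ count vector can be driven from $\mathbf{d}_{\scFo}(\wh{T}) = \mathbf{d}_{\scFo}(S)$ to any natural vector that it majorizes. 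By hypothesis $\mathbf{d}_{\scFo}(S)\succeq\mathbf{g}$ (and, by the dispersion property of majorization noted in Remark~\ref{rem.spread-out}, this already forces every $g_i \ge 0$, so $\mathbf{g}$ is a legitimate target), so I route the degree-$1$ counts to $\mathbf{g}$ exactly. Implementing this gives a regular upper triangular $T_0(\la)$, unimodularly equivalent to $S(\la)$, with $\mathbf{d}_{\scFj}(T_0) = |\mathbf{q}(\la)|_{\scFj}$ for $j\ge2$ and $\mathbf{d}_{\scFo}(T_0)=\mathbf{g}$, so that every diagonal entry of $T_0$ has degree $\deg q_i + g_i = d$. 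Finally, since each $1\times1$ diagonal entry is strictly regular, Lemma~\ref{LemOffDiagClean} (all diagonal blocks $1\times1$) produces a unimodularly equivalent triangular $T(\la)$ with the same diagonal and all off-diagonal entries of degree $<d$; thus $\deg T = d$ and $T(\la)$ has the Smith form of $P(\la)$, and taking $\grade T = d$ the Index Sum Theorem~\ref{thm.index-sum} forces all of its partial multiplicities at $\infty$ to vanish, i.e.\ $T(\la)$ is strictly regular. Hence $T(\la)$ is the desired degree-$d$ triangularization of $P(\la)$.

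The one genuinely delicate point is the bookkeeping behind the non-interference claims: that the layered spreading of the degree-$\ge2$ factors can be performed simultaneously for all of $\cF_2,\dots,\cF_k$, and that neither that spreading nor the final degree-$1$ redistribution ever disturbs factors of a different degree class --- so that the $i$-th diagonal entry genuinely ends up with degree $\deg q_i + (\mathbf{d}_{\scFo})_i$, the first summand frozen and the second equal to $g_i$. This rests entirely on the precise form of Corollary~\ref{cor.unimod-trans}, which leaves the $\cG$-part of each entry undisturbed; beyond that, the argument is an assembly of Lemma~\ref{Stacking}, the Homogenization Lemma, Corollary~\ref{cor.unimod-trans}, the Muirhead theorem, Lemma~\ref{LemOffDiagClean}, and the Index Sum Theorem.
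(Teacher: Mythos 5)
Your proposal is correct and follows essentially the same route as the paper's proof: spread the degree-$2$ through degree-$k$ factors into the $1$-homogeneous target of Lemma~\ref{Stacking} via Lemma~\ref{lem.conversion-to-homogen-version} and Corollary~\ref{cor.unimod-trans}, then use the majorization hypothesis (via Muirhead-type adjacent transfers) to route the degree-$1$ counts to the gap vector $\mathbf{g}$. Your write-up is in fact somewhat more careful than the paper's, in that it makes explicit the non-interference of the layer-by-layer moves, the final off-diagonal degree reduction via Lemma~\ref{LemOffDiagClean}, and the strict-regularity check via the Index Sum Theorem.
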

\begin{proof}
It is possible to employ the techniques
pictured in Examples \ref{ex.2x2-triang} and \ref{ex.3x3-triang}
(i.e., following the pattern of the proof of Lemma~\ref{Stacking}
in distributing all irreducible factors of degree two and higher,
and then filling in the rest of the available spaces
with all of the remaining degree-$1$ factors)
to design a target diagonal in which all of the entries have degree $d$.
Note that the degree-$\ell$ factor-counting vectors for $2 \le \ell \le k$
in this target diagonal are all $1$-homogeneous,
and thus are definitely all realizable by spreading out
the irreducible divisors in the Smith form
using Corollary~\ref{cor.unimod-trans}
and Lemma~\ref{lem.conversion-to-homogen-version}.
The majorization hypothesis
about the degree-$1$ factor-counting vector for the diagonal of $S(\la)$
then suffices to imply
that the degree-$1$ factor-counting vector for the target diagonal,
i.e., the gap vector $\mathbf{g}$,
can also be realized using Corollary~\ref{cor.unimod-trans}.
Once all of these factor-counting vectors for the target diagonal are realized,
we will have attained the desired degree-$d$ triangularization for $P(\la)$.
\end{proof}

In the statement of Proposition~\ref{prop.triang} it was noted
that under the given conditions,
it is possible for the gap vector $\mathbf{g}$ to have negative entries.
Whenever this occurs,
then it is impossible for any conceivable degree-1 factor-counting vector
for the Smith form
to majorize $\mathbf{g}$, since all entries of a factor-counting vector are non-negative.
In this scenario, then, Proposition~\ref{prop.triang}
tells us nothing about the existence or non-existence of a triangularization.
Other arrangements of the higher degree irreducible factors along the diagonal
may still lead to a triangularization,
as illustrated in Example~\ref{ex.3x3-triang}.

\begin{remark}
\rm
  Note that the condition in Proposition~\ref{prop.triang}
  for ensuring triangularizability
  can be adapted to regular matrix polynomials $P(\la)$
  having nontrivial infinite spectral structure.
  First apply a M\"{o}bius transformation
  to transform $P(\la)$ into a matrix polynomial $Q(\la)$
  with only finite spectral structure,
  i.e., into a strictly regular matrix polynomial,
  as was done in the proof of Theorem~\ref{thm.QTR-regular}.
  Since any M\"obius transformation preserves the degree
  of any irreducible divisor of degree two or higher by Lemma~\ref{lem.degree-preservation},
  all of the degree-$\ell$ factor-counting vectors of $Q(\la)$
  will be exactly the same as those of $P(\la)$,
  except for $\ell = 1$.
  The partial multiplicities at $\infty$ for $P(\la)$
  will turn into partial multiplicities for $Q(\la)$
  at some degree-1 irreducible $\la - \omega$,
  hence the degree-$1$ factor-counting vector
  for the Smith form of $Q(\la)$
  will be equal to the sum of the degree-$1$ factor-counting vector
  for the Smith form of $P(\la)$
  together with the partial multiplicity sequence for $P(\la)$ at $\infty$.
  In other words, the infinite partial multiplicities
  effectively get included with all of the degree-1 irreducible factors.
  We can now apply the majorization condition in Proposition~\ref{prop.triang}
  (or Proposition~\ref{prop.triang-characterization})
  to determine if a triangularization for $Q(\la)$ is guaranteed.
  If it is, then the inverse M\"{o}bius transformation
  applied to this triangularization for $Q(\la)$
  provides a spectrally equivalent triangularization for $P(\la)$.
  Note that this generalization to all regular matrix polynomials
  appears in \cite{TasTisZab, TisZab} for real matrix polynomials.
\end{remark}

\section{Conclusion} \label{sect.conclusion}

This work has shown that any regular matrix polynomial $P(\la)$
over an arbitrary field $\bF$
is spectrally equivalent
to a $k$-quasi-triangular matrix polynomial over $\bF$
of the same size and degree,
where $k$ is the largest degree
among all of the irreducible divisors of $P(\la)$.
This extends and generalizes the earlier work in \cite{TasTisZab},
which found triangularizations and $2$-quasi-triangularizations
for regular matrix polynomials over algebraically closed fields
and the real field $\bR$.
We have also shown that for any field $\bF$,
this $k$ is the best possible general bound on the diagonal block sizes of quasi-triangularizations
that holds for all regular matrix polynomials over $\bF$.

Several new tools and results were developed
in order to achieve this extension to arbitrary fields.
Among these are:
\begin{itemize}
  \item a technique to allow the flexible but controlled movement of individual irreducible factors
        up and down the diagonal of a triangular polynomial matrix via 
        unimodular transformations, 
  \item 
        a new homogeneous partitioning property
        of ``tightly packed'' integer multisets,
  \item 
        a reformulation of the interaction of M\"obius transformations with spectral data,
        in a way that makes it easier to work with higher degree irreducible divisors.
\end{itemize}


A number of issues remain to be settled,
especially ones related to the size of diagonal blocks
in quasi-triangularizations.
Although we know that these block sizes need never be any larger than $k$,
and that sometimes they are all forced to be of size exactly $k$,
very often quasi-triangularizations
can be found with diagonal block sizes much smaller than the upper bound $k$.
 For given spectral data, can one predict how small
the diagonal blocks can be made in a quasi-triangularization,
and indeed when these blocks can all be made $1 \times 1$,
i.e., when can we actually triangularize?
Some limited results were given along these lines,
but much about this question still remains open.



\end{document}